\newcommand{\cc}{\mathbb{C}}
\newcommand{\pp}{\mathbb{P}}
\newcommand{\I}{\mathcal{I}}
\renewcommand{\O}{\mathcal{O}}
\newcommand{\F}{\mathcal{F}}
\newcommand{\res}{\operatorname{res}}
\renewcommand{\Im}{\operatorname{Im}}
\newcommand{\rk}{\operatorname{rk}}
\newcommand{\ev}{\operatorname{ev}}
\newcommand{\genus}{\operatorname{genus}}
\renewcommand{\bar}{\overline}
\newtheorem{thm}{Theorem}[section]
\newtheorem{ithm}{Theorem}
\newtheorem{icor}[ithm]{Corollary}
\newtheorem{lem}[thm]{Lemma}
\newtheorem{prop}[thm]{Proposition}
\newtheorem{cor}[thm]{Corollary}
\newcommand{\defi}[1]{\textsf{#1}} 
\theoremstyle{definition}
\newtheorem{defin}[thm]{Definition}
\theoremstyle{remark}
\newtheorem{rem}{Remark}
\begin{document}

\title[Interpolation for curves in $\pp^4$]{Interpolation for Brill-Noether curves in $\pp^4$}
\author{Eric Larson}
\address{Department of Mathematics, Stanford University, Stanford, CA 94305}
\email{elarson3@gmail.com}
\author{Isabel Vogt}
\address{Department of Mathematics, Massachusetts Institute of Technology, Cambridge, MA 02139}
\email{ivogt@mit.edu}
\date{\today}

\maketitle

\begin{abstract}
In this paper, we compute the number of general points through which a general Brill--Noether curve in $\pp^4$ passes.
We also prove an analogous theorem when some points
are constrained to lie in a transverse hyperplane.  As explained in \cite{over}, these results play an essential role in the first author's proof of the Maximal Rank Conjecture \cite{mrc}.
\end{abstract}

\section{Introduction}

In this paper we deal with the fundamental incidence question: through how many general points can one pass a curve of degree $d$ and genus $g$ in $\pp^r$?  The natural conjecture would be that each point imposes independent conditions, and therefore the maximal number of general points should be the dimension of the scheme parameterizing such curves divided by $r-1$.
We focus here on curves of general moduli, where $\rho(d,g,r) = (r+1)d - rg - r(r+1) \geq 0$ and there is a unique component $\bar{M}_g(\pp^r, d)^\circ$ of the Kontsevich space dominating $\bar{M}_g$,
which is of dimension
\[(r + 1) d - (r - 3)(g - 1).\]
Curves of this component are therefore expected to pass through
\[f(d, g, r) := \left\lfloor \frac{(r + 1)d - (r - 3)(g - 1)}{r - 1}\right\rfloor\]
general points.
We call curves parameterized by this component \defi{Brill--Noether curves} (BN-curves).  Previous work has settled the cases where $d \geq g + r$ \cite{joint}, and where $r=3$ \cite{vogt}.

One can also consider the intersection of a general Brill--Noether curve
of degree $d$ and genus $g$ with a transverse hypersurface $S$ of degree $n$ in $\pp^r$.
For exactly five values of $(r, n)$ --- namely, $(r, n) \in \{(2,1),(2,2),(3, 1), (3, 2), (4, 1)\}$ --- this
intersection is a general set of $dn$ points on $S$
for all but finitely many $(d, g)$.
In these three cases, it is therefore natural to ask a stronger question: can
one pass a general Brill--Noether curve of degree $d$ and genus $g$
through $f(d, g, r)$ points which are general subject to the constraint that
$dn$ of them lie on a transverse hypersurface of degree $n$?
This question is trivial for $(r,n) \in \{(2,1), (2,2)\}$, and work of \cite{vogt} and \cite{quadrics} answer this stronger question
for $(r, n) = (3, 1)$ and $(r, n) = (3, 2)$,
so the only remaining case is $(r, n) = (4, 1)$.

We answer these questions completely in the case $r=4$
(respectively $(r, n) = (4, 1)$).  Along with results of \cite{vogt, quadrics} discussed above, these form base cases for inductive arguments used to prove the Maximal Rank Conjecture, as outlined in \cite{over}.

As in previous work, we deduce these answers by studying the normal bundle $N_C$ (respectively the twisted normal bundle $N_C(-1)$) for $C$ a general Brill--Noether curve.  We say that a vector bundle $E$ on a curve $C$ \defi{satisfies interpolation} if it is nonspecial ($h^1(E) =0$) and for a general effective divisor $D$ of any degree, either 
\[ h^0(E(-D)) = 0\text{, or } h^1(E(-D))=0. \]
The main result is then:

\begin{ithm}\label{main}
Let $C$ be a general Brill--Noether curve of degree $d$ and genus $g$ in $\pp^4$.  Then $N_C(-1)$ satisfies interpolation if and only if
\[(d,g) \notin \{(6,2), (8,5), (9,6), (10,7)\}.\]

Furthermore, if $D$ is an effective divisor of degree $d - 1$
supported in a general hyperplane section, then $N_C(-D)$ satisfies interpolation
if and only if
\[(d, g) \neq (6, 2).\]
\end{ithm}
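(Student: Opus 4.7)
The plan is to prove both statements by induction on $d$ using specialization to reducible nodal curves, in the style of the earlier cases treated in \cite{joint,vogt,quadrics}. For the exceptional pairs $(d,g) \in \{(6,2),(8,5),(9,6),(10,7)\}$ (respectively $(6,2)$ for the second statement), I would first exhibit an effective divisor $E$ witnessing the failure, namely with both $h^0(N_C(-1-E)) > 0$ and $h^1(N_C(-1-E)) > 0$ (respectively for $N_C(-D)$). Since $\chi(N_C(-1)) = 2d-g+1 > 0$ in each case, the obstruction must be geometric: the curve lies on a special surface (a low-rank quadric, or a cubic scroll, say) which produces an abnormal sub-line-bundle of $N_C$ whose cohomology is too large to allow cancellation. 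For the positive direction, one assembles a list of base cases for small $(d,g)$ with $\rho(d,g,4) \geq 0$ via explicit descriptions of $C$—as sections of rational normal scrolls, as complete intersections on del Pezzo surfaces, or via linkage—allowing direct cohomological computation.

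\textbf{Inductive step via degeneration.}
For the induction, I would specialize a general BN curve $C$ of degree $d$ and genus $g$ to a nodal union $C_0 \cup L$, where $L$ is a line meeting $C_0$ transversely at $k \in \{1,2\}$ general points, lowering the invariants to $(d-1, g-k+1)$; smoothability into the BN component must be verified using the known limits of BN curves. The normal bundle then fits in the restriction sequence
\[ 0 \to N_{C_0 \cup L}|_{C_0} \to N_{C_0 \cup L} \to N_{C_0 \cup L}|_L \to 0, \]
together with the modification sequence $0 \to N_{C_0} \to N_{C_0 \cup L}|_{C_0} \to \bigoplus_{i=1}^k T_{p_i}L \to 0$ expressing $N_{C_0 \cup L}|_{C_0}$ as an elementary positive modification of $N_{C_0}$. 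Since $N_{C_0 \cup L}|_L$ is a vector bundle on $L \cong \pp^1$ and therefore splits, twisting by $\O(-1)$ reduces interpolation for $N_C(-1)$ to interpolation for a modification of $N_{C_0}(-1)$—supplied by the inductive hypothesis—plus a combinatorial check on the splitting type of the restriction to $L$.

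\textbf{Main obstacles.}
The most delicate point will be \emph{avoiding the exceptional pairs inside the induction}: whenever $(d-1,g)$ or $(d-1,g-1)$ lands in the excluded list, the naive degeneration fails and must be replaced with an alternative specialization—attaching a conic instead of a line, or using a multi-step degeneration that breaks $C$ into three components. Identifying when such detours are needed and verifying the modified exact sequences is the central bookkeeping task. For the hyperplane part of the theorem, an additional compatibility is required: the divisor $D$ must specialize to an effective divisor still supported in a general hyperplane section of the limiting curve. The natural choice is to take $L$ itself inside the transverse hyperplane, so that some of the $d-1$ constrained points specialize onto $L$, reducing to interpolation for $N_{C_0}(-D_0)$ with $D_0$ of appropriate degree supported in a general hyperplane section of $C_0$. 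Ensuring that $D_0$ remains truly \emph{general} in this hyperplane section after specialization is the most subtle compatibility to verify.
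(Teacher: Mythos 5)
There is a genuine gap at the heart of your inductive step. Attaching a line meeting $C_0$ at $k \in \{1,2\}$ general points changes $(d,g)$ to $(d+1, g+k-1)$, hence changes $\rho(d,g,4) = 5d - 4g - 20$ by $+5$ or $+1$: your induction can only increase $\rho$. But the whole difficulty of the theorem is the special range $d < g + 4$, i.e.\ small $\rho$. Every BN-pair with $\rho(d,g,4) = 0$ has no admissible predecessor under your moves (the candidate predecessors $(d-1,g)$ and $(d-1,g-1)$ have $\rho = -5$ and $-1$, so the inductive hypothesis does not apply), and there are infinitely many such pairs, so they would all have to be base cases; a finite list of explicit base cases built from scrolls, del Pezzos, or linkage cannot cover them. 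The paper flags exactly this obstruction (``degenerating to the union of a curve and a $1$- or $2$-secant line alone cannot hope to apply to special curves where $d < g + r$'') and uses a different engine: Lemma~\ref{in_transverse} glues a curve $X$ of degree $d''$ and genus $2s+1-d''$ lying in a hyperplane $H \simeq \pp^3$ to a transverse curve $Y$ along $s$ points general in $H$, which changes $\rho$ by $9d'' - 12s$; with $(d'',s) = (9,7)$ this \emph{decreases} $\rho$ by $3$ per step, and interpolation for $N_{X/H}(-1)$ is supplied by the $\pp^3$ theorem of \cite{vogt}. This reduces everything to the finite list $\{(9,5),(11,8),(12,10),(13,10),(13,11),(14,12)\}$, which is then dispatched by two further bespoke degenerations --- a canonical curve of degree $8$ and genus $5$ with three $2$-secant lines, exploiting $N_D(-1) \simeq K_D^{\oplus 3}$ (Lemma~\ref{canonical_degeneration}), and a degenerate rational normal quartic meeting the curve in six points (Lemma~\ref{3_secant}) --- together with computer-verified interpolation for certain modified normal bundles of nonspecial curves. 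None of this is a ``combinatorial check on the splitting type'' of the restriction to $L$; the restrictions to the attached lines must be made manifestly negative or trivial by careful choices of twists and modifications before sections can be glued.

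A second, related gap concerns the divisor statement. For $(d,g) \in \{(8,5),(9,6),(10,7)\}$ the bundle $N_C(-1)$ fails interpolation, yet the theorem asserts that $N_C(-D)$ still satisfies it; since your treatment of the hyperplane part rides on the same line-attachment induction for $N_C(-1)$-type bundles, it would break precisely at these pairs. The paper handles them by separate arguments: for $(8,5)$ it uses $\chi(N_C(-D)) \geq \genus(C)\cdot\rk(N_C)$ to reduce to interpolation for $N_C$ of the canonical curve, and for $(9,6)$ and $(10,7)$ it writes $\O_C(D) = \O_C(1)(-p)$ with $p$ general, so that the needed vanishing reduces (via Lemma~\ref{check_one} and generality of the resulting line bundle) to interpolation for the untwisted $N_C$, i.e.\ to Theorem~\ref{notwist} --- which itself requires the rational quartic degeneration and the computer checks. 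Your proposal would need an independent mechanism for these three pairs.
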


As a corollary, we deduce the answer to the second
stronger question posed above when $(r, n) = (4, 1)$:

\begin{icor}\label{cor_twist}
A general Brill--Noether curve of degree $d$ and genus $g$ in $\pp^4$
passes through a set of $f(d, g, 4)$ points which are general subject to
the constraint that $d$ of them lie on a transverse hyperplane,
if and only if
\[(d,g) \notin \{(8,5), (9,6), (10,7)\}.\]
The geometry of these three cases is as follows:
\begin{itemize}
\item 
In these three cases, the statement fails because
the section by a transverse hyperplane
of the general
curve is not a general set of $d$ points in the hyperplane;
instead it is a set of $d$ points which are general subject
to the condition that they are distinct points
which lie on a complete intersection of $11 - d$ quadrics.

\item 
Nevertheless, for $(d, g) \in \{(8, 5), (10, 7)\}$ --- but not for $(d, g) = (9, 6)$ --- a general
Brill--Noether curve of this degree and genus
still passes through $f(d, g, 4) + 1$ points
which are general subject to the constraint that $d$ of them lie
on a transverse hyperplane and are distinct points which lie
on a complete intersection of $11 - d$ quadrics.

\item
This fails for $(d, g) = (9, 6)$ because a general Brill--Noether curve of this degree and genus
lies on a quartic del Pezzo surface in $\pp^4$. However, such 
a curve
still passes through $f(d, g, 4)$ points
which are general subject to either the constraint that $d$ of them lie
on a transverse hyperplane and are distinct points
which lie on a complete intersection of $11 - d$ quadrics,
or the constraint that $d - 1$ of them lie on a transverse hyperplane.
\end{itemize}
\end{icor}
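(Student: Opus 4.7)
The plan is to deduce the corollary from Theorem~\ref{main} by translating interpolation for twisted normal bundles into incidence statements, then analyzing the exceptional geometry in the remaining cases.

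The standard setup: first-order deformations of $C \subset \pp^4$ preserving a fixed hyperplane section pointwise and passing through extra general points $p_1, \ldots, p_{f - d}$ are classified by $H^0(N_C(-1)(-\sum p_i))$, so dominance of the evaluation map $M_g(\pp^4, d)^\circ \to \Hilb_d(H) \times (\pp^4)^{f - d}$ reduces to the vanishing $h^1(N_C(-1)(-\sum p_i)) = 0$ for general $p_i$. When $(d,g) \notin \{(6,2), (8,5), (9,6), (10,7)\}$, Theorem~\ref{main} gives full interpolation for $N_C(-1)$, which implies this vanishing; since the hyperplane section of a general BN-curve is a general $d$-tuple in $H$ in these cases, this proves the ``if'' direction. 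For $(6,2)$, which is not a failure of the corollary, one only needs the weaker statement $h^1(N_C(-1)(-\sum p_i)) = 0$ for $f - d = 3$ general points; I would verify this directly via a cohomological computation, for example by specialization to an explicit BN-curve of degree $6$ and genus $2$, noting that the failure of full interpolation necessarily occurs at a larger twist (where the relevant constraint is $h^0 = 0$) and so does not obstruct dominance at the specific degree we need.

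For the three cases $(8,5), (9,6), (10,7)$, a Riemann--Roch computation with $\O_C(2)$ gives $h^0(\I_C(2)) = 15 - (2d - g + 1) = 11 - d$ whenever $g = d - 3$. This identifies the containing structure: for $(8,5)$ the canonical embedding of a general genus $5$ curve, cut out by $3$ quadrics by Petri; for $(9,6)$ a curve on the smooth quartic del Pezzo surface cut out by $2$ quadrics in $\pp^4$; and for $(10,7)$ a curve on a unique quadric threefold. Restricting to a general hyperplane $H$ forces $C \cap H$ to lie on the complete intersection of $11 - d$ quadrics in $H \cong \pp^3$, which is a proper closed subvariety of $\Hilb_d(H)$; this obstructs the evaluation map from dominating onto general $d$-tuples and proves the ``only if'' direction.

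For the refined bullets, the ``furthermore'' of Theorem~\ref{main} gives interpolation of $N_C(-D)$ for $|D| = d - 1$ on a hyperplane section in all three cases, immediately yielding the ``$d - 1$ on hyperplane'' statement for $(9,6)$. For the $f + 1$-point statement in $(8,5)$ and $(10,7)$, I would analyze the normal bundle sequence
\[0 \to N_{C/Y} \to N_C \to N_{Y/\pp^4}|_C \to 0,\]
where $Y$ is the containing complete intersection ($Y = C$ for $(8,5)$ or a quadric threefold for $(10,7)$), using the extra freedom from deformations of $Y$ and the movement of the hyperplane section within the complete intersection to accommodate one additional general point. The main obstacle will be the parallel analysis for $(9, 6)$: one must show the gain suffices only for $f$ and not $f + 1$, reflecting that the moduli of quartic del Pezzo surfaces in $\pp^4$ has one fewer dimension than needed. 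This requires carefully comparing, in all three cases, the codimension of the ``CI of $11 - d$ quadrics'' locus inside $\Hilb_d(H)$ against the rank $r - 1 = 3$ of the normal bundle, while simultaneously controlling the cohomology of the relevant restricted bundles on $C$.
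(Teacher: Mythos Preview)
Your treatment of the main ``if and only if'' claim is essentially correct and follows the paper's line: interpolation for $N_C(-1)$ from Theorem~\ref{main} gives dominance of the evaluation map, and your Riemann--Roch observation that $h^0(\I_C(2)) = 11 - d$ forces the hyperplane section to lie on too many quadrics in $H$, obstructing generality. Two refinements are needed even here. First, your Riemann--Roch only shows the hyperplane section \emph{lies} on a complete intersection of $11-d$ quadrics; the assertion that it is a \emph{general} such collection is a separate statement that the paper imports from \cite[Theorem~1.6]{quadrics}. Second, for $(6,2)$ you assert the failure ``necessarily occurs at a larger twist'' without justification; the paper instead observes that since $3 > g$, the divisor $\O_C(1)(p_1+p_2+p_3)$ is a general line bundle of degree $9$, so the claim reduces to passing a $(6,2)$-curve through $9$ general points, which is known.

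The substantive gap is in the three bullets. Your proposal to analyze $0 \to N_{C/Y} \to N_C \to N_{Y/\pp^4}|_C \to 0$ and ``use the extra freedom from deformations of $Y$'' is a plan, not a proof, and you concede the $(9,6)$ obstruction is unresolved. The paper's arguments here are concrete and rather different from what you sketch:
\begin{itemize}
\item For $(8,5)$, the $f+1$ statement is not obtained from the sequence at all, but from the combinatorial fact that any $7$ of the $8$ points of a complete intersection of three quadrics in $\pp^3$ determine the eighth; so the $f+1$ statement follows immediately from interpolation for $N_C(-D)$ with $\deg D = d-1$.
\item For $(10,7)$, the paper does use $N_{C/S}(-1)$ for $S$ the quadric threefold, but the required vanishing $h^0(N_{C/S}(-1)(-5\text{ pts})) = 0$ is proved by reducing to $h^0(N_C(-1)(-5\text{ pts})) = 0$ and then running a nontrivial degeneration of $C$ (to a $(6,2)$-curve plus a degenerate rational normal quartic, then further) rather than by any soft dimension count.
\item For $(9,6)$, the paper never computes with $N_{C/S}$ for the del Pezzo $S$. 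Existence of a curve through $f = 13$ constrained points is shown by explicitly constructing a suitable del Pezzo through the points and a line (Lemma~\ref{sm_dP}) and moving $C$ in a linear system on it. Nonexistence for $f+1 = 14$ is a rigidity argument: the $13$ general constrained points determine a unique del Pezzo (Lemma~\ref{sm_dP_2}), any curve through them must lie on it, and since $\Pic(S)$ is discrete the ninth hyperplane point is confined to a countable set on the elliptic hyperplane section.
\end{itemize}
None of these steps is captured by your normal-bundle-sequence outline, and the $(9,6)$ nonexistence in particular requires an idea---discreteness of the Picard group of the del Pezzo---that your dimension-count framework does not supply.
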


Using Theorem~\ref{main}, some additional work in the cases
$(d, g) \in \{(8, 5), (9, 6), (10, 7)\}$
yields:

\begin{ithm} \label{notwist}
Let $C$ be a general Brill--Noether curve of degree $d$ and genus $g$ in $\pp^4$.
Then $N_C$ satisfies interpolation if and only if
\[(d,g) \neq (6,2).\]
\end{ithm}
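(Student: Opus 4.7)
The plan is to deduce Theorem~\ref{notwist} from Theorem~\ref{main} and Corollary~\ref{cor_twist} by a case analysis on $(d,g)$.

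The first step is a twisting lemma: \emph{interpolation of $N_C(-1)$ implies interpolation of $N_C$}. For a general effective divisor $D$ on $C$, one specializes $D$ so that a subdivisor of it sits inside a general hyperplane section $H$; using $N_C(-H) = N_C(-1)$, the cohomology of $N_C(-D)$ is then controlled by interpolation of $N_C(-1)$ at a shifted degree, and semicontinuity together with the usual monotonicity of cohomology in $|D|$ extends the conclusion back to truly general $D$. (In the regime where $\deg D$ is too small to reach a full hyperplane section, one argues instead by Serre duality and monotonicity from the already-known vanishing at the critical degree.) This implication handles all $(d,g) \notin \{(6,2), (8,5), (9,6), (10,7)\}$ in one stroke.

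For $(d,g) \in \{(8,5), (9,6), (10,7)\}$, where $N_C(-1)$ fails interpolation, the additional input is Corollary~\ref{cor_twist}. In each case the corollary yields a strengthened incidence statement for $C$: for $(8,5)$ and $(10,7)$, $C$ passes through $f(d,g,4)+1$ general points subject to the hyperplane-plus-quadrics constraint on $d$ of them, and for $(9,6)$, $C$ passes through $f(d,g,4)$ points with $d-1$ on a hyperplane. Each of these statements translates, via the modular interpretation of interpolation, into control of $h^0$ of a particular twist of $N_C$ at exactly the cohomological degree where $N_C(-1)$ was previously deficient; one then bootstraps back to full interpolation of $N_C$ using the twisting lemma of the previous paragraph.

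For $(d,g) = (6,2)$, I would show $N_C$ does \emph{not} interpolate by exhibiting a specific effective divisor $D$ for which both $h^0(N_C(-D))$ and $h^1(N_C(-D))$ are nonzero. The source of the failure is the rich special projective geometry of such a curve (linearly normal, hyperelliptic, carrying a $g^2_4$, and lying on rational ruled surfaces in $\pp^4$), which forces $N_C$ to contain a subsheaf of anomalously large degree.

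The hardest step should be the analysis of the three exceptional cases in the second paragraph: each rests on a distinct piece of projective geometry identified in Corollary~\ref{cor_twist} (the quartic del Pezzo surface for $(9,6)$, the complete intersection of $11-d$ quadrics for the other two), and in each case one must verify that the strengthened enumerative content of the corollary plugs in \emph{exactly} at the cohomological degree needed to promote failure of interpolation for $N_C(-1)$ into success for $N_C$.
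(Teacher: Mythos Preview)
Your first step (interpolation for $N_C(-1)$ implies interpolation for $N_C$) is correct and matches the paper exactly; this is \cite[Proposition~4.11]{joint}.  Your treatment of $(6,2)$ is also fine in spirit --- the paper simply cites \cite{joint} for the failure.

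The gap is in your handling of $(d,g)\in\{(8,5),(9,6),(10,7)\}$.  There are two problems.

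First, the logical direction is wrong.  You propose to extract interpolation of $N_C$ from the enumerative content of Corollary~\ref{cor_twist}, but ``$C$ passes through the expected number of (constrained) general points'' does \emph{not} imply interpolation of $N_C$.  The paper itself furnishes the counterexample: a general curve of degree~$6$ and genus~$2$ passes through $f(6,2,4)=9$ general points (Corollary~\ref{cor_notwist}), yet $N_C$ fails interpolation.  Passing through points gives you $h^1(N_C(-D))=0$ at one degree, not the full range.

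Second, even if you tried to salvage the argument, it is circular: the bullet points of Corollary~\ref{cor_twist} concerning $(8,5)$, $(9,6)$, and $(10,7)$ are proved in \S6.3 \emph{using} Theorem~\ref{notwist} (look at the subsections ``Degree 8, genus 5'' through ``Degree 10, genus 7'', each of which invokes interpolation for $N_C$).

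What the paper actually does for these three cases is entirely independent of the corollary.  For $(8,5)$, a canonical curve is a complete intersection of three quadrics, so $N_C\simeq (K_C^{\otimes 2})^{\oplus 3}$ and interpolation is immediate since $K_C^{\otimes 2}$ is nonspecial.  For $(9,6)$ and $(10,7)$, the paper uses the degeneration with a rational quartic curve from Section~\ref{quartic}: Lemma~\ref{3_secant}(ii) reduces interpolation for $N_Y$ to interpolation for a simple modification $N_X[q_1+q_2+q_3\to p_1]$ on a nonspecial curve $X$ of degree $5$ or $6$, and this is then checked by the code of \cite[Appendix~B]{joint}.  These are the arguments you need to supply in place of the appeal to Corollary~\ref{cor_twist}.
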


In turn, we deduce an answer to the first question posed earlier
for $r = 4$:

\begin{icor}\label{cor_notwist}
A general Brill--Noether curve of degree $d$ and genus $g$ in $\pp^4$
passes through a set of $f(d, g, 4)$ general points, with no exceptions.
\end{icor}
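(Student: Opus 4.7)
The proof of the corollary rests on the standard deformation-theoretic translation: for a smooth BN-curve $C \subset \pp^4$ and a general effective divisor $D = p_1 + \cdots + p_k$ on $C$, the cokernel of the evaluation map from first-order deformations of $C$ in $M_g(\pp^4, d)^\circ$ to $\bigoplus_i N_C|_{p_i}$ is $H^1(N_C(-D))$. Thus $h^1(N_C(-D)) = 0$ for a general divisor $D$ of degree $k = f(d,g,4)$ implies, via a standard smoothing argument, that a general BN-curve passes through $k$ general points of $\pp^4$. Interpolation for $N_C$ supplies this vanishing at $k = f(d,g,4)$ since $3k \le \chi(N_C)$, so Theorem~\ref{notwist} yields the corollary for all $(d,g) \neq (6,2)$.

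The remaining case $(d,g) = (6,2)$ demands a separate argument, since interpolation of $N_C$ fails. Here $f(6,2,4) = 9$, $\chi(N_C) = 29$, and $\chi(N_C(-D)) = 2$ for $\deg D = 9$, so the desired statement is equivalent to $h^0(N_C(-D)) = 2$ for a general effective divisor $D$ of degree $9$. The failure of interpolation must occur at some degree $k \neq 9$, so the specific one-sided vanishing $h^1(N_C(-D)) = 0$ at degree $9$ can plausibly be established directly. My plan is to exhibit a reducible nodal BN-curve $C_0$ of degree $6$ and arithmetic genus $2$ --- for instance a configuration of rational components meeting in suitable nodes, such as a rational quintic joined to a line in three points, or a rational quartic joined to a conic in three points --- together with a degree-$9$ divisor $D_0$ on $C_0$, and to compute the cohomology of $N_{C_0}(-D_0)$ via the exact sequences relating $N_{C_0}$ to the normal bundles of the components and the node contributions. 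Once $h^1(N_{C_0}(-D_0)) = 0$ is verified, upper semicontinuity transfers the vanishing to a general smooth BN-curve, completing the proof.

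The main obstacle will be choosing the degeneration carefully so that $C_0$ simultaneously (i) lies in the closure of $M_2(\pp^4, 6)^\circ$ as a genuine smoothable limit of smooth BN-curves, (ii) has normal bundle twisted by $-D_0$ whose cohomology is explicitly computable from the component decomposition, and (iii) actually yields the desired vanishing rather than reproducing the obstruction that causes interpolation to fail. Addressing (iii) is the delicate part: the failure of interpolation at $(6,2)$ indicates that $N_C$ has extra sections in some twisted range, and we must verify that this pathology is concentrated at some degree $k > 9$ (where the normal bundle acquires unexpected $H^1$ while still having a section) and does not infect the specific cohomology computation at degree $9$.
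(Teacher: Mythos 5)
Your first paragraph reproduces the paper's argument for all $(d,g) \neq (6,2)$: interpolation for $N_C$ gives $h^1(N_C(-p_1-\cdots-p_k)) = 0$ for $k = f(d,g,4)$ general points (since $3k \leq \chi(N_C)$), and then smoothness of the evaluation map $M_{g,k}(\pp^4,d)^\circ \to (\pp^4)^k$ at such a marked curve gives dominance. That part is fine and is essentially what the paper does.

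The genuine gap is the case $(d,g) = (6,2)$. What you offer there is a plan, not a proof: you propose to find a reducible degeneration $C_0$ with a degree-$9$ divisor $D_0$ and verify $h^1(N_{C_0}(-D_0)) = 0$, but you never exhibit such a $C_0$, never carry out the cohomology computation, and you yourself flag the three obstacles (smoothability into $M_2(\pp^4,6)^\circ$, computability, and avoiding the obstruction that kills interpolation) without resolving them. Moreover, the assertion that ``the failure of interpolation must occur at some degree $k \neq 9$'' is a non sequitur: knowing only that interpolation fails does not tell you at which degree it fails, so the one-sided vanishing at degree $9$ cannot be inferred this way and genuinely requires an argument. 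The paper closes this case much more cheaply: since $6 \geq g + 4$, a degree-$6$ genus-$2$ curve in $\pp^4$ is nonspecial, and it is already known from the nonspecial-range results of \cite{joint} (cited there as Corollary~1.4) that such a curve passes through $9 = f(6,2,4)$ general points, equivalently that $h^1(N_C(-p_1-\cdots-p_9)) = 0$ for $9$ general points even though $N_C$ fails interpolation; the evaluation-map argument then goes through verbatim. So either invoke that known result, or actually carry out your proposed degeneration and semicontinuity computation --- as written, the exceptional case is not established.
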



The main techniques in this paper resemble those in \cite{joint} and \cite{quadrics}, namely we degenerate to reducible curves.  As interpolation is an open condition, it suffices to find one BN-curve of each relevant degree and genus whose (twisted) normal bundle satisfies interpolation.  We achieve this by specializing to nodal curves in the boundary of $\bar{M}_g(\pp^4, d)^\circ$, where we can relate interpolation for the normal bundle of the union to interpolation for some modifications of the normal bundles of the components.  
The degenerations here, though, are necessarily more complicated than those in \cite{joint}, as degenerating to the union of a curve and a $1$- or $2$-secant line alone cannot hope to apply to special curves where $d < g + r$.
We employ three primary classes of degenerations.  

First, as developed in \cite{quadrics}, we degenerate to a BN-curve which is the union of a curve $X \subset H \simeq \pp^3$ in a hyperplane and a nonspecial curve $Y$, transverse to $H$, meeting $X$ transversely at a finite set of specified points.  This allows us to leverage the known interpolation results in the nonspecial range \cite{joint} and in $\pp^3$ \cite{vogt} to inductively prove interpolation for all BN-curves from a finite set of base cases.  Resolving these base cases consumes the remainder of the paper.

The second type of degeneration is to a connected nodal curve containing a canonical curve $D \subset \pp^4$ as well as three $2$-secant lines to $D$.  Using the geometry of the canonical curve, in particular the fact that $N_D(-1) \simeq K_D^{\oplus 3}$, we prove a delicate degeneration lemma to reduce to the problem of interpolation for the normal bundle of the union to interpolation for a simple modification of the normal bundle of the remainder.

The final specialization is to the union of a degenerate rational normal quartic curve meeting the other component in $6$ points.  The rational normal quartic curve is the union of three $2$-secant lines, and the unique fourth line meeting all three.  
We hope that this technique is robust, and that some variant may be useful for related problems.

The paper is organized as follows.  In Section~\ref{prelim} we briefly recall basic facts about modifications of vector bundles on curves and the technique of degenerating to reducible curves to prove interpolation.  In Section~\ref{finitelist} we employ the first degeneration to the union of a curve in a hyperplane and a curve transverse to the hyperplane to reduce to checking a finite set of cases.  The canonical curve degeneration lemma is in Section~\ref{canonical} and the degeneration with the rational quartic curve is in Section~\ref{quartic}.  At this point all of the difficult work has been done, and we collect loose ends in Section~\ref{proofs} to prove the main theorems.

\subsection*{Acknowledgments}
We would like to thank Joe Harris for his guidance, as well as members of the MIT and Harvard math departments for helpful conversations.  The first author acknowledges the generous support both of the Fannie and John Hertz Foundation, and
of the Department of Defense (NDSEG fellowship). The second author acknowledges the generous support of the National Science Foundation Graduate Research Fellowship Program under Grant No. 1122374.

\section{Preliminaries}\label{prelim}

\subsection{Notation}

In this paper, we deal with the Kontsevich space compactification of Brill--Noether curves of degree $d$ and genus $g$ in $\pp^r$.  In particular, for the remainder of the paper, a curve $C \to \pp^r$ is shorthand for a stable map $f \colon C \to \pp^r$.  In all of our computations, we will assume that the map $f$ is unramified, so that the normal sheaf of the map, which we will denote by $N_C$ when the map $f$ is implicit, or $N_f$ to draw attention to the map, 
is a vector bundle and its cohomology controls the deformation theory of $f$.  
A general such map is an embedding of a smooth curve, and so $N_f$ is isomorphic to the normal bundle of the embedded smooth curve.  The only subtlety arises when degenerating such curves.

Suppose that $f \colon C \to \pp^r$ and $g \colon D \to \pp^r$ are curves of degree and genus $(d,g)$ and $(d', g')$ respectively, and $\{p_i\}_{i \in I} \in C$ and $\{q_i\}_{i \in I} \in D$ are points such that 
\[f(p_i) = g(q_i),  \text{ for all $i \in I$}.\]
Write $\Gamma = f(\{p_i\}) = g(\{q_i\})$.  Let $C \cup_\Gamma D$ denote the nodal curve obtained by gluing $p_i \in C$ to $q_i \in D$, mapped to $\pp^r$ via the stable map
\[f\cup g \colon C \cup_\Gamma D \to \pp^r.\]
If $C \to \pp^r$ and $D \to \pp^r$ are curves passing through a finite set of points $\Gamma \subset \pp^r$, then we use $C \cup_\Gamma D$ to denote the same construction for some choice of $\{p_i\}$ and $\{q_j\}$ mapping isomorphically onto $\Gamma$ under $f$ and $g$ respectively.

Note that the curve $C \cup_\Gamma D$ is of degree $d + d'$ and genus $g + g' + \#I -1$ even if $f(C)$ and $g(D)$  meet away from $\Gamma$.  This is our primary interest in using the Kontsevich space compactification.

\subsection{Modifications of Vector Bundles}\label{modifications}

In this section we recall some basic facts about modifications of vector bundles on curves to streamline the coming computations.  For a more detailed exposition, see \cite[\textsection 2-3]{joint}.

\begin{defin}
Let $E$ be a vector bundle on a variety $X$ and let $D \subset X$ be an effective Cartier divisor.  Suppose that $U \subset X$ is an open subset containing $D$ and that $F \subseteq E|_U$ is a subbundle defined on $U$.  Then the \defi{modification of $E$ at $D$ towards $F$}, denoted $E[D \to F]$, is defined by the exact sequence
\[0 \to E[D \to F] \to E \to (E/F)|_D \to 0. \]
\end{defin}
The reason for the terminology is that sections of the modification $E[D \to F]$  ``point towards $F$" when restricted to the divisor $D$:
\[H^0(E[D \to F]) = \{ \sigma \in H^0(E) : \sigma|_D \in F|_D \}. \]
The modified bundle $E[D \to F]$ is isomorphic to $E$ when restricted to $X \smallsetminus D$.  Therefore given another divisor $D' \subset X \smallsetminus D$ and another subbundle $F' \subseteq E$, we may form the multiple modification
\[E[D \to F][D' \to F'] \]
taking our open set $U = X \smallsetminus D$.  In this setting $E[D \to F][D' \to F'] \simeq E[D' \to F'][D \to F]$;
however we will need multiple modifications in a stronger setting in which $D$ and $D'$ are allowed to meet. For that reason, we have the following definition:

\begin{defin}
Let $\{F_i \subseteq E|_{U_i}\}_{i \in I}$ be a finite collection of subbundles of $E$ defined on open neighborhoods $U_i$ of a point $x \in X$.
We say that this collection is \defi{tree-like at $x \in X$} if for all index subsets $I' \subseteq I$, either
\begin{itemize}
\item The fibers $\{F_i|_x \}_{i \in I'} \subset E|_x$ are linearly independent, or
\item There are distinct indices $i, j \in I'$, and an open $U \subset U_i \cap U_j$ containing $x$ such that $F_i|_U \subset F_j|_U$.
\end{itemize}
\end{defin}

\begin{defin}
A \defi{modification datum} $M$ for a vector bundle $E$ is a finite collection of tuples $\{(D_i, U_i, F_i)\}_{i \in I}$, where $D_i$ is an effective Cartier divisor on $X$, and $U_i \subset X$ is an open subset containing $D_i$, and $F_i \subseteq E|_{U_i}$ is a subbundle.  We say that a modification datum is \defi{tree-like} if for all $x \in X$, the collection of subbundles $\{F_i \subseteq E : x \in D_i\}$ is tree-like at $x$.
\end{defin}

The key result is then the following

\begin{prop}[{\cite[Proposition 2.17]{joint}}]\label{transfer}
There is a bijection $\varphi$ between modification data $M$ for $E$ such that $(D, U, F) \cup M$ is tree-like and modification data $M'$ for $E[D \to F]$ such that $(D, U, F) \cup M'$ is tree-like.  This bijection is compatible with pullbacks and restricts to the identity when $D = \emptyset$.  
\end{prop}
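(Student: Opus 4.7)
The statement is local on $X$, so I would first localize around an arbitrary point $x \in X$: away from $D$, the vector bundles $E$ and $E[D \to F]$ agree, so any modification datum transfers trivially. The interesting case is near a point $x \in D$, where we may work in a local trivialization and exploit the defining short exact sequence
\[0 \to E[D \to F] \to E \to (E/F)|_D \to 0.\]

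For each $(D_i, U_i, F_i) \in M$, I would define the transferred subbundle $F_i' \subseteq E[D \to F]|_{U_i}$ using the tree-like compatibility of $F_i$ with $F$. Near a point $x \in D \cap D_i$, the tree-like hypothesis applied to the pair $\{F, F_i\}$ ensures that $G_i := F \cap F_i$ is locally a subbundle (equal to $0$, to $F_i$, or to $F$). Setting
\[F_i' := F_i[D \cap D_i \to G_i],\]
sections of $F_i'$ land in $G_i \subseteq F$ along $D$ and hence define sections of $E[D \to F]$, yielding a canonical map $F_i' \hookrightarrow E[D \to F]$. A local computation with the exact sequence above should verify that $F_i'$ is a subbundle of $E[D \to F]|_{U_i}$ of rank $\rk F_i$.

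To check that $\{(D, U, F)\} \cup \varphi(M)$ is again tree-like on $E[D \to F]$, I would work fiberwise: at a point $y \in D$, the map $E[D \to F]|_y \to E|_y$ has image $F|_y$ and kernel canonically identified with $(E/F)|_y$ (twisted by the conormal of $D$), so fiberwise containments among the $F_i'$ can be read off from those among the $F_i$ together with the position of $F$. The inverse $\varphi^{-1}$ comes from the symmetric construction, using that $E$ is recovered from $E[D \to F]$ as a modification in the opposite direction toward the kernel subbundle just identified; checking $\varphi \circ \varphi^{-1} = \mathrm{id}$ reduces to a local computation. Pullback compatibility is automatic because every step of the construction is natural and pointwise.

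The main obstacle I anticipate is the combinatorial case analysis forced by the tree-like condition. For pairs of data $(D_i, U_i, F_i)$ and $(D_j, U_j, F_j)$ with nontrivial fiber containments among $F_i$, $F_j$, and $F$ at a common point, one must verify that the transferred $F_i'$ and $F_j'$ remain in a tree-like configuration on $E[D \to F]$. The technical core is to show that the operation $F_i \mapsto F_i[D \cap D_i \to G_i]$ is compatible with the fiberwise intersection and sum operations among the $F_i$; this reduces to checking each of the finitely many local tree configurations by hand, and handling carefully the edge cases where $D_i$ or $D_j$ coincide with $D$ or meet non-transversely.
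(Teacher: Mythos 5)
First, note that this paper does not actually prove the statement: it is quoted from \cite[Proposition~2.17]{joint}, so the only meaningful comparison is with the proof there, i.e.\ with what a complete argument requires. Your construction is the standard and, where it is defined, the correct one: at a point of $D \cap D_i$ the tree-like hypothesis for the pair $\{F, F_i\}$ leaves three local configurations (independent fibers, $F_i \subseteq F$, $F \subseteq F_i$), and the transferred subbundle of $E[D \to F]$ is respectively $F_i(-D)$, $F_i$, and $F_i[D \to F]$, which is exactly your recipe $F_i' = F_i[\,\cdot \to F \cap F_i]$; your fiberwise description of $E[D \to F]|_y \to E|_y$ (image $F|_y$, kernel $(E/F)(-D)|_y$) is the right tool for the tree-likeness check, and the inverse via modification toward a subbundle restricting to that kernel is the right idea.

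Two points, however, are genuine gaps. First, as written the formula $F_i[D \cap D_i \to G_i]$ does not parse in the generality the paper needs: modifications are defined only along effective Cartier divisors, and on the total spaces $B \times C$ used in Proposition~\ref{limit} the locus $D \cap D_i$ has codimension two. The modification must be along $D$ itself restricted to (a shrinking of) $U_i$; but then $G_i = F \cap F_i$ is only known to be a subbundle near $D \cap D_i$, since the tree-like hypothesis says nothing about the relative position of $F$ and $F_i$ at points of $D \cap U_i$ not lying on $D_i$. You therefore need to shrink $U_i$ to an open set $U_i'$ containing $D_i$ with $D \cap U_i'$ contained in the locus of good position (such a $U_i'$ exists, e.g.\ $(U_i \smallsetminus D) \cup W$ for $W$ a suitable neighborhood of $D \cap D_i$), and you should address how this shrinking interacts with the claim that $\varphi$ is a bijection. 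Second, the actual content of the proposition --- that the transferred collection is again tree-like for \emph{every} index subset, that $\varphi$ is bijective, and pullback compatibility --- is deferred in your write-up to ``finitely many local configurations checked by hand.'' That case analysis is where the hypothesis that $F$ itself belongs to the tree-like collection is used: for instance, independence of $F_i'|_x$ and $F_j'|_x$ inside the kernel $(E/F)(-D)|_x$ is equivalent to independence of $F|_x, F_i|_x, F_j|_x$, which is precisely the subset condition for $\{F, F_i, F_j\}$. A complete proof must carry out these verifications rather than assert them; as it stands your proposal is a correct plan with the decisive steps left open.
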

We may therefore recursively define $E[M]$, for $M = (D, U, F) \cup N$ by
\[E[M] \colonequals E[D \to F][\varphi(N)]. \]
For simplicity, when $M = \{(D_i, U_i, F_i)\}_{1 \leq i \leq n} $, we will write 
\[E[M] = E[D_1\to F_1] \cdots [D_n \to F_n]. \]
We have the following nice properties of tree-like modifications:

\begin{prop}[{\cite[Propositions 2.20, 2.21, 2.23]{joint}}] \label{mod-prop}
Let $E$ be a vector bundle on $X$ and let $M$ be a tree-like modification datum.  Then we have
\begin{enumerate}
\item (Commuting modifications) If $M'$ is any reordering of $M$, we have $E[M] \simeq E[M']$.
\item (Commuting with twists) If $D$ is any effective Cartier divisor, then $E[M](D) \simeq E(D)[M]$.
\item (Combining modifications to the same bundle) If $D$ and $D'$ are effective
Cartier divisors, we have
\[E[D \to F][D' \to F] \simeq E[D + D' \to F]. \]
\item (Combining modifications at the same divisor) If $F_1$ and $F_2$ are independent, then
\[E[D \to F_1][D \to F_2] \simeq E[D \to F_1 \oplus F_2](-D). \]
\end{enumerate}
\end{prop}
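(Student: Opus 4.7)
The plan is to verify each of the four items separately. Since all statements concern isomorphisms of vector bundles on $X$, I would work locally in the stalk $\O_{X,x}$ at a point, using local trivializations of $E$ adapted to the subbundles appearing in the modification datum.

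Parts (2), (3), and (4) are direct local computations. For part (2), tensoring the defining sequence $0 \to E[D \to F] \to E \to (E/F)|_D \to 0$ by the invertible sheaf $\O_X(D')$ remains exact and identifies the kernel with $E(D')[D \to F(D')]$; the case of a general tree-like $M$ then follows by induction on $|M|$. For part (3), in a local splitting $E = F \oplus G$ near a point of $D \cup D'$ with local equations $s, t$ for $D, D'$, both $E[D \to F][D' \to F]$ and $E[D + D' \to F]$ equal $F \oplus stG$. For part (4), the tree-like hypothesis yields a local splitting $E = F_1 \oplus F_2 \oplus G$ (since $F_1, F_2$ are independent on $D$); with $t$ a local equation for $D$, one computes $E[D \to F_1] = F_1 \oplus tF_2 \oplus tG$, checks that the transferred copy of $F_2$ inside $E[D \to F_1]$ is $tF_2$, and finds that a second modification produces $tF_1 \oplus tF_2 \oplus t^2 G$, which matches $E[D \to F_1 \oplus F_2](-D)$.

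Part (1), commutativity of modifications, is where I expect the main difficulty. I would induct on $|M|$ and reduce to the case of two entries $(D_1, U_1, F_1)$, $(D_2, U_2, F_2)$. Since the claim is local, I fix $x \in X$; if $x \notin D_1 \cap D_2$, then at most one modification acts near $x$ and commutativity is immediate. At $x \in D_1 \cap D_2$ the tree-like hypothesis splits into two subcases: either $F_1, F_2$ are fiberwise independent at $x$, in which case a local computation mirroring part (4) produces a symmetric expression, or one is contained in the other, say $F_1 \subset F_2$ on a neighborhood, in which case a splitting $F_2 = F_1 \oplus F_2'$ reduces both orderings to the same local bundle.

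The primary obstacle is correctly managing the transfer map $\varphi$ of Proposition~\ref{transfer}: after one modification, the other subbundle no longer sits literally inside the modified bundle and must be replaced by its image under $\varphi$. One must verify both that the tree-like property persists across this transfer and that the two orderings produce identical output. Once this bookkeeping is arranged, each case reduces to a short explicit computation in a local trivialization, and parts (2)--(4) can be invoked to finish.
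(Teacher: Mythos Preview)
The paper does not give a proof of this proposition at all: it is stated with a citation to \cite[Propositions~2.20, 2.21, 2.23]{joint} and no proof environment follows. So there is nothing in the paper to compare against, and your proposal goes beyond what the paper itself supplies.

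That said, your sketch is a correct outline of how these facts are actually established. The local computations for (2)--(4) are right; in particular your identification of the transferred copy of $F_2$ inside $E[D\to F_1]$ as the direct summand $tF_2$ (locally) is the key point in (4), and your computation $tF_1 \oplus tF_2 \oplus t^2G$ matches $E[D\to F_1\oplus F_2](-D)$. For (1), your reduction to two entries and case split on the tree-like dichotomy (independent fibers versus containment) is exactly the right structure. The bookkeeping with $\varphi$ that you flag as the main obstacle is genuine but not deep: one must check that after one modification the transferred subbundle agrees, on a neighborhood of each point of $D_1\cap D_2$, with the saturation of the original subbundle inside the modified bundle, and that this preserves the tree-like condition. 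Once that is in hand, the inductive step (any permutation is a product of adjacent transpositions) goes through. Your plan would produce a complete proof.
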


\begin{rem}
Even though we focus in this paper on modifications of vector bundles on curves,
for which the curve-to-projective extension theorem easily
gives the analog of Proposition \ref{transfer} for arbitrary collections of modifications (i.e.\ if we drop the tree-like assumption on both sides),
we still need the language of tree-like modifications for two reasons.

First, the properties of Proposition~\ref{mod-prop} will not, in general, hold
when the modifications are not tree-like.

Second, 
we need the more general formalism of modifications on
arbitrary varieties in order to ``limit" various modifications together by considering vector bundles on the total space of a family of curves, see Proposition~\ref{limit} below for an illustrative example.  Compatibility with base-change guarantees that the results agrees with the expected result on each curve. 
\end{rem}

We will deal primarily with modifications of the normal bundles of curves $C \to \pp^r$ towards pointing bundles $N_{C \to \Lambda}$, for $\Lambda \subset \pp^r$ a linear space of dimension $\gamma$.  To recall, suppose that the locus 
\[U_{C, \Lambda} \colonequals \{p \in C : T_pC \cap \Lambda = \emptyset\} \]
is dense and contains the singular locus of $C$.  Then on $U_{C, \Lambda}$, we suggestively define $N_{C \to \Lambda}|_U$ to be the kernel of the map
\[N_C|_{U_{C, \Lambda}}\to N_{\pi_\Lambda|_{U_{C, \Lambda}}}, \]
where $N_{\pi_\Lambda|_{U_{C, \Lambda}}}$ is the normal sheaf of the projection map restricted to $U_{C, \Lambda}$.  By our assumptions on $U_{C, \Lambda}$, the curve-to-projective extension theorem implies that there is a unique vector subbundle $N_{C\to \Lambda} \subseteq N_C$ agreeing on $U_{C, \Lambda}$ with $N_{C \to \Lambda}|_{U_{C, \Lambda}}$.

When modifying towards a pointing bundle, we will write
\[N_C[D \to \Lambda] \colonequals N_C[D \to N_{C \to \Lambda}]. \]

These modifications naturally arise when computing the restrictions of normal bundles of nodal curves to the irreducible factors, as in the following fundamental result of Hartshorne and Hirschowitz.

\begin{lem}[{\cite{HH}}]\label{normal_bundle_nodal}
Let $C_1 \cup_\Gamma C_2$ be a nondegenerate nodal curve in $\pp^r$, with $\Gamma = \{p_1, \cdots p_n\}$.  For each $i$, let $p_i'$ be a choice of point on the tangent line $T_{p_i}C_1$ and let $p_i''$ be a choice of point on the tangent line $T_{p_i} C_2$.  Then we have
\begin{align*}
N_{C_1 \cup_\Gamma C_2}|_{C_1} &\simeq N_{C_1}(\Gamma)[p_1 \to p_1''] \cdots [p_n \to p_n''] \\
N_{C_1 \cup_\Gamma C_2}|_{C_2} &\simeq N_{C_2}(\Gamma)[p_1 \to p_1'] \cdots [p_n \to p_n']
\end{align*}
\end{lem}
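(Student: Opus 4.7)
The plan is to reduce to a local calculation at each node, since away from $\Gamma$ both $N_{C_1\cup_\Gamma C_2}|_{C_1}$ and $N_{C_1}$ are canonically isomorphic to the normal bundle of the smooth curve $C_1\subset \pp^r$, and each modification $[p_i\to p_i'']$ acts trivially off of $p_i$. It therefore suffices to produce, near each node $p_i$, a local isomorphism
\[ (N_{C_1\cup_\Gamma C_2}|_{C_1})_{p_i} \;\simeq\; (N_{C_1}(p_i)[p_i\to p_i''])_{p_i}, \]
and then use the commuting and combining rules of Proposition~\ref{mod-prop} to assemble the node-by-node identifications into a single global isomorphism.

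For the local calculation I would pick coordinates $x_1,\ldots,x_r$ on $\pp^r$ centered at $p_i$ with $T_{p_i}C_1 = \text{span}(e_1)$ and $T_{p_i}C_2 = \text{span}(e_2)$; after passing to an \'etale neighborhood of $p_i$ I may arrange that $C_1 = V(x_2,x_3,\ldots,x_r)$ and $C_2 = V(x_1,x_3,\ldots,x_r)$, so that $X := C_1\cup_\Gamma C_2$ has ideal
\[ I_X = (x_3,\ldots,x_r,\, x_1 x_2). \]
These $r-1$ generators form a regular sequence at $p_i$ (once $x_3,\ldots,x_r$ are quotiented out, $x_1 x_2$ is a nonzerodivisor in $k[x_1,x_2]$), so $X$ is a local complete intersection and $N_X = \Hom(I_X/I_X^2,\O_X)$ is locally free of rank $r-1$. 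Restricting to $C_1$ and reading off the duals of the generators, $x_3,\ldots,x_r$ contribute $r-2$ sections in the directions $e_3,\ldots,e_r$ that agree with sections of $N_{C_1}$, while the dual of $x_1 x_2$ yields a section of $N_{C_1}$ with a simple pole at $p_i$ whose principal part is the image of $e_2=T_{p_i}C_2$ in the quotient $T_{p_i}\pp^r/T_{p_i}C_1$. This matches exactly the local description of $N_{C_1}(p_i)[p_i\to T_{p_i}C_2]$, whose sections are those of $N_{C_1}(p_i)$ whose residue at $p_i$ lies in the rank-one subbundle $N_{C_1\to T_{p_i}C_2}$.

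The main obstacle will be the bookkeeping required to promote this stalk-level identification into a genuine sheaf isomorphism compatible with the modification formalism of Section~\ref{modifications}. In particular, one must verify that the ``residue direction $e_2$'' has a coordinate-free meaning as the image of $T_{p_i}C_2$ in $T_{p_i}\pp^r/T_{p_i}C_1$, which requires transversality of the two branches at the node to ensure this image is a well-defined one-dimensional subspace. Independence of the modification from the choice of representative $p_i''$ on the tangent line then follows directly from the definition of modification towards a rank-one subbundle (scaling a nonzero vector does not change the span), and modifications at distinct nodes automatically commute by Proposition~\ref{mod-prop}(1), yielding the first displayed isomorphism. The formula for $N_{C_1\cup_\Gamma C_2}|_{C_2}$ follows by swapping the roles of $C_1$ and $C_2$.
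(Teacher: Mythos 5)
The paper does not actually prove this lemma --- it is quoted from Hartshorne--Hirschowitz \cite{HH} --- and your argument is essentially the standard local computation underlying that citation, so there is no real divergence to report: the local model $I_{C_1\cup_\Gamma C_2}=(x_3,\dots,x_r,x_1x_2)$, the identification of the dual of $x_1x_2$ with a section of $N_{C_1}$ having a simple pole at $p_i$ in the direction of $T_{p_i}C_2$, and the resulting match with $N_{C_1}(p_i)[p_i\to p_i'']$ are all correct. The globalization you flag as the main obstacle is in fact immediate: the inclusion of ideals $I_{C_1\cup_\Gamma C_2}\subseteq I_{C_1}$ induces a natural sheaf map $N_{C_1}\to N_{C_1\cup_\Gamma C_2}|_{C_1}$ which is an isomorphism away from $\Gamma$, and your stalk computation identifies the target at each node with the stated modification in a coordinate-free way (the residue direction being the image of $T_{p_i}C_2$ in $N_{C_1}|_{p_i}$, well defined by transversality of the branches); since $E[D\to F]$ depends only on $F|_D$, independence of the choice of $p_i''\in T_{p_i}C_2\smallsetminus\{p_i\}$ is automatic, as you say.
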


As indicated above, we will make ample use of the fact that the semicontinuity theorem guarantees that interpolation for a vector bundle $E$ on $C$ follows from interpolation for some specialization of $E$.  We make this precise in the following example, but will omit these details in the future.

\begin{prop}\label{limit}
Let $C \to \pp^r$ be a curve and $p_1$ and $p_2$ distinct points on $C$.  Let $q_1$ and $q_2$ be distinct points in $\pp^r$.  Then 
\begin{enumerate}
\item if $p_2$ is general and $T_{p_1}C, q_1, q_2$ span a $\pp^3$, interpolation for 
\[N_C[p_1 \to q_1][p_1 \to q_2] \simeq N_C(-p_1)[p_1 \to q_1 + q_2]\]
implies interpolation for $N_C[p_1 \to q_1][p_2 \to q_2]$,
\item if $q_2$ is general and $T_{p_2}C, q_1$ span a $\pp^2$, interpolation for
\[N_C[p_1 \to q_1][p_2 \to q_1] \simeq N_C[p_1 + p_2 \to q_1]\]
implies interpolation for $N_C[p_1 \to q_1][p_2 \to q_2]$.
\end{enumerate}
\end{prop}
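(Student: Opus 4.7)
The plan is to realize each of the two statements as a degeneration. Specifically, I would construct a one-parameter family of modification data on the constant family of curves $\mathcal{C} = C \times B \to B$, for $(B, 0)$ a smooth pointed curve, so that the resulting vector bundle $\mathcal{E}$ on $\mathcal{C}$ restricts on the general fiber to $N_C[p_1 \to q_1][p_2 \to q_2]$ and on the special fiber $\mathcal{C}_0$ to the combined modification displayed in the statement. Upper semicontinuity of cohomology in flat families, applied to $\mathcal{E}(-\tilde D)$ for $\tilde D$ varying in $\Sym^k(\mathcal{C}/B)$ for each $k$, then transports interpolation from $\mathcal{E}_0$ to a general fiber; by the genericity hypothesis on $p_2$ (respectively $q_2$), this is exactly the desired conclusion.

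For part (1), take $D_1 = \{p_1\} \times B \subset \mathcal{C}$ and let $D_2 \subset \mathcal{C}$ be a section of the second projection with $D_2(0) = (p_1, 0)$ and $D_2(t) = (p_2(t), t)$ for $t \neq 0$, where $p_2(t)$ is a general one-parameter family of points on $C$ specializing to $p_1$. Let $F_j$ be the pointing subbundle of the relative normal bundle $\mathcal{N} = N_{\mathcal{C}/\pp^r \times B}$ toward $\{q_j\} \times B$, defined on a neighborhood of $D_j$. The key verification is that $\{(D_1, F_1), (D_2, F_2)\}$ is tree-like on $\mathcal{C}$; this is automatic away from the unique collision point $(p_1, 0) \in D_1 \cap D_2$, and at that point tree-likeness requires $F_1|_{(p_1, 0)}$ and $F_2|_{(p_1, 0)}$ to be linearly independent in $N_C|_{p_1}$. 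Unwinding the definition of the pointing bundle, these rank-one fibers are the images in $T_{p_1}\pp^r/T_{p_1}C$ of the tangent directions to $\overline{p_1 q_1}$ and $\overline{p_1 q_2}$, and they are linearly independent precisely under the hypothesis that $T_{p_1}C, q_1, q_2$ span a $\pp^3$. Setting $\mathcal{E} = \mathcal{N}[D_1 \to F_1][D_2 \to F_2]$ and applying Proposition \ref{transfer} in the form of base-change compatibility, we find $\mathcal{E}|_{\mathcal{C}_t} \simeq N_C[p_1 \to q_1][p_2(t) \to q_2]$ for $t \neq 0$, while $\mathcal{E}|_{\mathcal{C}_0} \simeq N_C[p_1 \to q_1][p_1 \to q_2]$, which by Proposition \ref{mod-prop}(4) equals $N_C(-p_1)[p_1 \to q_1 + q_2]$.

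For part (2), the setup is analogous but simpler: fix $D_1 = \{p_1\} \times B$ and $D_2 = \{p_2\} \times B$ as constant horizontal divisors, and let $q_2(t)$ be a general one-parameter family in $\pp^r$ with $q_2(0) = q_1$. Let $F_1$ point to $\{q_1\} \times B$ and $F_2$ point to the section $\{q_2(t)\}_{t \in B} \subset \pp^r \times B$. The hypothesis that $T_{p_2}C, q_1$ span a $\pp^2$ is precisely what guarantees $F_2$ is defined in a neighborhood of $(p_2, 0)$, and disjointness of $D_1$ and $D_2$ makes the tree-like condition vacuous. The general fiber of $\mathcal{E} = \mathcal{N}[D_1 \to F_1][D_2 \to F_2]$ is $N_C[p_1 \to q_1][p_2 \to q_2(t)]$, while Proposition \ref{mod-prop}(3) identifies the special fiber as $N_C[p_1 \to q_1][p_2 \to q_1] \simeq N_C[p_1 + p_2 \to q_1]$.

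The main obstacle is the careful bookkeeping in part (1): one must verify that the modification datum is globally tree-like on the surface $\mathcal{C}$ and that Proposition \ref{transfer}'s base-change compatibility produces the expected identifications on each fiber. This is exactly the setting that motivated the formalism of modifications on arbitrary varieties emphasized in the remark immediately preceding the proposition.
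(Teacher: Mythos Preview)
Your proposal is correct and follows essentially the same approach as the paper: constructing a family of tree-like modification data on $C \times B$ whose special fiber is the combined modification and whose general fiber is the original, then invoking semicontinuity. The only cosmetic difference is that the paper makes the base explicit---taking $B = C$ in part~(1) (with the diagonal playing the role of your $D_2$) and $B = \pp^r$ in part~(2)---which automatically ensures that the ``general'' fiber corresponds to a genuinely general $p_2 \in C$ or $q_2 \in \pp^r$, whereas your abstract one-parameter families require a word about why this suffices.
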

\begin{proof}
For part (1), let $B = C$ and let $\Delta \subset B \times C$ denote the diagonal.  Write $P_i$ for the divisor $B \times \{p_i\} \subset B \times C$.  Let $\pi_1 \colon B \times C \to B$ and $\pi_2 \colon B \times C \to C$ denote the two projections.  The we have inclusions of vector bundles on $B \times C$:
\[\pi_2^*N_{C \to q_1} \subseteq \pi_2^*N_{C}, \qquad \pi_2^*N_{C \to q_2} \subseteq \pi_2^*N_{C}.\]
By our assumption that $T_{p_1}C, q_1, q_2$ are in linear general position, the modification datum 
\[M = \{(P_1, B \times C, \pi_2^*N_{C \to q_1} ), (\Delta, B \times C, \pi_2^*N_{C \to q_2}) \} \]
is tree-like at $\Delta \cap P_1 = (p_1, p_1) \in B \times C$ and hence is tree-like.  Therefore the multiple modification
\[\pi_2^*N_C[M] = \pi_2^*N_C[P_1 \to q_1][\Delta \to q_2] \]
is a vector bundle on $B \times C$, which restricts to $N_C[p_1 \to q_1][p_2 \to q_2]$ on the fiber $\pi_1^{-1}(p_2)$ and $N_C[p_1 \to q_1][p_1 \to q_1]$ on the fiber $\pi_1^{-1}(p_1)$.   By the semicontinuity theorem, interpolation for the second implies interpolation for the first with $p_2$ general, proving part (1).

For part (2), let $B =\pp^r$.
It is shown in \cite[Section 5]{joint} that as the linear space $q_2$ varies in $B$, the open subsets $U_{C, q_2} \subset C$ fit together into an open set $U_{B \times C, q_2} \subset B \times C$.  Furthermore, over this open subset the pointing bundles $N_{C \to q_2}$ fit together into a vector bundle, which we will abusively also denote $N_{C \to q_2}$, on $U_{B \times C, q_2}$.  By assumption, $T_{p_2}C \cap q_1 = \emptyset$, so $(q_1, p_2) \in U_{B \times C, q_2}$, and therefore after shrinking $B$ we may assume that the divisor $P_2 \subset U_{B \times C, q_2}$.  Therefore
\[(P_2, U_{C \times B, q_2}, N_{C \to q_2}) \]
is a modification datum for $\pi_2^*N_C$, and the modification data
\[M = \{(P_1, B \times C, \pi_2^* N_{C \to q_1}), (P_2, U_{B \times C, q_2}, N_{C \to q_2})\} \]
is vacuously tree-like.  Therefore the multiple modification $\pi_2^*N_C[M]$ is a vector bundle on $U_{B \times C, q_2}$ which restricts to 
\[N_C[p_1 \to q_1][p_2 \to q_1] \]
over the special point $q_1 \in B$ and to $N_C[p_1 \to q_1][p_2\to q_2]$ for general $q_2$ in a neighborhood of $q_1$ in $B$.  Interpolation for the second therefore follows from interpolation for the first by the semicontinuity theorem.
\end{proof}

In the above proof, the generality assumptions on the points in $C$ and linear subspaces in $\pp^r$ guaranteed that the necessary modifications on the total space of the family of curves were well-defined and tree-like.  In the remainder of the paper, when we wish to specialize either points or linear spaces to prove interpolation, we will indicate what genericity assumptions we are using, but leave the reader to fill in the details of the argument with the above example as a guide.

\subsection{Interpolation for Vector Bundles}

In the course of the proof, we will make use of the following lemma, which allows us to prove interpolation by finding a single divisor such that the twist of our vector bundle by that divisor has a vanishing cohomology group.  As in \cite{vogt}, if the Euler characteristic $\chi(E)$ is divisible by the rank $r$ of $E$, it suffices to find a single effective divisor of degree $\chi(E)/r$ such that $h^0(E(-D)) = 0$, or equivalently $h^1(E(-D)) = 0$.  The following generalization covers all remaining possibilities for a vector bundle of rank $3$.

\begin{lem}\label{check_one}
Let $E$ be a nonspecial vector bundle of rank $r$ on a curve.  Assume that $\chi(E) \equiv 1 \pmod r$ (respectively $\chi(E) \equiv -1 \pmod r$). If there exists an effective divisor $D$ of degree $\left\lfloor \frac{\chi(E)}{r} \right\rfloor$ (resp.\ $\left\lceil \frac{\chi(E)}{r} \right\rceil$) such that $h^1(E(-D)) = 0$ (resp.\ $h^0(E(-D)) = 0$), then $E$ satisfies interpolation.
\end{lem}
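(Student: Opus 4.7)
The plan is to prove interpolation degree-by-degree, invoking upper semicontinuity of cohomology to reduce to exhibiting, at each degree $n$, a single effective divisor $D^*$ of degree $n$ realizing the desired vanishing. Set $n_0 := \lfloor \chi(E)/r \rfloor$ in the first case and $n_0 := \lceil \chi(E)/r \rceil$ in the second, and let $D_0$ be the given divisor of degree $n_0$. One direction is automatic from the exact sequence
\[ 0 \to E(-D'') \to E(-D') \to E|_{D''-D'} \to 0 \]
for effective divisors $D' \leq D''$, which yields a surjection on $H^1$ and an injection on $H^0$. Thus in the first case, for $n \leq n_0$ I would take $D^* \leq D_0$ of degree $n$, inheriting $h^1(E(-D^*)) = 0$; in the second case, for $n \geq n_0$ I would take $D^* \geq D_0$ of degree $n$, inheriting $h^0(E(-D^*)) = 0$.

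For the first case at degrees $n > n_0$, the hypothesis combined with Riemann--Roch gives $h^0(E(-D_0)) = 1$, so there is a unique-up-to-scalar section $\sigma \in H^0(E(-D_0))$. Since $\sigma$ is nonzero, the fiber value $\sigma(p) \in E|_p$ is nonzero for general $p \in C$, and the long exact sequence of $0 \to E(-D_0-p) \to E(-D_0) \to E|_p \to 0$ shows $h^0(E(-D_0 - p)) = 0$. Then any $D^* \geq D_0 + p$ of degree $n$ satisfies $h^0(E(-D^*)) = 0$, covering all $n \geq n_0 + 1$.

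The crux is the second case at degrees $n < n_0$; here $\chi(E(-D_0)) = -1$ forces $h^1(E(-D_0)) = 1$, and I need to locate an effective divisor $D^*$ of degree $n_0 - 1$ with $h^1(E(-D^*)) = 0$. By upper semicontinuity of $h^0$ I may replace $D_0$ by a general divisor of degree $n_0$ at which $h^0$ still vanishes, hence assume $D_0 = p_1 + \cdots + p_{n_0}$ is reduced. The hypothesis then says the evaluation map $H^0(E) \hookrightarrow E|_{D_0} = \bigoplus_i E|_{p_i}$ is injective, and by Riemann--Roch its image has codimension $1$; let $\ell = (\ell_{p_i})$ be a nonzero linear functional vanishing on the image. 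Some component $\ell_{p_j}$ is then nonzero, and tracing through the restriction maps identifies
\[ H^0(E(-(D_0 - p_j))) \cong \ker(\ell_{p_j}) \subseteq E|_{p_j}, \]
of dimension $r - 1$; by Riemann--Roch this forces $h^1(E(-(D_0-p_j))) = 0$. The easy direction of the first paragraph, applied to the new divisor $D_0 - p_j$ of degree $n_0 - 1$, then covers all remaining $n \leq n_0 - 1$. The only real obstacle is this second-case computation, which reduces to the linear-algebra fact that a nonzero functional on $\bigoplus_i E|_{p_i}$ has nonzero restriction to at least one summand.
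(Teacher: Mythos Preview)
Your proof is correct and follows essentially the same approach as the paper's: in both cases the key step is to pass from the given divisor $D_0$ to one of adjacent degree by adding a general point (when $h^0(E(-D_0))=1$) or removing a well-chosen point $p_j$ with $\ell_{p_j}\neq 0$ (when $h^1(E(-D_0))=1$), exactly as the paper does. The only difference is organizational: the paper invokes \cite[Proposition~4.6]{joint} to reduce to finding divisors at the two degrees $\lfloor\chi(E)/r\rfloor$ and $\lceil\chi(E)/r\rceil$, whereas you handle all degrees directly via sub- and super-divisors and semicontinuity, making your argument slightly more self-contained.
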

\begin{proof}
By \cite[Proposition 4.6]{joint} it suffices to find one effective divisor $D_+$ of degree $\left\lceil \frac{\chi(E)}{r} \right\rceil$ such that $h^0(E(-D_+))=0$ and one effective divisor $D_-$ of degree $\left\lfloor \frac{\chi(E)}{r} \right\rfloor$ such that $h^1(E(D_-)) = 0$.  By assumption we have one of these; we will show that we can construct the other by adding or subtracting a point.

Let $D$ be the effective divisor on $C$ specified in the statement of the lemma.  As $E$ is assumed to be nonspecial, the following sequence
\[0 \to H^0(E(-D)) \to H^0(E) \to E|_D \to H^1(E(-D)) \to 0 \]
is exact. 

Suppose first that $\chi(E) \equiv 1 \pmod{r}$, so we can take $D_- = D$. We take $D_+ = D + p$
for $p \in C$ general.
As $h^1(E(-D)) = 0$, we must have that $h^0(E(-D)) = 1$, and so $E(-D)$ has a unique global section.  As such a section does not vanish at a general point $p \in C$, the twist $h^0(E(-D-p)) = 0$ as desired.

Suppose next that we are in the case $\chi(E) \equiv -1 \pmod r$, so we can take $D_+ = D$.  Then 
\[H^0(E) \hookrightarrow E|_D \simeq \bigoplus_{p \in D} E|_p\]
is the inclusion of a hyperplane in a vector space of dimension $\chi(E) +1$.  As the subspaces $E|_p$ for $p \in D$ generate $E|_D$, there is some $p \in D$ such that $E|_p \not\subset H^0(E)$.  Therefore the composition with the projection map
\[H^0(E) \to E|_D \to E|_{D \smallsetminus p} \]
is surjective, and hence $h^1(E(-D+p)) = 0$.
\end{proof}

In the course of the proof of Theorem~\ref{main}, we will need the following more general definition of interpolation for the space of sections of a vector bundle.

\begin{defin}
Let $E$ be a vector bundle of rank $r$ on a curve $C$ and let $V \subset H^0(C, E)$ be some subspace of its space of global sections.  For points $p_1, \ldots, p_n \in C$, we will abbreviate 
\[ V(-p_1-\cdots -p_n) \colonequals \{ \sigma \in V : \sigma|_{p_1} = \cdots =  \sigma|_{p_n} = 0 \}.\]
Then we say that $V$ satisfies interpolation if for general points $p_1, \ldots, p_n \in C$, 
\[\dim V(-p_1-\cdots-p_n) = \max(0, \dim V - rn). \]
\end{defin}
\begin{rem}
With this definition, a vector bundle $E$ satisfies interpolation if and only if $\chi(E) = h^0(E)$ and its space of global sections $H^0(E)$ satisfies interpolation.
\end{rem}

\begin{lem}\label{h0_gluing}
Let $C = X \cup_\Gamma Y$ be a reducible nodal curve and let $E$ be a vector bundle on $C$.  Suppose the restriction map
\[\res_{X,\Gamma} \colon H^0(E|_X) \to E|_\Gamma \]
is injective.  If the space of sections
\[V \colonequals \{\sigma \in H^0(E|_Y) : \sigma|_\Gamma \in \Im(\res_{X,\Gamma}) \},\]
has dimension $\chi(E)$ and satisfies interpolation, then $E$ satisfies interpolation as well.
\end{lem}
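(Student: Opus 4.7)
The plan is to identify $H^0(C, E)$ with the subspace $V$ via projection onto the $Y$-factor, and then transfer the interpolation statement for $V$ directly to one for $E$ by choosing our general divisors to lie on $Y$.

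First, I would recall that a section of $E$ on $C = X \cup_\Gamma Y$ consists of a pair $(\sigma_X, \sigma_Y) \in H^0(E|_X) \oplus H^0(E|_Y)$ agreeing on $\Gamma$. The projection $(\sigma_X, \sigma_Y) \mapsto \sigma_Y$ is injective by the hypothesis that $\res_{X, \Gamma}$ is injective: if $\sigma_Y = 0$, then $\sigma_X|_\Gamma = 0$, which forces $\sigma_X = 0$. Its image is precisely $V$: any $\sigma_Y \in V$ has $\sigma_Y|_\Gamma \in \Im(\res_{X, \Gamma})$, and injectivity of $\res_{X, \Gamma}$ provides the unique compatible $\sigma_X$. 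Therefore $H^0(C, E) \cong V$, and the assumption $\dim V = \chi(E)$ gives $h^0(E) = \chi(E)$ and hence $h^1(E) = 0$, so $E$ is nonspecial.

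Next, for any effective divisor $D$ on $Y$ disjoint from $\Gamma$, the same projection identifies $H^0(E(-D))$ with $V(-D)$. Taking $D = p_1 + \cdots + p_n$ for $n$ general points $p_i \in Y$, interpolation for $V$ gives $\dim V(-D) = \max(0, \chi(E) - rn)$. A short case analysis shows this forces either $h^0(E(-D)) = 0$ (when $rn \geq \chi(E)$) or $h^0(E(-D)) = \chi(E) - rn = \chi(E(-D))$, which in turn forces $h^1(E(-D)) = 0$.

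Finally, to pass from general divisors supported on $Y$ to general effective divisors of degree $n$ on $C$, I would appeal to semicontinuity: the vanishing of $h^0$ or $h^1$ established above is an open condition in any family of divisors on $C$ specializing to our explicit divisor, so it extends to a general divisor in the appropriate component of the Hilbert scheme. I expect the main content of the argument to be the isomorphism $H^0(E(-D)) \cong V(-D)$, which is where the injectivity hypothesis on $\res_{X,\Gamma}$ is used; the semicontinuity step afterward is routine.
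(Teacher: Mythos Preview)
Your proof is correct and follows essentially the same approach as the paper: both establish the isomorphism $H^0(C,E)\cong V$ via restriction to $Y$, extend it to $H^0(E(-D))\cong V(-D)$ for divisors $D\subset Y\smallsetminus\Gamma$, and deduce interpolation for $E$ from interpolation for $V$. You are slightly more explicit than the paper in spelling out the nonspeciality, the case analysis, and the semicontinuity step needed to pass to general divisors on $C$, but these are routine and the core argument is the same.
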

\begin{proof}
By assumption we have that restriction to $Y$ gives an isomorphism
\begin{equation}\label{isom} H^0(C, E) \xrightarrow{\simeq} V. \end{equation}
For any divisor $D \subset Y \smallsetminus \Gamma$, the isomorphism \eqref{isom} restricts to an isomorphism $H^0(E(-D)) \to V(-D)$; therefore interpolation for $E$ follows from interpolation for $V$ and the fact that $\chi(E) = \dim V$. \end{proof}
\begin{cor}\label{h0_gluing_vb}
With the setup of Lemma \ref{h0_gluing}, if there exists a vector bundle $F$ on $Y$ with
\[H^0(F) = V, \qquad \chi(F) = \chi(E), \qquad \rk(F) = \rk(E), \]
then if $F$ satisfies interpolation as a vector bundle on $Y$, we have that $E$ satisfies interpolation as a vector bundle on $C$.
\end{cor}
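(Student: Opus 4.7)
The plan is to feed the hypotheses directly into Lemma \ref{h0_gluing}. That lemma already reduces interpolation for $E$ on $C$ to three conditions: injectivity of $\res_{X,\Gamma}$, the dimension equality $\dim V = \chi(E)$, and interpolation for the subspace $V \subset H^0(E|_Y)$. The first two are part of the setup being imported from Lemma \ref{h0_gluing}, so the only thing to check is interpolation for $V$ in the sense of the definition of interpolation for a space of sections given just before the lemma.

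To verify this, I would exploit the identification $V = H^0(F)$. Since $\dim H^0(F) = \dim V = \chi(F)$, the bundle $F$ is automatically nonspecial, so $h^1(F) = 0$. For general points $p_1, \ldots, p_n \in Y$, set $D = p_1 + \cdots + p_n$ and $r = \rk(F) = \rk(E)$. The subspace $V(-p_1 - \cdots - p_n) \subset V$ consists, tautologically, of the sections of $F$ vanishing along $D$, i.e.\ $V(-p_1 - \cdots - p_n) = H^0(F(-D))$. Interpolation for $F$ as a vector bundle on $Y$ guarantees that one of $h^0(F(-D))$ or $h^1(F(-D))$ vanishes; combined with the Euler characteristic identity $\chi(F(-D)) = \chi(F) - rn = \dim V - rn$, this forces
\[\dim V(-p_1 - \cdots - p_n) = \max(0, \dim V - rn),\]
which is exactly interpolation for $V$. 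Applying Lemma \ref{h0_gluing} then yields the corollary.

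I do not expect any real obstacle: the argument is a formal unpacking of how interpolation for a vector bundle $F$ translates, via the Euler-characteristic bookkeeping, into interpolation for its space of global sections, with the nonspeciality of $F$ playing the role of a sanity check rather than a substantive hypothesis to verify.
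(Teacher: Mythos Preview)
Your proposal is correct and is exactly the unpacking the paper leaves implicit (the paper states the corollary without proof, relying on the Remark that interpolation for a vector bundle is equivalent to $\chi = h^0$ together with interpolation for its space of global sections). One small wording issue: you assert $\dim V = \chi(F)$ before deducing nonspeciality, but the correct order is that interpolation for $F$ already includes $h^1(F)=0$, whence $h^0(F)=\chi(F)=\chi(E)$ gives $\dim V=\chi(E)$; this is a cosmetic fix, not a gap.
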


The main techniques in Sections \ref{canonical} -- \ref{quartic} will be degeneration to nodal curves containing lines which are 1-, 2-, or 3-secant to the remainder of the curve.  We give here some results on the bundle $F$ of Corollary \ref{h0_gluing_vb} which will be used there.

In order to deal with modified normal bundles we will set up the following notation.  Suppose that $X \to \pp^r$ is a nodal curve, and $L$ is a secant line to $X$ meeting $X$ in a set $S$.  Let $N_{X\cup_S L}'$ be a vector bundle on the union $X \cup_S L$ that is isomorphic to $N_{X \cup_S L}$ on some open subset $U \subset X \cup_S L$ containing all of $L$.
Then we will write $N_X'$ for the bundle on $X$ which is $N_{X \cup_S L}'|_{X\smallsetminus S}$ glued along $U$ to $N_{X \cup_S L}|_X$.

\begin{lem} \label{1-secant}
Let $X \to \pp^r$ be a curve and $p \in X$ a smooth point.  Let $L \subset \pp^r$ be a line through $p$ distinct from the tangent line to $X$ at $p$,
and denote by $C = X \cup_p L$ the nodal curve which is the union.  Let $x \in L$ be another point and suppose that $x' \in \pp^r$ is a point not contained
in the tangent plane to $C$ at $p$.  Then if the bundle 
\[N_X'(-1)(p)[p \to x][p \to x+x'] \]
satisfies interpolation (as a vector bundle on $X$), then $N_C'(-1)[x \to x']$ satisfies interpolation on $C$.
\end{lem}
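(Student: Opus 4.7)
\begin{skproof}
The plan is to apply Corollary~\ref{h0_gluing_vb} to the decomposition $C = L \cup_p X$, with $E := N_C'(-1)[x \to x']$ and the distinguished component $Y$ of the corollary taken to be $X$; the candidate replacement bundle on $X$ will be exactly the hypothesized $F := N_X'(-1)(p)[p \to x][p \to x+x']$.

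The first step is to compute $E|_L$. By Lemma~\ref{normal_bundle_nodal}, $E|_L = N_L(p)(-1)[p \to T_pX][x \to x']$. Since $L \simeq \pp^1$ and $N_L \simeq \O_L(1)^{r-1}$, we have $N_L(p)(-1) \simeq \O_L(1)^{r-1}$, and because $N_{L \to T_pX}$ is a sub-line bundle of degree $1$, the modification at $p$ produces $\O_L(1) \oplus \O_L^{r-2}$. The genericity assumption that $x' \notin \mathrm{span}(L, T_pX)$ rules out the one degeneration in which the $x'$-direction at $x$ coincides with the $T_pX$-direction, so the final modification yields the balanced splitting
\[ E|_L \simeq \O_L^2 \oplus \O_L(-1)^{r-3}. \]
Hence $h^1(E|_L) = 0$, $h^0(E|_L) = 2$, and $h^0(E|_L(-p)) = 0$, so that $H^0(E|_L) \hookrightarrow E|_p$ is injective with $2$-dimensional image $W$.

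On the $X$-side, $E|_X = N_X'(-1)(p)[p \to L]$ by Lemma~\ref{normal_bundle_nodal}. Since $L = \overline{px}$, the pointing fibers $N_{X \to L}|_p$ and $N_{X \to x}|_p$ coincide, so $E|_X = N_X'(-1)(p)[p \to x]$, and thus $F = E|_X[p \to x+x']$ is a further modification at $p$ towards the rank-$2$ subbundle $N_{X \to (x+x')}$. Since $N_{X \to x} \subset N_{X \to (x+x')}$ at $p$ (as $x$ lies on $\overline{xx'}$), this modification datum is tree-like and $F$ is a well-defined vector bundle. A degree count shows $\chi(F) = \chi(E|_X) - (r-3)$, while the exact sequence $0 \to E \to E|_X \oplus E|_L \to E|_p \to 0$ yields $\chi(E) = \chi(E|_X) + 2 - (r-1) = \chi(E|_X) - (r-3)$; so $\chi(F) = \chi(E)$, and clearly $\rk F = \rk E$.

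The remaining task, which is the main obstacle, is to verify $H^0(F) = V$, where $V := \{\sigma \in H^0(E|_X) : \sigma|_p \in W\}$. Both spaces are cut out inside $H^0(E|_X)$ by the condition that $\sigma|_p$ lie in a particular $2$-dimensional subspace of $E|_X|_p$: for $H^0(F)$, this subspace is the fiber of the transferred subbundle corresponding to $N_{X \to (x+x')}$, and for $V$, it is $W$ viewed in $E|_X|_p = E|_p$ via the identification of fibers at the node. One matches them by explicit local analysis: both contain the $1$-dimensional subspace arising from the pre-existing modification $[p \to L] = [p \to x]$ on $E|_X$, and both acquire a second dimension from the ``$x'$-direction'' at $p$---supplied on the $L$-side by $[x \to x']$ and on the $X$-side by $[p \to x+x']$. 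Once this identification is verified, Corollary~\ref{h0_gluing_vb} immediately gives interpolation for $E$.
\end{skproof}
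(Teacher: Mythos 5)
Your overall strategy is viable and is genuinely different from the paper's proof, which is a one-line application of Lemma~8.5 of \cite{joint} (taking $p_1 = x$, $\Lambda_1 = x'$, and absorbing the twist by $\O(-1)|_L$ as a modification at a second point of $L$ toward the empty linear space). Your preliminary steps are correct: $E|_L \simeq \O_L^{\oplus 2} \oplus \O_L(-1)^{\oplus(r-3)}$ under the hypothesis on $x'$, so $H^0(E|_L) \hookrightarrow E|_p$ with $2$-dimensional image $W$; the identification $E|_X \simeq N_X'(-1)(p)[p \to x]$ (the modification only depends on the pointing fiber at $p$, which is the direction of $L$); and the Euler characteristic match $\chi(F) = \chi(E|_X) - (r-3) = \chi(E)$, so Corollary~\ref{h0_gluing_vb} applies once $H^0(F) = V$.

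The gap is that this last equality --- which is the entire mathematical content of the lemma --- is asserted rather than proved: you write ``once this identification is verified,'' but the verification is never carried out. Concretely, you must show that, under the canonical identification of $(E|_L)\otimes k(p)$ and $(E|_X)\otimes k(p)$ with $E \otimes k(p)$ at the node, the subspace $W$ coincides with the fiber at $p$ of the transferred subbundle defining $[p \to x + x']$. This splits into two nontrivial checks. First, the value at $p$ of the section of the summand $N_{L \to T_pX}(-1)(p-x) \subset E|_L$ spans the ``polar'' line of the fiber, and this polar line on the $L$-side must be matched with the polar line $N_{X\to x}(-1)(p)|_p$ on the $X$-side; this is exactly the node-gluing statement of Lemma~8.4 of \cite{joint} (the same statement invoked in the proof of Lemma~\ref{mod_on_secant}), and it does not follow formally from Lemma~\ref{normal_bundle_nodal}, which only describes each restriction separately. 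Second, the value at $p$ of the section of the summand $N_{L\to x'}(-1) \subset E|_L$ must be identified, in the non-polar part of the fiber $N_L(-1)|_p/N_{L\to T_pX}(-1)|_p \simeq T_p\pp^r/\langle T_pX, L\rangle$, with the class of the $x'$-direction, which is what the extra modification $[p \to x+x']$ contributes on the $X$-side (here the hypothesis that $x'$ avoids the tangent plane at $p$ is what makes this class nonzero, not merely the splitting computation at $x$). Both checks are in fact true, so your argument can be completed --- either by a local computation at the node in the style of the explicit coordinate analysis in Lemma~\ref{mod_on_secant}, or by citing Lemma~8.4 of \cite{joint} --- but as written the proof stops exactly where the real work begins.
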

\begin{proof}
This follows from Lemma~8.5 of \cite{joint}
taking $p_1 = x$ and $\Lambda_1 = x'$;
and taking $p_2 = \O_C(1)|_L$ and $\Lambda_2 = \emptyset$.
\end{proof}
%

We now consider the case $r = 4$,
and prove three different degeneration results for the normal bundle $N_C$ and the twisted normal bundle $N_C(-1)$.

\begin{lem}\label{2-secant-easy}
Let $X \to \pp^4$ be a curve, $p$ and $q$ be smooth points of $X$, and $x'$ be a point in $\pp^r$.
Suppose that the tangent lines to $X$ at $p$ and $q$, together with $x'$,
span $\pp^4$.
Let $L \subset \pp^r$ be a $2$-secant line through $p$ and $q$.  Let $x$ and $y$ be points on $L$ distinct from $p$ and $q$.  Denote by $C$ the union $X \cup_{p,q} L$.   If the bundle
\[N_X'(-1)[p \to q][q\to p] \]
satisfies interpolation, then the bundle $N_C'(-1)(-y)[x \to x']$, and hence $N_C'(-1)[x \to x']$, satisfies interpolation.
\end{lem}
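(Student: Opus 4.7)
The plan is to apply Corollary~\ref{h0_gluing_vb} to $E \colonequals N_C'(-1)(-y)[x\to x']$ on $C = X \cup_{\{p,q\}} L$, with $L$ playing the role of the small component and $F \colonequals N_X'(-1)[p\to q][q\to p]$ as the proposed gluing bundle on $X$.  There are three things to verify: $h^0(E|_L) = 0$ (so the restriction to $E|_{\{p,q\}}$ is trivially injective), the glued subspace $V \subseteq H^0(E|_X)$ identifies with $H^0(F)$, and $\chi(F) = \chi(E)$ with both of rank $3$.  The main technical step is the first.

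For $h^0(E|_L) = 0$: by Lemma~\ref{normal_bundle_nodal} and the fact that the tangent line to $L$ at each of $p, q$ is $L$ itself, $N_C|_L \simeq N_L(p+q)[p\to p''][q\to q'']$ for $p'' \in T_pX$ and $q'' \in T_qX$.  Since $N_L \simeq \O_L(1)^{\oplus 3}$ and $\O_L(p+q-1-y) \simeq \O_L$, the bundle $E|_L$ is obtained from $\O_L(1)^{\oplus 3}$ by three modifications at $p, q, x$ toward the $1$-dimensional subspaces determined by $p'', q'', x'$, giving a bundle of rank $3$ and degree $-3$ on $\pp^1$.  After trivializing $N_L$ via a complementary $\pp^2 \subset \pp^4$ to $L$, a section of $\O_L(1)^{\oplus 3}$ is parameterized by six scalars, and the three modification conditions reduce to the single linear relation $\lambda v_p + \mu v_q = \nu v_x$ in $\cc^3$, where $v_p, v_q, v_x$ are the projections of $p'', q'', x'$ modulo $L$.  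This has only the trivial solution iff $v_p, v_q, v_x$ are linearly independent, iff $L$ together with $p'', q'', x'$ span $\pp^4$, iff $T_pX, T_qX, x'$ span $\pp^4$ --- precisely the hypothesis.

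Given the vanishing, Lemma~\ref{h0_gluing} yields $V = H^0(E|_X(-p-q))$.  Since the modifications $(-y)$ and $[x\to x']$ only affect $L$, $E|_X \simeq N_C'|_X(-1)$.  Applying Lemma~\ref{normal_bundle_nodal} on the $X$ side (using that the tangent line to $L$ at each node passes through the other node) gives the identification $N_C'|_X \simeq N_X'(p+q)[p\to q][q\to p]$, so commuting modifications past twists via Proposition~\ref{mod-prop}(2) yields $E|_X(-p-q) \simeq N_X'(-1)[p\to q][q\to p] = F$.  Ranks agree, and $\chi(E) = \chi(E|_X) + \chi(E|_L) - 2\rk(E) = \chi(E|_X) - 6 = \chi(F)$ (using $\chi(E|_L) = 0$).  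Corollary~\ref{h0_gluing_vb} now gives interpolation for $E$.

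The ``and hence'' conclusion follows from the genericity of $y$: any general effective divisor $D$ of positive degree is $y + D'$ with $D'$ general of smaller degree, so interpolation at $D$ for $N_C'(-1)[x\to x']$ reduces to interpolation at $D'$ for $E$; nonspeciality at the empty divisor is deduced from $h^1(E) = 0$ via the short exact sequence $0 \to E \to N_C'(-1)[x\to x'] \to N_C'(-1)[x\to x']|_y \to 0$.
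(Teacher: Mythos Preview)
Your proof is correct and follows the same overall strategy as the paper: apply Corollary~\ref{h0_gluing_vb} with $L$ as the small component, showing $E|_L$ has no sections and identifying $E|_X(-p-q)$ with $N_X'(-1)[p\to q][q\to p]$.

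The only genuine difference is in how you verify $h^0(E|_L)=0$.  The paper exploits the hypothesis to choose three hyperplanes $H_1,H_2,H_3$ through $L$ with $p'\in H_2\cap H_3$, $q'\in H_1\cap H_3$, $x'\in H_1\cap H_2$; this diagonalizes all three modifications simultaneously against the splitting $N_L\simeq\bigoplus N_{H_i}|_L$, yielding $E|_L\simeq\O_L(-1)^{\oplus 3}$ explicitly.  Your direct linear-algebra computation (trivialize $N_L(-1)$, parameterize sections of $\O_L(1)^{\oplus 3}$, reduce to independence of the three normal directions) is slightly more bare-hands but entirely valid and gets exactly what is needed.  The paper's method has the minor advantage of producing the splitting type rather than just the vanishing, and the hyperplane trick recurs elsewhere in the paper (e.g.\ the $3$-secant lemmas and \S\ref{13_10}); your approach is self-contained and avoids the auxiliary choice.

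One small remark: in your relation $\lambda v_p + \mu v_q = \nu v_x$ there is implicitly a nonzero scalar in front of $\mu$ coming from the coordinate of $x$ on $L$, but this is harmless since it does not affect linear independence.
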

\begin{proof}
Write $p'$ and $q'$ for choices of points on $T_p C$ and $T_q C$
distinct from $p$ and $q$ respectively.  By assumption $p'$, $q'$ and $x'$ span $\pp^4$, we may therefore choose $3$ independent hyperplanes $H_1$, $H_2$, and $H_3$ from the net defining the line $L$ such that $p' \in H_2 \cap H_3$, $q' \in H_1 \cap H_3$, and $x' \in H_1 \cap H_2$.  With the choice of the $H_i$, the normal bundle of $L$ is simply
\begin{equation}\label{ci_line} N_L \simeq N_{H_1}|_L \oplus N_{H_2}|_L \oplus N_{H_3}|_L. \end{equation}
Consider now the bundle
\[N_C'(-1)(-y)[x \to x']|_L
\simeq N_L(-1)(p+q-y)[p \to p'][q \to q'][x \to x']. \]
As $p' \in H_2 \cap H_3$, with respect to the decomposition in \eqref{ci_line}, the modification at $p$ towards $p'$ is simply a modification towards $N_{H_1}|_L$ at $p$.  Therefore this modification is the simple twist
\[N_L[p \to p'] \simeq N_{H_1}|_L \oplus N_{H_2}|_L(-p) \oplus N_{H_3}|_L(-p). \]
If we use the analogous expressions for the modifications at $q$ and $x$, we obtain the isomorphism
\begin{align*}
N_C&'(-1)(-y)[x \to x']|_L \\
&\simeq N_{H_1}|_L(-1)(p+q-y -q-x) \oplus N_{H_2}|_L(-1)(p+q-y-p-x) \oplus N_{H_3}|_L(-1)(p+q-y-p-q) \\
& \simeq \O_L(p-y -x) \oplus \O_L(q-y-x) \oplus \O_L(-y) \\
& \simeq \O_L(-1)^{\oplus 3}.
\end{align*}
As the bundle $\O(-1)$ on $\pp^1$ has no global sections, the restriction map to $p$ and $q$ is trivially injective and the desired bundle $F$ from Corollary \ref{h0_gluing_vb} is therefore
\[N_C'(-1)(-y)[x \to x']|_X (-p-q) \simeq N_X'(-1)[p \to q][q \to p], \]
which has the correct rank and Euler characteristic.
\end{proof}

\begin{lem}\label{3-secant}
Let $X$ be a curve in $\pp^4$ and let $L$ be a $3$-secant line meeting $X$ at smooth points $o, p, q$ such that the tangent lines $T_oX$, $T_pX$, and $T_qX$ span $\pp^4$.  Let $C \colonequals X \cup_{o +p+q} L$.  If $N_C'|_X$ satisfies interpolation, then $N_C'$ satisfies interpolation.
\end{lem}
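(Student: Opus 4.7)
The plan is to apply Corollary~\ref{h0_gluing_vb} with $F = N_X' = N_C'|_X$, so that the restriction map from $L$ to $\Gamma := \{o, p, q\}$ turns out to be an isomorphism and the hypothesis of interpolation on $X$ passes directly across the nodes. The crux is to compute $N_C'|_L$ explicitly and check that it has ``enough sections'' to surject onto the fiber at $\Gamma$.

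First, I would exploit the spanning hypothesis on the tangent lines to choose three hyperplanes $H_1, H_2, H_3 \subset \pp^4$ containing $L$ that separate the three tangent directions. Writing $o' \in T_o X$, $p' \in T_p X$, $q' \in T_q X$ for chosen points distinct from the nodes, the span assumption on $T_o X, T_p X, T_q X$ implies that $L, o', p', q'$ span $\pp^4$. Then take
\[H_1 = \mathrm{span}(L, p', q'), \qquad H_2 = \mathrm{span}(L, o', q'), \qquad H_3 = \mathrm{span}(L, o', p'),\]
so that $o' \in H_2 \cap H_3 \smallsetminus H_1$, and cyclically for $p'$ and $q'$. These yield a direct sum decomposition $N_L \simeq \bigoplus_{i=1}^3 N_{H_i}|_L$ with each summand isomorphic to $\O_L(1)$.

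Next, by Lemma~\ref{normal_bundle_nodal} and the same bookkeeping used in the proof of Lemma~\ref{2-secant-easy}, each modification in
\[N_C|_L = N_L(o + p + q)[o \to o'][p \to p'][q \to q']\]
points toward exactly one of the three summands (for example, $[o \to o']$ points toward $N_{H_1}|_L$) and so twists the other two summands by the relevant node. Tracking each $N_{H_i}|_L$ separately, I would find that each summand is twisted down by exactly two nodes out of three, starting from $\O_L(1)(o+p+q) = \O_L(4)$, leaving
\[N_C'|_L \simeq N_C|_L \simeq \O_L(2)^{\oplus 3}.\]

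Finally, $h^0(N_C'|_L) = 9 = \dim N_C'|_\Gamma$, and a section of $\O_L(2)$ vanishing at the three distinct points $o, p, q$ on $L \simeq \pp^1$ must vanish identically, so the restriction $H^0(N_C'|_L) \to N_C'|_\Gamma$ is an isomorphism. Consequently the space of sections $V$ in Lemma~\ref{h0_gluing} is cut out by no condition, i.e.\ equals $H^0(N_X')$. From the Mayer--Vietoris sequence
\[0 \to N_C' \to N_C'|_X \oplus N_C'|_L \to N_C'|_\Gamma \to 0\]
and $\chi(N_C'|_L) = \chi(N_C'|_\Gamma) = 9$, we get $\chi(N_C') = \chi(N_X')$, and Corollary~\ref{h0_gluing_vb} applied with $F = N_X'$ concludes the proof. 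The only non-formal step is the computation $N_C'|_L \simeq \O_L(2)^{\oplus 3}$, which rests critically on the assumption that the three tangent lines span $\pp^4$.
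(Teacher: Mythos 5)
Your proof is correct and follows essentially the same route as the paper: compute $N_C'|_L \simeq \O_L(2)^{\oplus 3}$ using the spanning hypothesis, observe that restriction to $o+p+q$ is an isomorphism on sections, and conclude via Corollary~\ref{h0_gluing_vb} with $F = N_C'|_X$. The only difference is that you spell out the $\O_L(2)^{\oplus 3}$ computation via the complete-intersection decomposition of $N_L$ (as in Lemma~\ref{2-secant-easy}), which the paper simply asserts from the tangent-direction assumption.
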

\begin{proof}
Write $o'$, $p'$, and $q'$ for points on $T_oX$, $T_pX$, and $T_qX$
respectively, distinct from $o$, $p$, and $q$ respectively.
By our assumption on the tangent directions, we have  
\[N_C'|_L \simeq N_L(o+p+q)[o \to o'][p \to p'][q \to q'] \simeq \O_L(2)^{\oplus 3}.\]
Restriction to the three points $o+p+q$ yields an isomorphism on global sections
\[H^0(\O_L(2))^{\oplus 3} \to \O_L(2)^{\oplus 3}|_{o + p+ q}. \]
Therefore the bundle $F$ of Corollary \ref{h0_gluing_vb} is simply $N_C'|_X$, which is
of the correct rank and Euler characteristic.
\end{proof}

\begin{lem}\label{3-secant-twist}
Let $X$ be a curve in $\pp^4$ and let $L$ be a $3$-secant line meeting $X$
at smooth points $o, p, q$ such that the tangent lines $T_oX$, $T_pX$,
and $T_qX$ span $\pp^4$.  Let $C \colonequals X \cup_{o +p+q} L$.
Let $x$ and $y$ be points on $L$ disjoint from $o$, $p$, and $q$.
If $N_C'(-1)|_X(-o-p-q)$ satisfies interpolation, then $N_C'(-1)(-x-y)$,
and hence $N_C'(-1)$, satisfies interpolation.
\end{lem}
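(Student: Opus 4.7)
The plan is to mirror the proof of Lemma~\ref{3-secant}, but incorporate the twist by $-x-y$ supported on $L$. Set $E := N_C'(-1)(-x-y)$. Using the computation $N_C'|_L \simeq \O_L(2)^{\oplus 3}$ established in the proof of Lemma~\ref{3-secant}, and noting that $x$ and $y$ are two distinct points on $L$ disjoint from $\{o,p,q\}$, we compute
\[E|_L \;\simeq\; \O_L(2)^{\oplus 3}(-1)(-x-y) \;\simeq\; \O_L(-1)^{\oplus 3},\]
which has no global sections. Meanwhile, since $x, y \notin X$, we have $E|_X = N_C'(-1)|_X$.

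The next step is to apply Corollary~\ref{h0_gluing_vb}, with $L$ and $X$ playing the roles of ``$X$'' and ``$Y$'' respectively. Because $H^0(E|_L) = 0$, the restriction map from $L$-sections to $E|_{o+p+q}$ is trivially injective, and the subspace $V \subseteq H^0(E|_X)$ of Lemma~\ref{h0_gluing} becomes the sections of $N_C'(-1)|_X$ vanishing at $o+p+q$, namely
\[V \;=\; H^0\bigl(N_C'(-1)|_X(-o-p-q)\bigr).\]
Setting $F := N_C'(-1)|_X(-o-p-q)$, we have $\rk F = 3 = \rk E$, and using the normalization sequence $0 \to E \to E|_X \oplus E|_L \to E|_{o+p+q} \to 0$,
\[\chi(E) \;=\; \chi(N_C'(-1)|_X) + \chi(\O_L(-1)^{\oplus 3}) - 9 \;=\; \chi(N_C'(-1)|_X) - 9 \;=\; \chi(F).\]
Hence Corollary~\ref{h0_gluing_vb} applies, and interpolation for $F$ (the hypothesis) implies interpolation for $E = N_C'(-1)(-x-y)$.

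For the ``hence'' clause, interpolation for $N_C'(-1)(-x-y)$ implies interpolation for $N_C'(-1)$: the short exact sequence $0 \to N_C'(-1)(-x-y) \to N_C'(-1) \to N_C'(-1)|_{x+y} \to 0$ yields a surjection on $H^1$, so $N_C'(-1)$ is nonspecial; and for every witness $D^{(0)}$ to a cohomology vanishing for $N_C'(-1)(-x-y)$, the divisor $D^{(0)} + x + y$ is a witness, of degree two higher, for the corresponding vanishing of $N_C'(-1)$. Invoking \cite[Proposition~4.6]{joint} at the two relevant divisor-degrees $\lfloor \chi(N_C'(-1))/3 \rfloor$ and $\lceil \chi(N_C'(-1))/3 \rceil$ completes the deduction. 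No new geometric subtleties arise beyond those already present in Lemma~\ref{3-secant}; the only care required is the Euler-characteristic bookkeeping and the appropriate shift of $F$ by $-o-p-q$ to compensate for the loss of sections on $L$ caused by the twist.
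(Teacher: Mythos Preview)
Your proof is correct and follows the same approach as the paper's: compute $N_C'(-1)(-x-y)|_L \simeq \O_L(-1)^{\oplus 3}$ and apply Corollary~\ref{h0_gluing_vb} to obtain $F = N_C'(-1)|_X(-o-p-q)$. You are simply more explicit than the paper about the Euler-characteristic check and the ``hence'' deduction; the paper leaves both as routine verifications.
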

\begin{proof}
Analogously as in Lemma \ref{3-secant}, our assumptions give 
\[N_C'(-1)|_L \simeq \O_L(1)^{\oplus 3}, \]
and so $N_C'(-1)(-x-y)|_L \simeq \O_L(-1)^{\oplus 3}$, which has no global sections.  Therefore, the bundle $F$ of Corollary \ref{h0_gluing_vb} is simply
$N_C'(-1)(-x-y)|_X(-o-p-q) \simeq N_C'(-1)|_X(-o-p-q)$,
which has the correct rank and Euler characteristic.
\end{proof}

\section{Reduction to Finite List}\label{finitelist}

In this section we reduce the proof of Theorem~\ref{main}
to a finite set of cases.  To make the notation less burdensome, we make the following definition:


\begin{defin}
A pair of integers $(d,g)$ is called a \defi{Brill-Noether} pair (BN-pair) if $\rho(d,g,4) \geq 0$.  Furthermore, we say that a BN-pair $(d,g)$ is \defi{good} if for a general BN-curve $C$ of degree $d$ and genus $g$, the twisted normal bundle $N_C(-1)$ satisfies interpolation.
\end{defin}

The main result is the following lemma:

\begin{lem}\label{cut_to_finite}
Let $(d,g)$ be a BN-pair which is not in $\{(6,2), (8,5), (9,6), (10,7) \}$.  Then $(d,g)$ is good provided that all pairs $\{(9,5), (11,8), (12,10), (13,10), (13,11), (14,12) \} $ are good.
\end{lem}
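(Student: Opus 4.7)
The plan is to degenerate a general BN-curve of degree $d$ and genus $g$ in $\pp^4$ to the nodal union $C = X \cup_\Gamma Y$, where $X \subset H \simeq \pp^3$ is a BN-curve of degree $d_1$ and genus $g_1$ in a hyperplane $H$, $Y \subset \pp^4$ is a nonspecial BN-curve of degree $d_2 \geq g_2 + 4$ and genus $g_2$ transverse to $H$, and the two components are glued at a subset $\Gamma$ of size $n$ of the $d_2$ transverse intersection points in $H$, so that $d = d_1 + d_2$ and $g = g_1 + g_2 + n - 1$. By Lemma~\ref{normal_bundle_nodal}, the restriction of $N_C(-1)$ to $X$ (respectively $Y$) is a modification of $N_X(-1)$ (respectively $N_Y(-1)$) at the glue points; interpolation for the first is the $\pp^3$ case handled in \cite{vogt}, and for the second is the nonspecial case handled in \cite{joint}. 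These can be assembled into interpolation for $N_C(-1)$ on $C$ via Corollary~\ref{h0_gluing_vb}, applied to a vector bundle $F$ on $Y$ obtained by modifying $N_Y(-1)$ at each point of $\Gamma$ toward the pointing subbundle $N_{Y \to H}$.

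For each BN-pair $(d,g)$ outside the exceptional list $\{(6,2),(8,5),(9,6),(10,7)\}$ and the finite base list, the core task is to exhibit a splitting $(d_1, g_1, d_2, g_2, n)$ satisfying all the necessary constraints simultaneously: $\rho(d_1, g_1, 3) \geq 0$ so that $X$ is a BN-curve in $\pp^3$; $d_2 \geq g_2 + 4$ so that $Y$ is nonspecial; $n \leq \min(d_1, d_2)$ so that the glue points can be chosen general; and the smoothing of $C$ lies in the main component $M_g(\pp^4,d)^\circ$. A natural ansatz is to take $g_2 = 0$ (so $Y$ is a rational curve in $\pp^4$ of degree $d_2 \geq 4$), absorb the genus into $X$ with $g_1 = g - n + 1$, and then choose $d_1$ minimally so that the BN inequality in $\pp^3$ is satisfied. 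The resulting constraint then reduces to a single linear inequality in $(d,g,n)$, which admits a solution for all sufficiently large BN-pairs.

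The main obstacle I anticipate is the combinatorial verification near the boundary of the BN-locus. One must check that, for every BN-pair not in the union of the exceptional and base lists, some integer solution $(d_1, g_1, d_2, g_2, n)$ to the above constraints exists --- and, implicitly, that for each of the six pairs $(9,5)$, $(11,8)$, $(12,10)$, $(13,10)$, $(13,11)$, $(14,12)$, no such splitting is available, so that a genuinely different degeneration is required. I expect the case analysis to split into two regimes: in the bulk regime where $\rho(d,g,4)$ is large enough, the rational-$Y$ ansatz $g_2 = 0$ succeeds directly; in the narrow strip where $\rho$ is small or $d$ is near $(4g+20)/5$, one may have to allow $g_2 > 0$ or vary $n$ carefully to cover all remaining $(d,g)$. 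The six residual pairs are then reserved for the more refined canonical-curve and degenerate-quartic degenerations of Sections~\ref{canonical} and~\ref{quartic}.
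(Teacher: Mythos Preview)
Your high-level strategy matches the paper's: degenerate to $X \cup_\Gamma Y$ with $X \subset H \simeq \pp^3$ and $Y$ transverse to $H$. But two aspects of your plan do not work as stated, and fixing them forces you into the paper's actual argument.

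First, your gluing step is not correct. The object ``$N_{Y \to H}$'' is not defined when $H$ is a hyperplane: every tangent line of $Y$ meets $H$, so the open set $U_{Y,H}$ on which the pointing bundle would be constructed is empty. More substantively, the paper does not reduce to a modified bundle on $Y$ via Corollary~\ref{h0_gluing_vb}. Instead, Lemma~\ref{in_transverse} (relying on \cite[Lemma~2.7]{quadrics}) handles the piece $N_C(-1)|_X$ through the sequence $0 \to N_{X/H} \to N_{X/\pp^4} \to \O_X(1) \to 0$, reducing the desired vanishing to three separate vanishings --- one for $N_{X/H}(-1)$, one for a line bundle on $X$, and one for $N_Y(-1)$. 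For the line-bundle piece to have no cohomology one needs it to be general of degree $g_1 - 1$, which forces the rigid relation $g_1 = 2n + 1 - d_1$ among the genus and degree of $X$ and the number of glue points. Your free choice of $(d_1,g_1,n)$ is not available.

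Second, the hypothesis on $Y$ is only that $N_Y(-1)$ satisfy interpolation, which is strictly weaker than $Y$ being nonspecial (or having $d_2 \geq 2g_2$). The paper exploits this by running an \emph{induction} on $(d,g)$: the $Y$'s used in the final table include $(10,6)$, $(11,7)$, $(11,8)$, $(12,8)$, which are not nonspecial but are good either by an earlier row of the table or by the assumed base list. Concretely, with the constraint $g_1 = 2n+1-d_1$ and your restriction $g_2 = 0$, one has $d_1 = 3n - g$ and $d_2 = d + g - 3n$; the requirement $n \leq d_2$ (since $\Gamma \subset Y \cap H$ has at most $d_2$ points) together with the $\pp^3$ Brill--Noether bound forces $\lceil (7g+15)/15 \rceil \leq n \leq \lfloor (d+g)/4 \rfloor$. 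This interval is empty not only for the six base pairs but also, e.g., for $(d,g) = (15,13)$ and $(16,14)$. The paper's organization avoids this: attach a $(9,6)$-curve in $H$ at $n=7$ points to reduce to $g \leq 14$ (Lemma~\ref{dplus9}, where showing the union is a BN-curve requires a further degeneration of $X$); attach a twisted cubic at one point to reduce each $g$ to three consecutive values of $d$ (Lemma~\ref{3ds}); then fill in a short table using $X \in \{(6,3),(7,4),(8,5)\}$ and previously-established $Y$.
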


In order to prove this result, we will leverage the interpolation result in $\pp^3$ \cite[Theorem 1.1]{vogt} and in the nonspecial range in $\pp^4$ \cite{joint}.  More specifically, we specialize to a reducible curve $X \cup_\Gamma Y$, where $X$ is contained in a hyperplane
$H \simeq \pp^3$, and $Y$ is transverse to the hyperplane and meets $X$ in a finite set of
points $\Gamma$ which are general in $H$.  The key result here is the following:

\begin{lem}\label{in_transverse}
Let $s$ be a positive integer, and
$X \to H$ be a BN-curve of degree $d'' \geq s + 1$
and genus $g'' := 2s + 1 - d''$
in a hyperplane $H \subset \pp^4$.  Let $\Gamma$ be a finite set of $s$ general points on $X$.  Let $Y$ be a BN-curve of degree and genus $(d', g')$ passing through $\Gamma$ with tangent direction transverse to the hyperplane $H$.  Let $C = X \cup _{\Gamma} Y$
(which by construction is of degree $d = d' + d''$ and genus $g = g' + 3s - d''$).
If both $N_{X/H}(-1)$ and $N_Y(-1)$ satisfy interpolation, then so does $N_{C}(-1)$.
\end{lem}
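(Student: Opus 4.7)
The plan is to apply the gluing framework of Lemma~\ref{h0_gluing}/Corollary~\ref{h0_gluing_vb} to the decomposition $C = X \cup_\Gamma Y$, combining the assumed interpolation for $N_{X/H}(-1)$ on $X \subset H \simeq \pp^3$ with the assumed interpolation for $N_Y(-1)$ on $Y$. Applying Lemma~\ref{normal_bundle_nodal} gives
\[N_C(-1)|_X \simeq N_X(-1)(\Gamma)[\Gamma \to T_Y], \qquad N_C(-1)|_Y \simeq N_Y(-1)(\Gamma)[\Gamma \to T_X].\]
The key observation is that, since $Y$ meets $H$ transversely at each $p_i \in \Gamma$, the tangent direction $T_{p_i}Y$ lies outside $H$ and defines a line in $N_X|_{p_i}$ transverse to the $2$-dimensional subspace $N_{X/H}|_{p_i}$, while mapping isomorphically onto $\O_X(1)|_{p_i}$. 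Twisting the standard exact sequence $0 \to N_{X/H} \to N_X \to \O_X(1) \to 0$ by $(-1)(\Gamma)$ and tracking the modification, I would establish the key structural sequence
\[0 \to N_{X/H}(-1) \to N_C(-1)|_X \to \O_X(\Gamma) \to 0.\]

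With this in hand, I would use interpolation for $N_{X/H}(-1)$ together with the genericity of $\Gamma$ on $X$ --- under which the hypothesis $d'' \geq s + 1$ forces $g'' \leq s$ and thus $\O_X(\Gamma)$ is nonspecial --- to compute $h^0$ and $h^1$ of $N_C(-1)|_X$ and to identify the image $W$ of the restriction map $H^0(N_C(-1)|_X) \to N_C(-1)|_\Gamma$. Then, on the $Y$-side, I would identify a rank-$3$ vector bundle $F$ on $Y$ whose global sections realize the subspace of $H^0(N_C(-1)|_Y)$ with values at $\Gamma$ lying in $W$, and whose rank and Euler characteristic match those of $N_C(-1)$. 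Interpolation for $F$ should in turn follow from interpolation for $N_Y(-1)$ by a specialization argument modeled on Proposition~\ref{limit}, after which Corollary~\ref{h0_gluing_vb} yields interpolation for $N_C(-1)$.

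The main obstacle is the bookkeeping required to identify $F$ and to verify the hypotheses of the gluing criterion. A direct computation from the structural sequence gives $h^0(N_C(-1)|_X(-\Gamma)) = 2d'' - 2s + 1 > 0$, so the restriction $\res_{X,\Gamma}$ is not injective and Lemma~\ref{h0_gluing} cannot be applied naively; moreover, the subspace $V \subset H^0(N_C(-1)|_Y)$ compatible with the image from $X$ has dimension $2d' + d'' - g' - s$, short of $\chi(N_C(-1)) = 2d' + 3d'' - g' - 3s + 1$ by exactly $2d'' - 2s + 1$. Reconciling this numerical gap will require either restricting to the $N_{X/H}(-1)$-summand of $H^0(N_C(-1)|_X)$ before gluing, or refining the argument so that the ``missing'' sections --- precisely those coming from the nontrivial $H^0(\O_X)$ contribution in the long exact sequence of the structural SES twisted by $-\Gamma$ --- are absorbed into a suitable modification of $F$; this is the technical crux of the lemma, and once $F$ is identified the Euler-characteristic check is routine via Riemann--Roch.
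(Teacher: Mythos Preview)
Your approach via Lemma~\ref{h0_gluing}/Corollary~\ref{h0_gluing_vb} is \emph{not} what the paper does, and the obstacle you flag is real and unresolved. The paper never attempts to glue sections directly; instead it twists $N_C(-1)$ down by a divisor $E + F$ with $E$ general of variable degree on $Y$ \emph{and} $F$ general of degree $d'' - s$ on $X$, then invokes a black-box result (Lemma~2.7 of \cite{quadrics}) which reduces the vanishing of $h^i(N_C(-1)(-E-F))$ to three independent vanishings:
\[
h^i(N_{X/H}(-1)(-\Gamma - F)) = 0, \qquad h^i(\O_X(\Gamma - F)) = 0, \qquad h^i(N_Y(-1)(-E)) = 0.
\]
The first holds because $\Gamma + F$ is general of degree $d'' = \chi(N_{X/H}(-1))/2$ and $N_{X/H}(-1)$ satisfies interpolation; the second because $\O_X(\Gamma - F)$ is general of degree $g'' - 1$ (here $d'' \geq s + 1$ is used to ensure $\Gamma + F$ has degree at least $g''$); the third by interpolation for $N_Y(-1)$.

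The gap in your proposal is precisely the one you name: because $h^0(N_C(-1)|_X(-\Gamma)) = 2d'' - 2s + 1 > 0$, Lemma~\ref{h0_gluing} does not apply, and you have not produced the bundle $F$ on $Y$ or shown it satisfies interpolation. Your suggested workarounds (``restricting to the $N_{X/H}(-1)$-summand'' or ``absorbing the missing sections into a modification of $F$'') are vague, and it is not clear either can be made to work: there is no canonical splitting of the structural sequence, and a rank-$3$ bundle on $Y$ cannot absorb $2d'' - 2s + 1$ extra sections without changing its Euler characteristic. The honest fix is to kill those extra sections \emph{before} gluing by twisting down by $d'' - s$ general points on $X$ --- after which both $N_{X/H}(-1)(-\Gamma - F)$ and $\O_X(\Gamma - F)$ have vanishing cohomology and the $X$-side contributes nothing. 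But once you do that, you are essentially re-deriving the cited lemma from \cite{quadrics}, and the $Y$-side bundle is just $N_Y(-1)(-E)$, with no modifications needed at all.
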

\begin{proof}
Note that
\[\frac{\chi(N_{C}(-1))}{\rk N_C(-1)} = \frac{2d + 1 - g}{3} = d'' - s + \frac{2d' + 1 - g'}{3} \geq d'' - s.\]
Let $E$ and $F$ be general effective divisors on $Y$ and $X$ respectively,
of degrees $e$ and $f = d'' - s$ respectively.
We will show that if $e \leq \frac{2d' + 1 - g'}{3}$, then $h^1(N_C(-1)(-E-F)) = 0$; and if $e \geq \frac{2d' + 1 - g'}{3}$, then $h^0(N_C(-1)(-E-F)) = 0$.

By Lemma 2.7 of \cite{quadrics}, to show the appropriate vanishing of $h^i$ it suffices to check the three vanishings
\begin{equation}\label{hi}
h^i(N_{X/H}(-1)(-\Gamma-F)) = 0, \qquad h^i(\O_X(\Gamma - F)) = 0, \qquad h^i(N_Y(-1)(-E)) =0.
\end{equation}
In the first case, $N_{X/H}(-1)$ satisfies interpolation by assumption, and so $N_{X/H}(-1)(-\Gamma - F)$ has no cohomology as $\Gamma + F$ is general of degree $d''$.  In the second case, $\O_X(\Gamma - F)$ is a general line bundle of degree $g'' - 1$ (since $d'' \geq s + 1$ implies $s + (d'' - s) \geq 2s + 1 - d'' = g''$), and so also has no cohomology by Riemann-Roch.  Finally, the last vanishing is exactly the statement that $N_Y(-1)$ satisfies interpolation.
\end{proof}
To find suitable $X$ and $Y$ as above, we use the following elementary lemma, which follows from the fact that if the degree of a line bundle $L$ is at least $d+g$, then $L(-1)$ is effective.
\begin{lem}
Let $E$ be a vector bundle of rank $k$ satisfying interpolation on a curve $C \in \pp^r$ of degree $d$ and genus $g$.  If 
\[\chi(E) \geq k(d + g),\]
then $E(-1)$ also satisfies interpolation.
\end{lem}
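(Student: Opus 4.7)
The plan is to reduce interpolation for $E(-1)$ at a general effective divisor $F$ of degree $m$ to interpolation for $E$ at a general effective divisor $D$ of degree $n := d + m$, using the identification $E(-1)(-F) \cong E(-D)$ whenever $\O_C(D) \cong \O_C(F)(1)$. The hint provides exactly what is needed: if $\deg D \geq d + g$, then $\O_C(D)(-1)$ has degree $\geq g$ and is effective by Riemann--Roch, so $\O_C(D) \cong \O_C(F)(1)$ for some effective $F$ of degree $\deg D - d$.

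For $m \geq g$, I consider the incidence variety $Z = \{(F, D) \in C_m \times C_n : \O_C(D) \cong \O_C(F)(1)\}$, whose projection to $C_m$ has fibers equal to complete linear systems of dimension $d + m - g$. A dimension count gives $\dim Z = m + (d + m - g)$, so the other projection $\pi_n : Z \to C_n$ is dominant exactly when $m \geq g$. Pulling back the dense open $U_n \subset C_n$ on which interpolation for $E$ holds, and projecting to $C_m$ via Chevalley's theorem, yields a dense open $U'_m \subset C_m$ such that for every $F \in U'_m$ there exists $D \in U_n$ with $\O_C(D) \cong \O_C(F)(1)$. Since the Euler characteristics match, $\chi(E(-1)(-F)) = \chi(E(-D))$, so the direction of the interpolation conclusion (either $h^0 = 0$ or $h^1 = 0$) transfers directly to $E(-1)(-F)$.

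When $m < g$, the hypothesis $\chi(E) \geq k(d+g)$ forces $\chi(E(-1)(-F)) \geq k(g-m) > 0$, so interpolation reduces to showing $h^1(E(-1)(-F)) = 0$. I extend $F$ to $F' = F + F''$ with $F''$ general effective of degree $g - m$, so $F'$ is general in $C_g$; the previous case gives $h^1(E(-1)(-F')) = 0$. The short exact sequence
\[0 \to E(-1)(-F') \to E(-1)(-F) \to E(-1)|_{F''} \to 0\]
has torsion quotient with vanishing $H^1$, so the long exact sequence gives a surjection $H^1(E(-1)(-F')) \twoheadrightarrow H^1(E(-1)(-F))$, forcing the vanishing. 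Nonspecialness of $E(-1)$ is the case $m = 0$ (or, if $g = 0$, an instance of the first case).

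The main technical point is the dimension count in the first step: one must distinguish ``general in a complete linear system $|H + F|$'' from ``general in the symmetric product $C_n$,'' and the inequality $m \geq g$ is precisely what makes the incidence variety project dominantly onto $C_n$ so that the locus of generic $D$ for $E$ pulls back to a dense open of $C_m$ for $E(-1)$.
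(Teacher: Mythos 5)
Your argument is correct, but it takes a genuinely different (and longer) route than the paper, which offers no written proof beyond the remark that the lemma follows from the effectivity of $L(-1)$ whenever $\deg L \geq d+g$. The implicit argument behind that remark runs in the opposite direction to yours: one reduces interpolation for $E(-1)$ to exhibiting a \emph{single} effective divisor in the critical degree(s) (the reduction used in Lemma~\ref{check_one}, i.e.\ \cite[Proposition 4.6]{joint}), takes $D$ general of degree $d+m$ with $m \geq \lfloor \chi(E(-1))/k\rfloor \geq g$, so that interpolation for $E$ gives the required vanishing for $E(-D)$ (the sign of $\chi(E(-D))$ forces which of $h^0$, $h^1$ vanishes), and then writes $\O_C(D) \cong \O_C(1)(F)$ with $F$ effective of degree $m$ by Riemann--Roch, whence $E(-1)(-F) \cong E(-D)$; no genericity of $F$ is needed and no case division on $m$ arises. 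You instead verify the definition of interpolation directly at a general $F$ of every degree, which is what forces your Abel--Jacobi/incidence argument for $m \geq g$ and the residuation argument for $m < g$; this is more self-contained (it never invokes the one-divisor criterion) but one step should be tightened: dominance of $\pi_n \colon Z \to C_n$ does not by itself imply that $\pi_m(\pi_n^{-1}(U_n))$ contains a dense open of $C_m$, since $Z$ may have extra components over the locus where $\O_C(F)(1)$ is special. This is easily repaired either by restricting to the main component of $Z$ (the projective bundle over the open locus where $\O_C(F)(1)$ is nonspecial, which is irreducible and dominates both factors), or, more cleanly, by working in $\Pic^{d+m}(C)$: the vanishing for $E(-D)$ depends only on the class of $\O_C(D)$, the classes of divisors in $U_n$ fill a dense constructible subset of $\Pic^{d+m}(C)$, and for $m \geq g$ the class $\O_C(F)(1)$ of a general $F \in C_m$ is general in $\Pic^{d+m}(C)$.
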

\begin{cor}\label{basecases}
If $C$ is a general BN-curve in $\pp^3$ (resp. $\pp^4$) of degree $d$ and genus $g$ with $d \geq g$ (resp. $d \geq 2g$), then 
\[N_C \text{ satisfies interpolation } \qquad \Leftrightarrow \qquad N_C(-1) \text{ satisfies interpolation.} \]
\end{cor}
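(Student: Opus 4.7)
The plan is a direct application of the preceding lemma in both directions, after verifying that its numerical hypothesis matches the stated condition on $(d, g)$.

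First I would compute the Euler characteristic of $N_C$. Using $\rk N_C = r - 1$ and $\deg N_C = (r+1)d + 2g - 2$, Riemann--Roch gives $\chi(N_C) = 4d$ in $\pp^3$ (rank $k = 2$), and $\chi(N_C) = 5d - g + 1$ in $\pp^4$ (rank $k = 3$). The hypothesis $\chi(N_C) \geq k(d+g)$ of the preceding lemma then reduces to $d \geq g$ in $\pp^3$, and to $2d \geq 4g - 1$ in $\pp^4$, which for integers is equivalent to $d \geq 2g$. These match the corollary's hypotheses exactly, so the forward direction ($N_C$ interpolates $\Rightarrow N_C(-1)$ interpolates) is immediate from the lemma with $E = N_C$.

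For the reverse implication, one reruns the lemma's core observation --- that every line bundle of degree $\geq d + g$ is of the form $\O_C(1) \otimes \O_C(D')$ for some effective $D'$ --- in the opposite direction. Concretely, interpolation of $N_C(-1)$ at degree $n' = n - d$ provides, for a general effective $D'$ of degree $n'$ and any $H \in |\O_C(1)|$, vanishing of $h^0$ or $h^1$ of $N_C(-1)(-D') \cong N_C(-D' - H)$. Because $n' \geq g$ under our hypothesis, the map $\Sym^{n'}(C) \times |\O_C(1)| \to \Sym^n(C)$ given by $(D', H) \mapsto D' + H$ hits every linear equivalence class of effective divisors of degree $n$, and hence by upper semi-continuity of $h^0$ and $h^1$ on $\Sym^n(C)$ the vanishing extends to a dense open subset --- giving interpolation of $N_C$ at its critical degree. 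Interpolation at smaller degrees follows by restriction to general sub-divisors, since $h^1$-vanishing at a divisor $D'$ implies $h^1$-vanishing at any $D \leq D'$ via the short exact sequence $0 \to N_C(-D') \to N_C(-D) \to N_C(-D)|_{D' - D} \to 0$.

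The only real subtlety --- more careful bookkeeping than substantive obstacle --- is to verify that the subfamily $\{D' + H\} \subset \Sym^n(C)$ meets the open interpolation locus, which is precisely where the slack $\chi(N_C)/k \geq d + g$ afforded by our numerical hypothesis is used.
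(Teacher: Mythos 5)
Your proposal is correct, and the forward implication is exactly the paper's (implicit) argument: compute $\chi(N_C) = 4d$ in $\pp^3$ and $5d - g + 1$ in $\pp^4$, check that $\chi(N_C) \geq \rk(N_C)\cdot(d+g)$ is equivalent to $d \geq g$, resp.\ $d \geq 2g$ (your parity remark for $2d \geq 4g-1$ is right), and apply the preceding lemma with $E = N_C$. Where you diverge is the reverse implication: the paper does not reprove it, since ``$N_C(-1)$ satisfies interpolation $\Rightarrow$ $N_C$ satisfies interpolation'' holds with no numerical hypothesis at all (this is Proposition~4.11 of \cite{joint}, invoked later in the proof of Theorem~\ref{notwist}), so the degree bound in the corollary is really only doing work in the forward direction. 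Your hand-made reverse argument is nonetheless sound, but it is more elaborate than necessary: since $h^i(N_C(-D))$ is upper semicontinuous in $D$ and $\Sym^n(C)$ is irreducible, exhibiting the single divisor $D' + H$ (with $D'$ general of degree $n-d$ and $H$ a hyperplane section) with the appropriate vanishing already forces the same vanishing for a general divisor of degree $n$; the step about the addition map hitting every linear equivalence class, which is where you use $n - d \geq g$, is a harmless detour rather than a needed input. Two small bookkeeping points: interpolation also requires the $h^0$-vanishing to propagate to degrees above the critical one (dual to your sub-divisor argument, via $N_C(-D'') \subseteq N_C(-D)$, and equally trivial), and you should note $h^1(N_C) = 0$ follows from $h^1(N_C(-1)) = 0$ via $0 \to N_C(-H) \to N_C \to N_C|_H \to 0$; neither affects the validity of the argument.
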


We will now inductively prove Lemma \ref{cut_to_finite} by attaching curves $X \in \pp^3$ to curves $Y \in \pp^4$, for which $N_Y(-1)$ is already known to satisfy interpolation.  We will only use the following curves for $X$:
\begin{center}
\begin{tabular}{c | c | c | c}
$d''$ & $g''$ & $s$ & $\Delta(\rho)$ \\ \hline
9 & 6 & 7 & -3 \\
3 & 0 & 1 & 15\\
6 & 3 & 4 & 6\\
7 & 4 & 5 & 3\\
8 & 5 & 6 & 0
\end{tabular}
\end{center}
In order to conclude from Lemma \ref{in_transverse} that for a general $C \to \pp^4$ of degree $d' + d''$ and genus $g' + 3s - d''$, the bundle $N_C(-1)$ satisfies interpolation, we must show that $X \cup_{\Gamma} Y$ is a BN-curve.  In all of our cases this ends up being easily deduced from previous results.

First we use $X = (9,6)$ to reduce to checking small genus:

\begin{lem}\label{dplus9}
If a BN-pair $(d',g')$ is good and $d' \geq 7$, then the pair $(d' + 9, g' + 12)$ is good (when it is BN).
\end{lem}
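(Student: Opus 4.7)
The plan is to apply Lemma~\ref{in_transverse} to a reducible degeneration, taking $X$ to be a general Brill--Noether curve of degree $d'' = 9$ and genus $g'' = 6$ in a hyperplane $H \simeq \pp^3 \subset \pp^4$, choosing a set $\Gamma$ of $s = 7$ general points on $X$, and letting $Y$ be a general Brill--Noether curve of degree $d'$ and genus $g'$ in $\pp^4$ meeting $H$ transversely at $\Gamma$. The resulting curve $C \colonequals X \cup_\Gamma Y$ then has degree $d' + d'' = d' + 9$ and genus $g' + 3s - d'' = g' + 12$, as required.

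The numerical hypotheses of Lemma~\ref{in_transverse} are immediate: $d'' = 9 \geq s + 1 = 8$, and $g'' = 2s + 1 - d'' = 6$. For the interpolation inputs, a general Brill--Noether curve of degree $9$ and genus $6$ in $\pp^3$ has $N_{X/H}$ satisfying interpolation by \cite[Theorem 1.1]{vogt}, and since $d'' = 9 \geq g'' = 6$, Corollary~\ref{basecases} upgrades this to interpolation for $N_{X/H}(-1)$. Interpolation for $N_Y(-1)$ is precisely the hypothesis that $(d', g')$ is good. Lemma~\ref{in_transverse} therefore yields interpolation for $N_C(-1)$.

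The main obstacle is to verify that $C$ lies in the closure of the component $M_g(\pp^4, d)^\circ$, so that the general smoothing is a genuine Brill--Noether curve of degree $d' + 9$ and genus $g' + 12$ to which interpolation then propagates by semicontinuity. The hypothesis $d' \geq 7$ enters precisely here: the assumption that $\rho(d' + 9, g' + 12, 4) \geq 0$ is equivalent to $\rho(d', g', 4) \geq 3$, and combined with $d' \geq 7$ this makes the moduli of BN-curves of degree $d'$ and genus $g'$ in $\pp^4$ of dimension at least $3s = 21$, so that the interpolation hypothesis on $N_Y(-1)$ allows $Y$ to be moved to meet $H$ transversely at seven specified general points of $H$, and in particular at the general set $\Gamma$. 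That the smoothing of $X \cup_\Gamma Y$ lies in the Brill--Noether component is then a standard parameter count of the type used in the analogous $\pp^3$ construction of \cite{quadrics}, using that $X$ (viewed in $\pp^4$ via $H$) is a BN-curve since $\rho(9, 6, 4) = 1 \geq 0$ and a general BN-curve in $\pp^3$ has general moduli, together with the BN assumption on $Y$.
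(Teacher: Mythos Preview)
Your overall setup is exactly the paper's: apply Lemma~\ref{in_transverse} with $X$ a general BN-curve of degree $9$ and genus $6$ in $H \simeq \pp^3$, with $s = 7$, and $Y$ a general BN-curve of type $(d',g')$. The numerical checks and interpolation inputs are fine.

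The gap is in your last paragraph. Verifying that $C = X \cup_\Gamma Y$ is a BN-curve is not a ``standard parameter count''; it is in fact the entire substance of the paper's proof of this lemma. Note that for the other curves $X$ used in Section~\ref{finitelist} (of types $(6,3)$, $(7,4)$, $(8,5)$), the paper can invoke Theorem~1.9 of \cite{rbn} directly to conclude BN-ness of the union; for $X$ of type $(9,6)$ with $7$ attachment points this result does not apply as stated, which is precisely why Lemma~\ref{dplus9} gets its own proof rather than a line in the table. Your appeal to ``$X$ viewed in $\pp^4$ is a BN-curve since $\rho(9,6,4) \geq 0$'' is also not to the point: $X$ is degenerate in $\pp^4$, and lying in a hyperplane is a closed, non-generic condition in $M_6(\pp^4,9)^\circ$, so this does not help identify the component containing the union.

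What the paper actually does: it further degenerates $X$ to $X^\circ = X_1 \cup_\Delta X_2$, where $X_1 \subset H$ is a rational normal cubic carrying $4$ of the points of $\Gamma$, and $X_2 \subset H$ is a canonical curve of genus $4$ carrying the other $3$, meeting at a set $\Delta$ of $3$ points. It then (a) checks via several exact sequences that $h^1(N_{C^\circ}) = 0$ for $C^\circ = X^\circ \cup_\Gamma Y$, so the Kontsevich space is smooth at $[C^\circ]$ and there is a unique component through it; and (b) applies Theorem~1.9 of \cite{rbn} twice, first to $X_1 \cup_{\Gamma_1} Y$ and then to $(X_1 \cup_{\Gamma_1} Y) \cup_{\Gamma_2 \cup \Delta} X_2$, to identify that component as the BN component. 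Step~(b) also requires checking that $X_1 \cup_{\Gamma_1} Y$ can be deformed to be transverse to $H$ along $\Gamma_2 \cup \Delta$, which is again an $h^1$-vanishing established in (a). None of this is a dimension count, and you would need to supply an argument of comparable strength.
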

\begin{proof}
Let $Y \to \pp^4$ be a general BN-curve of degree $d'$ and genus $g'$,
and $H$ be a general hyperplane.
Since $(d', g')$ is good by assumption,
the hyperplane section $Y \cap H$ is general;
in particular, since $d' \geq 7$,
it contains a set $\Gamma$ of $7$ general points
which satisfies $h^1(N_Y(-\Gamma)) = 0$.
Let $X$ be a general curve of degree $9$ and genus $6$ in $H$ passing through $\Gamma$ (again possible as $X$ satisfies interpolation).  Let $C$ be the reducible curve $X \cup_\Gamma Y$, which is of degree $d= d' + 9$ and genus $g = g' + 12$.  
It suffices to show that $C$ is a BN-curve.

For this, it suffices to show $C$ admits a specialization $C^\circ$
with $h^1(N_{C^\circ}) = 0$ such that $C^\circ$ is a BN-curve;
indeed, $h^1(N_{C^\circ}) = 0$ guarantees the smoothness of the Kontsevich
space at $[C^\circ]$, which implies $[C^\circ]$
lies in a unique component of the Kontsevich space.

We shall degenerate $C$ to $C^\circ = X^\circ \cup_\Gamma Y$
where $X^\circ = X_1 \cup_\Delta X_2$ is a degeneration of $X$
to a reducible curve
which still passes through $\Gamma$;
write $\Gamma = \Gamma_1 \cup \Gamma_2$
with $\Gamma_1 \subset X_1$ and $\Gamma_2 \subset X_2$.
Let $x \in X_1 \smallsetminus \Gamma_1$ be arbitrary.
From the exact sequences
\begin{gather*}
0 \to N_{C^\circ}|_{X_1 \cup_{\Gamma_1} Y} (-\Gamma_2 - \Delta) \to N_{C^\circ} \to N_{C^\circ}|_{X_2} \to 0 \\
0 \to N_{X_1 \cup_{\Gamma_1} Y} (-\Gamma_2 - \Delta - x) \to N_{C^\circ}|_{X_1 \cup_{\Gamma_1} Y} (-\Gamma_2 - \Delta) \to * \to 0 \\
0 \to N_{X_2 / H} \to N_{C^\circ}|_{X_2} \to \O_{X_2}(1)(\Gamma_2) \to 0,
\end{gather*}
where $*$ denotes a punctual sheaf which in particular satisfies $h^1(*) = 0$,
we see that to check $h^1(N_{C^\circ}) = 0$,
it suffices to check $h^1(N_{X_2 / H}) = h^1(\O_{X_2}(1)(\Gamma_2)) = h^1(N_{X_1 \cup_{\Gamma_1} Y} (-\Gamma_2 - \Delta - x)) = 0$.

We shall consider the specialization
where $X_1$ is a rational normal curve,
and $X_2$ is a canonical curve of genus $4$, which meet at a set $\Delta$
of $3$ points. We do this so that $\# \Gamma_1 = 4$ and $\# \Gamma_2 = 3$;
this can be done because
$X_1$ passes through $4$ general points, while $X_2$ passes through
$6$ general points, by \cite[Corollary 1.4]{joint}.
By inspection, $h^1(N_{X_2 / H}) = h^1(\O_{X_2}(1)(\Gamma_2)) = 0$,
so it remains to check
$h^1(N_{X_1 \cup_{\Gamma_1} Y} (-\Gamma_2 - \Delta - x)) = 0$.
This follows from the exact sequences
\begin{gather*}
0 \to N_{X_1 \cup_{\Gamma_1} Y}|_Y(-\Gamma) \to N_{X_1 \cup_{\Gamma_1} Y} (-\Gamma_2 - \Delta - x) \to  N_{X_1 \cup_{\Gamma_1} Y}|_{X_1}(-\Delta - x) \to 0 \\
0 \to N_{X_1 / H}(-\Delta - x) \to N_{X_1 \cup_{\Gamma_1} Y}|_{X_1}(-\Delta - x) \to \O_{X_1}(1)(\Gamma_1 - \Delta - x),
\end{gather*}
where as before $*$ denotes a punctual sheaf which in particular satisfies $h^1(*) = 0$.

It thus remains to show $C^\circ = (X_1 \cup_\Delta X_2) \cup_\Gamma Y = (X_1 \cup_{\Gamma_1} Y) \cup_{(\Gamma_1 \cup \Delta)} X_2$ is a BN-curve.
For this, we can apply Theorem~1.9 of \cite{rbn} twice,
first to conclude that $X_1 \cup_{\Gamma_1} Y$
is a BN-curve, and second to conclude that $C^\circ$ is a BN-curve
as desired --- provided that $X_1 \cup_{\Gamma_1} Y$ admits
a deformation which continues to pass through $\Gamma_1 \cup \Delta$
and is transverse to $H$ along $\Gamma_1 \cup \Delta$.

Since $X_1 \cup_{\Gamma_1} Y$ is already transverse to $H$ along $\Gamma_1$,
this reduces by deformation theory to checking
$h^1(N_{X_1 \cup_{\Gamma_1} Y}(-\Gamma_1 - \Delta - x)) = 0$
for any $x \in \Delta$, which was shown above.
\end{proof}

\begin{cor}\label{gleq14}
If for all $(d,g)$ satisfying $g\leq 14$ and $\rho(d,g,4) \geq 3$, a general BN-curve $Y$ of degree $d$ and genus $g$ satisfies interpolation for the twist $N_Y(-1)$, then a general BN-curve of any genus $g > 14$ does as well.
\end{cor}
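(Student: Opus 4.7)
The plan is to argue by strong induction on $g > 14$, at each step applying Lemma~\ref{dplus9} with $(d', g') \colonequals (d-9, g-12)$. The central observation that makes the induction close is that this descending step \emph{increases} $\rho$ by exactly $3$, since
\[\rho(d-9, g-12, 4) = 5(d-9) - 4(g-12) - 20 = \rho(d, g, 4) + 3.\]
In particular, descending never leaves the Brill--Noether region, and the threshold $\rho \geq 3$ appearing in the hypothesis of the corollary matches precisely the amount of $\rho$ gained per downward step.

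Concretely, given a BN-pair $(d, g)$ with $g > 14$, I would verify the three hypotheses of Lemma~\ref{dplus9} for the pair $(d', g') = (d-9, g-12)$. First, the identity above gives $\rho(d', g', 4) \geq 3$, so $(d', g')$ is itself a BN-pair. Second, any BN-pair with $g \geq 15$ satisfies $d \geq (4g+20)/5 \geq 16$, hence $d' \geq 7$. Third, to see that $(d', g')$ is good, split on whether $g' \leq 14$: if so, then $(d', g')$ falls under the hypothesis of the corollary (it has $g' \leq 14$ and $\rho(d', g') \geq 3$) and is therefore good; if not, then $15 \leq g' < g$ and the inductive hypothesis applies. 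Either way $(d', g')$ is good, and Lemma~\ref{dplus9} then yields that $(d, g)$ is good, closing the induction.

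I do not expect any real obstacle here: each step reduces to a direct algebraic verification, and the plan succeeds because the $\rho$-gain per downward step (namely $3$) matches the $\rho$-threshold in the corollary's hypothesis exactly. The only minor thing to confirm is the bound $d \geq 16$ at the smallest genus $g = 15$, which holds since $(4 \cdot 15 + 20)/5 = 16$ is an integer; for all larger $g$ the bound is even weaker relative to what is needed.
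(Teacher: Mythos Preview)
Your proposal is correct and takes essentially the same approach as the paper's proof: apply Lemma~\ref{dplus9} inductively, using that $g \geq 15$ forces $d \geq 16$ (so $d-9 \geq 7$) and that each downward step increases $\rho$ by $3$ (so one lands in the region $\rho \geq 3$ covered by the hypothesis). Your write-up simply makes explicit the $\rho$-computation and the induction structure that the paper's two-line proof leaves implicit.
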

\begin{proof}
If $(d,g)$ is BN and $g \geq 15$, then $d \geq 16$.  Hence $d-9 \geq 7$, and we can apply Lemma \ref{dplus9} to inductively reduce to a curve of smaller genus with $\rho \geq 3$.
\end{proof}

Note that all the counterexamples appearing in Theorem~\ref{main}
have $\rho \leq 2$,
so it suffices to prove Theorem~\ref{main} for $g \leq 14$.
We now deal with these remaining cases.  First by attaching twisted cubics at a single point, we show that it suffices to consider three BN-pairs for each $g \leq 14$.

\begin{lem}\label{3ds}
If a BN-pair $(d', g')$ is good, then the pair $(d'+ 3, g')$ is also good.
\end{lem}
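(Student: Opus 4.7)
The plan is to specialize: take $Y$ a general BN-curve of degree $d'$ and genus $g'$ in $\pp^4$ (so $N_Y(-1)$ satisfies interpolation by the hypothesis that $(d',g')$ is good), let $H$ be a general hyperplane, pick a general point $p \in Y \cap H$, and attach a general twisted cubic $X \subset H$ through $p$. By construction $Y$ meets $H$ transversely at $p$, so the nodal curve $C = X \cup_p Y$ has degree $d' + 3$ and arithmetic genus $g' + 0 + 1 - 1 = g'$, matching the claim.

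To apply Lemma~\ref{in_transverse} with $s = 1$, $d'' = 3$, and $g'' = 2s + 1 - d'' = 0$, I need interpolation for $N_Y(-1)$ (given) and for $N_{X/H}(-1)$. The latter follows from the main theorem of \cite{vogt}: twisted cubics in $\pp^3$ are BN-curves with $\rho > 0$, and their twisted normal bundle satisfies interpolation. Lemma~\ref{in_transverse} then produces interpolation for $N_C(-1)$ directly.

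The remaining step, and the only real subtlety, is to verify that $C$ actually lies in the distinguished BN-component $M_{g'}(\pp^4, d'+3)^\circ$. Since we have attached a rational curve at a single smooth point, this should be routine: either by appealing to Theorem~1.9 of \cite{rbn} (whose hypotheses --- attaching a rational BN-curve to an existing BN-curve at a point --- are visibly satisfied here), or via direct deformation theory, observing that the normal bundle of a twisted cubic in $\pp^4$ is nonspecial so $h^1(N_C) = 0$ at $[C]$, whence the Kontsevich space is smooth at $[C]$ and $[C]$ lies in a unique component; this component contains smoothings to BN-curves of degree $d'+3$ and genus $g'$, and is therefore $M_{g'}(\pp^4, d'+3)^\circ$. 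The main conceptual content of the lemma is entirely captured by the single application of Lemma~\ref{in_transverse}; the BN-verification is the kind of standard regeneration step that the authors have already executed (in much more elaborate form) in Lemma~\ref{dplus9}.
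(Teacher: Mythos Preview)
Your proposal is correct and follows the same approach as the paper: attach a twisted cubic in a hyperplane at a single point and invoke Lemma~\ref{in_transverse} with $(d'',g'',s)=(3,0,1)$. The paper's proof is just a terser version of yours, citing Theorem~1.6 of \cite{rbn} (rather than~1.9) for the BN-verification.
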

\begin{proof}
Attach a general BN-curve $Y$ of degree $d'$ and genus $g'$ at a single point to a general twisted cubic $X \subset H$.  The union has degree $d'+3$ and genus $g'$, and lies in the correct component of $M_{g'}(\pp^4, d'+3)$ by Theorem~1.6 of~\cite{rbn}. 
\end{proof}

\begin{proof}[{Proof of Lemma \ref{cut_to_finite}}]
Corollary \ref{basecases} provides plenty of base cases for the induction: all curves with $d \geq 2g$.    If $g \leq 5$ and $(d,g) \neq (8,5)$ or $(9,5)$, then $d \geq 2g$.  So, using the result of Corollary \ref{gleq14} we will only consider $6\leq g \leq 14$.  For each such $g$, let $d_{\min}$ be the minimal $d$ such that $(d,g)$ is BN and $(d,g) \notin \{(6,2), (8,5), (9,6), (10,7)\}$.
By Lemma \ref{3ds}, it suffices to prove that $(d_{\min}, g), (d_{\min} +1, g),$ and $(d_{\min} + 2, g)$ are good.  We do this using Lemma \ref{in_transverse} with curves $X = (6,3), (7,4)$, and $(8,5)$:
\begin{center}
\begin{tabular}{c | c | c | c | c}
$g$ & $d_{\min}$ & certificate for $(d_{\min}, g)$ & certificate for $(d_{\min}+1, g)$ & certificate for $(d_{\min}+2, g)$ \\ \hline
6 & 10 & $X = (6,3)$, $Y= (4,0)$ & $X = (6,3)$, $Y = (5,0)$ & $12 \geq 2\cdot 6$ \\
7 & 11 & $X = (6,3)$, $Y= (5,1)$ & $X = (6,3)$, $Y = (6,1)$ & $X = (6,3)$, $Y = (7,1)$ \\
8 & 11 & by assumption & $X = (7,4)$, $Y = (5,0)$ & $X = (6,3)$, $Y = (7,2)$ \\
9 & 12 & $X = (7,4)$, $Y= (5,1)$ & $X = (6,3)$, $Y = (7,3)$ & $X = (6,3)$, $Y = (8,3)$ \\
10 & 12 & by assumption & by assumption & $X = (6,3)$, $Y = (8,4)$ \\
11 & 13 & by assumption & $X = (7,4)$, $Y = (7,3)$ & $X = (6,3)$, $Y = (9,5)$ \\
12 & 14 & by assumption & $X = (7,4)$, $Y = (8,4)$ & $X = (6,3)$, $Y = (10,6)$ \\
13 & 15 & $X = (8,5)$, $Y = (7,3)$ & $X = (7,4)$, $Y = (9,5)$ & $X = (6,3)$, $Y = (11,7)$ \\
14 & 16 & $X = (8,5)$, $Y = (8,4)$ & $X = (6,3)$, $Y = (11,8)$ & $X = (6,3)$, $Y = (12,8)$ 
\end{tabular}
\end{center}
In each of these cases, $X \cup_\Gamma Y$ is a BN-curve by Theorem~1.9 of \cite{rbn}.
\end{proof}

In the remainder of the paper we show that the $(d,g)$ pairs 
\[\{(9,5), (11,8), (12,10), (13,10), (13,11), (14,12) \} \] 
are good; and furthermore, we show interpolation for the untwisted normal bundle in the cases $(d,g) \in \{ (8,5), (9,6), (10,7)\}$.


\section{Degeneration with a canonical curve}\label{canonical}

The key argument for $(d,g) \in \{(12,10), (13,10), (13,11), (14,12) \} $ consists of degenerating to the union of a canonical curve $D \subset \pp^4$ of degree $8$ and genus $5$, and another nodal curve.

\textbf{Setup.}
Let $D$ be a general canonical curve ($(d,g) = (8,5)$), and let $p_1, q_1, p_2, q_2, p_3, q_3$ be 6 general points.  Let $\Gamma = p_1 +  q_1 + \cdots + q_3$ denote the sum of these points on $D$.  For each $i$, let $p_i' \in T_{p_i}D$ be a choice of point on the tangent line.  Denote by $L_i$ the line $\bar{p_i, q_i}$.  Suppose that $C \supset L_1 \cup L_2 \cup L_3$ is a connected nodal curve BN-curve containing the lines $L_1, L_2$ and $L_3$.  
Let $Y = C \cup_\Gamma D$ be the nodal curve obtained from gluing $C$ and $D$ along $\Gamma$; this is a BN-curve by Theorem~1.6 of \cite{rbn}.

\begin{lem}\label{canonical_degeneration}
If $N_C(-1)[p_1 \to p_1'][p_2 \to p_2'][p_3 \to p_3']$ satisfies interpolation,
then $N_Y(-1)$ satisfies interpolation as well.
\end{lem}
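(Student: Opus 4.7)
The plan is to apply the gluing framework of Corollary~\ref{h0_gluing_vb} to the decomposition $Y = D \cup_\Gamma C$.  By Lemma~\ref{normal_bundle_nodal}, since $T_{p_i}C = L_i$ passes through $q_i$ (and conversely), the two restrictions take the form
\begin{align*}
N_Y(-1)|_D &\simeq N_D(-1)(\Gamma)[p_1 \to q_1][q_1 \to p_1]\cdots[p_3 \to q_3][q_3 \to p_3], \\
N_Y(-1)|_C &\simeq N_C(-1)(\Gamma)[p_1 \to T_{p_1}D][q_1 \to T_{q_1}D]\cdots[p_3 \to T_{p_3}D][q_3 \to T_{q_3}D],
\end{align*}
with the $D$-side modifications pointing along the secant $L_i$ and the $C$-side modifications pointing along the tangent direction of $D$.

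The crucial input is the canonical splitting $N_D(-1) \simeq K_D^{\oplus 3}$.  Under this identification, $H^0(N_D(-1)(\Gamma))$ is naturally identified with $H^0(K_D(\Gamma))^{\oplus 3}$, and the six modification conditions at the $p_i$ and $q_i$ become explicit linear conditions on such triples.  A Riemann--Roch computation combined with the $K_D^{\oplus 3}$ decomposition allows us to describe $H^0(N_Y(-1)|_D)$ together with the restriction map
\[
\res_{D, \Gamma}\colon H^0(N_Y(-1)|_D) \longrightarrow N_Y(-1)|_\Gamma.
\]
The key claim is that, for each $i$, the combined pair of modifications $[p_i \to q_i][q_i \to p_i]$ on $D$ interacts with the $T_{\bullet}D$ modifications on $C$ in such a way that, once a $C$-side section is matched with a $D$-side section at $\Gamma$, the constraint at $q_i$ is automatic.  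The only effective condition remaining is that the $C$-section point in the direction $T_{p_i}D$ at each $p_i$; equivalently, after accounting for $D$-side sections, the space $V$ of matched sections corresponds exactly to $H^0\bigl(N_C(-1)[p_1 \to p_1'][p_2 \to p_2'][p_3 \to p_3']\bigr)$.

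Once this identification is established, interpolation for the three-modification bundle on $C$ transfers to interpolation for $N_Y(-1)$ via Corollary~\ref{h0_gluing_vb}, with the discrepancy in Euler characteristic between the two bundles absorbed by the contribution of $D$-side sections that vanish on $\Gamma$.  The main obstacle is precisely the asymmetric collapse of the $q_i$-modifications: verifying that the six conditions imposed by the secant lines $L_i$, when viewed through the $K_D^{\oplus 3}$ splitting and glued against the corresponding $C$-side conditions, reduce to the three independent $p_i$-conditions.  This is the delicate step and relies essentially on the concrete form of $N_D(-1) \simeq K_D^{\oplus 3}$ together with the projective geometry of $2$-secant lines to the canonical curve.
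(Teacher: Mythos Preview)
Your gluing framework is right, and so is the instinct that the splitting of $N_D(-1)$ is the key.  But there is a genuine gap in the Euler-characteristic bookkeeping, and closing it is exactly what forces the delicate argument you defer.

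Concretely: you propose to apply Corollary~\ref{h0_gluing_vb} with $E = N_Y(-1)$ and $F = N_C(-1)[p_1 \to p_1'][p_2 \to p_2'][p_3 \to p_3']$.  But $\chi(F) = \chi(N_Y(-1)) - 12$, so the hypothesis $\chi(F) = \chi(E)$ fails.  Your explanation, that the discrepancy is ``absorbed by the contribution of $D$-side sections that vanish on $\Gamma$,'' is incorrect: each summand $K_D(p_i+q_i)$ has degree $10$ on a genus-$5$ curve, so $h^0(K_D(p_i+q_i)(-\Gamma)) = 0$ and the restriction $\res_{D,\Gamma}$ is an \emph{isomorphism}.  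There are no such sections.  Equivalently, if you apply Lemma~\ref{h0_gluing} directly, the space $V$ is all of $H^0(N_Y(-1)|_C)$, which carries six modifications (at the $p_i$ \emph{and} the $q_i$), not three --- and you give no mechanism by which the $q_i$-conditions disappear from a bundle of the wrong Euler characteristic.

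The paper resolves this by first twisting $N_Y(-1)$ down by four additional general points $w,x,y,z$ on $D$ (this accounts exactly for the missing $12 = 4\cdot 3$), and then running the gluing argument for $N_Y(-1)(-w-x-y-z)$.  The specific splitting used is $N_Y(-1)|_D \simeq \bigoplus_i K_D(p_i+q_i)$, obtained by choosing the three quadrics cutting out $D$ so that each contains two of the secant lines.  After twisting by $-w$, each summand $F_i = K_D(p_i+q_i-w)$ has $h^0 = 5$, and its image in $F_i|_\Gamma$ is the antidiagonal at $(p_i,q_i)$ by the residue theorem.  One then further twists by $-x-y-z$ and \emph{specializes} $(x,y,z) \to (p_1,p_2,p_3)$.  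At this limit, the residue constraint forces the $i$-th summand to vanish at $q_i$ as well as at $p_1,p_2,p_3$, so the image $S^i_{p_1p_2p_3} \subset F_i|_\Gamma$ is exactly $F_i|_{q_j}\oplus F_i|_{q_k}$.  Translating through the gluing, the matching condition on $C$-sections becomes ``vanish at each $p_i$, regular (as meromorphic sections of $N_C(-1)$) at each $q_j$,'' which is precisely $H^0(N_C(-1)[p_1\to p_1'][p_2\to p_2'][p_3\to p_3'])$.  The collapse of the $q_i$-conditions you assert is therefore not automatic from the static geometry; it is produced by this twist-then-limit maneuver, and your proposal is missing both the twist and the limit.
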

\begin{proof}
The canonical curve $D$ is the complete intersection of a net of quadrics in $\pp^4$.
Note that no quadric in the net contains all $2$-secant lines to $D$,
as the secant variety is of degree $\binom{8 - 1}{2} - 5 = 16$.
Since it is one condition on a quadric containing $D$ to contain a $2$-secant line to $D$,
we may choose 
three independent quadrics from the net such that each contains exactly two of the three lines $L_1$, $L_2$, and $L_3$.
This gives an isomorphism
\begin{equation}\label{canonical_restriction} N_Y(-1)|_D \simeq K_D(p_1+q_1) \oplus K_D(p_2+q_2) \oplus K_D(p_3+q_3). \end{equation}

Let $w$ be a general point on $D$.  The line bundle $F_i \colonequals K_D(p_i + q_i - w)$ on $D$ is of degree $9$ and has a $5$-dimensional space of global sections, which correspond to meromorphic differentials on $D$ that vanish at $w$ and have at most poles at $p_i$ and $q_i$ with opposite residues.

Since we may compute that $H^0(F_i(-\Gamma)) = 0$, the restriction map
\[H^0(F_i) \xrightarrow{\res} F_i|_{p_1} \oplus F_i|_{q_1} \oplus F_i|_{p_2} \oplus F_i|_{q_2} \oplus F_i|_{p_3} \oplus F_i|_{q_3}, \]
is an isomorphism onto a $5$-dimensional subspace of $F_i |_{\Gamma} \simeq \cc^6$, which can be described
as the preimage of the antidiagonal (i.e.\ the two coordinates negatives
of eachother)
in the $2$-dimensional $F_i|_{p_i} \oplus F_i|_{q_i}$, under the projection map $F_i|_{\Gamma} \to F_i|_{p_i} \oplus F_{i}|_{q_i}$.

The bundle $F_i(-p_1-p_2-p_3) = K_D(q_i - w- p_j-p_k)$ on $D$ satisfies $h^0(F_i(-p_1-p_2-p_3)) = 2$ and $h^1(F_i(-p_1-p_2-p_3)) = 0$, as $h^0(K_D(q_i)) = 5$ and $w$, $p_j$, and $p_k$ are general points on $D$.  For points $(x,y,z)$ in some neighborhood $U \subset D \times D \times D$ of the point $(p_1, p_2, p_3)$, the bundle $F_i(-x-y-z)$ therefore also has $2$ global sections by semicontinuity.  Let $\mathcal{F}_i$ denote the bundle on $D \times U$ which specialized to $F_i(-x-y-z)$ on the fiber $D \times \{(x,y,z)\}$.  Let 
\[\pi_1 \colon D \times U \to D, \qquad \pi_2 \colon D \times U \to U \]
denote the two projections.  

By the theorem on cohomology and base change, the pushforward ${\pi_2}_* \F_i$ is a rank $2$ vector subbundle of a rank $5$ trivial bundle:
\[{\pi_2}_*\F_i \subseteq {\pi_2}_*(\pi_1^*F_i) = H^0(F_i) \otimes\O_U. \]
Composing with the restriction map above yields a map
\[r_i \colon  \ {\pi_2}_* \F_i \to H^0(F_i) \otimes \O_U \to (F_i|_\Gamma \simeq \cc^6) \otimes \O_U, \]
whose fiber over $(x, y, z)$ extracts the image 
\[S^i_{xyz} \colonequals r_i({\pi_2}_* \F_i|_{(x,y,z)}) =  \res(H^0(F_i(-x-y-z))) \subset F_i|_{\Gamma}\] 
for each point $(x,y,z) \in U$.  Write $S_{xyz} \colonequals S^1_{xyz} \oplus S^2_{xyz} \oplus S^3_{xyz}$.

By Lemma \ref{h0_gluing}, in order to prove interpolation for $N_Y(-1)(-x-y-z-w)$, with $x,y,z,w$ general points, it suffices to prove interpolation for the space of sections
\[V_{xyz} \colonequals \left\{\sigma \in H^0(N_Y(-1)|_C) : \sigma|_\Gamma \in S_{xyz} \subseteq N_Y(-1)(-x-y-z-w)|_\Gamma \simeq \cc^{18} \right\}, \]
provided that it has the correct dimension.  Furthermore as satisfying interpolation and having the correct dimension is an open condition, it suffices to prove that 
for the specialization $(p_1, p_2, p_3)$ of the general points $(x,y,z)\in U$, the space of sections
\[V_{p_1p_2p_3} = \left \{\sigma \in H^0(N_Y(-1)|_C) : \sigma|_\Gamma \in S_{x'y'z'}  \right\}, \]
is of the correct dimension and satisfies interpolation.  In this case, 
\[S^i_{p_1p_2p_3} = \res(H^0(K_D(q_i - w -p_j-p_k))) = F_i|_{q_j} \oplus F_i|_{q_k} \subset F_i|_{\Gamma}, \qquad  i,j \neq k.\]
When viewed as meromorphic sections $N_C(-1)$,
the sections of $N_Y(-1)|_C$ whose restrictions to $\Gamma$ lie in the subspace $S_{p_1p_2p_3}$ therefore vanish at the $p_i$ and are regular at the $q_i$.
Therefore, it suffices to prove that
\begin{align*}
V_{p_1p_2p_3} &= \left\{ \sigma \in H^0(N_Y(-1)|_C) : \sigma|_{p_i} = 0, \text{ $\sigma$ is regular at $q_j$} \right\} \\
&=  \left\{ \sigma \in H^0(N_Y(-1)|_C(-p_1-p_2-p_3)) :  \text{ $\sigma$ preserves the node at $q_j$} \right\}
\end{align*}
is of the correct dimension and satisfies interpolation.  This subspace is the space of global sections of the vector bundle
\[N_C(-1)[p_1 \to p_1'][p_2 \to p_2'][p_3 \to p_3'],\]
which has the same rank and Euler characteristic as $N_Y(-1)(-x-y-z-w)$.  Therefore it suffices to prove that this vector bundle satisfies interpolation.
%
\end{proof}


We now deal with each of the cases $(d,g) \in \{(12,10), (13,10), (13,11), (14,12) \} $ separately.

\subsection{Degree 12, genus 10}\label{12_10}

In this case we take $C$ in the setup above to be the union of the three general lines $L_1 \cup L_2 \cup L_3$ and a line $M$ meeting all $3$ lines, which is a BN-curve by Theorem~1.6 of \cite{rbn}.  Let $m_i \colonequals L_i \cap M$ be the points of intersection so that the curve $C$ is precisely the nodal union
\[C\colonequals (L_1 \cup L_2 \cup L_3) \cup_{m_1 + m_2 + m_3} M.\]  
By Lemma \ref{canonical_degeneration}, it suffices to prove interpolation for the modified normal bundle $N_C(-1)[p_1 \to p_1'][p_2\to p_2'][p_3 \to p_3']$. 
By Lemma \ref{1-secant}, we may pull off each of the $1$-secant lines $L_i$ in turn to reduce to proving interpolation for the bundle
\[N_M(-1)(m_1+m_2+m_3)[m_1 \to p_1][m_1 \to p_1+p_1'][m_2 \to p_2][m_2 \to p_2+p_2'][m_3 \to p_3][m_3 \to p_3+p_3'],\]
or, by limiting $p_1'$ to $p_2$, and $p_2'$ to $p_3$, and $p_3'$ to $p_1$
(and using the properties in Proposition~\ref{mod-prop})
for the bundle
\[N_M(-1)(2m_1+2m_2+2m_3)[2m_1\to p_1][m_1 \to p_2][2m_2\to p_2][m_2 \to p_3][2m_3 \to p_3][m_3\to p_1].\]
Limiting $m_2, m_3 \to m_1$ is tree-like, as the points $p_1, p_2$, and $p_3$ are general in $\pp^4$.  Therefore, it suffices to prove interpolation for
\[N_M(-1)(6m_1)[3m_1 \to p_1][3m_1 \to p_2][3m_1\to p_3] \simeq N_M(-1)[3m_1 \to p_1 + p_2 + p_3] \simeq N_M(-1).\]
The result now follows from the fact that $N_M(-1) \simeq \O^{\oplus 3}$
satisfies interpolation.

\subsection{Degree 13, genus 10}\label{13_10}

We continue with the notation of the setup above.
In this case we take $M$ to be a general line meeting $L_1$ and $L_2$ and $N$ to be a general line meeting $L_2$ and $L_3$.  Call $M \cap L_i \equalscolon m_i$ and $N \cap L_j \equalscolon n_j$.  Let 
\[C = (N \cup M) \cup_{m_1+m_2+n_2+n_3} (L_1 \cup L_2 \cup L_3)\]
 be the union, which is a BN-curve by Theorem~1.6 of \cite{rbn}. So by Lemma \ref{canonical_degeneration} it suffices to prove interpolation for $N_C(-1)[p_1 \to p_1'][p_2 \to p_2'][p_3 \to p_3']$.
By Lemma~\ref{1-secant}, this follows from interpolation for the bundle
\[N_1 \colonequals N_{N \cup_{n_2} L_2 \cup_{m_2} M}(-1)(m_1+n_3)[p_2 \to p_2'][m_1 \to p_1][m_1 \to p_1 + p_1'][n_3 \to p_3][n_3 \to p_3 + p_3']. \]
By Lemma~\ref{check_one}, it suffices to show the vanishing of $h^0(N')$
where $N' \colonequals N_1(-x-y)$
where $x$ and $y$ are general points on $N$ and $M$ respectively.
%
%
Restricting to $M$ we have
\begin{align*}
N'|_M & \simeq N_M(-1)(-y+m_1+m_2)[m_1 \to p_1][m_1 \to p_1 + p_1'][m_2 \to p_2] \\
&\simeq N_M[m_1 \to p_1][m_1 \to p_1 + p_1'][m_2 \to p_2]. \end{align*}
As the points $p_1$, $p_2$ and $p_1'$ are independent, we may make a choice of the three hyperplanes $H_i$ defining $M = H_1 \cap H_2 \cap H_3$ so that $p_1 \in H_2 \cap H_3$, $p_2 \in H_1 \cap H_3$ and $p_1' \in H_1 \cap H_2$.  Then the modifications towards these points become simple twists:
\[N'|_M \simeq N_{H_1}|_M(-m_2) \oplus N_{H_2}|_M(-2m_1) \oplus N_{H_3}|_M(-m_1-m_2) \simeq \O_M \oplus \O_M(-1)^{\oplus 2}. \]
This restriction has a unique global section, coming from the factor $N_{H_1}|_M(-m_2)$ of normal directions pointing to $p_1$.  Therefore, applying the argument of Lemma \ref{h0_gluing}, it suffices to show the vanishing of $h^0$
of the subbundle of $N'|_{N \cup_{n_2} L_2}$ whose sections at $m_2$ point towards $p_1$:
\[N_{N \cup_{n_2} L_2}(-1)(-x+n_3+m_2)[p_2 \to p_2'][m_2 \to m_1][m_2 \to p_1][n_3 \to p_3][n_3 \to p_3 + p_3'].\]
We may apply the same argument to $N$ to reduce to showing there are no sections of
\begin{align*}
N_{L_2}(-1)&(n_2+m_2)[p_2 \to p_2'][n_2 \to n_3][n_2 \to p_3][m_2 \to m_1][m_2 \to p_1] \\
& \simeq N_{L_2}(-1)[p_2 \to p_2'][n_2 \to n_3 + p_3][m_2 \to m_1 + p_1].
\end{align*}
Limiting $p_2$ to $n_2$, which is tree-like as $p_2'$ is general, it suffices to show the vanishing of $h^0$ for
\begin{align*}
N_{L_2}(-1)(-n_2)[n_2 \to n_3 + p_3 + p_2'][m_2 \to m_1 + p_1] &\simeq N_{L_2}(-1)(-n_2)[m_2 \to m_1 + p_1],
\intertext{and as $m_1$ and $p_1$ are in general directions, we have}
& \simeq \O_M(-2) \oplus \O_M(-1) \oplus \O_M(-1),
\end{align*}
which has no global sections, as desired.

\subsection{Degree 13, genus 11}\label{13_11}

As in the setup above, we choose $C$ to contain the lines $L_1, L_2, L_3$ as well as two further lines $M$ and $N$, where $M$ meets each $L_i$ at a single point $m_i$ and $N$ meets $L_1$ and $L_2$ at points $n_1$ and $n_2$ respectively.  As the lines $L_1$, $L_2$, and $L_3$ are general, the lines can be assumed to satisfy various genericity conditions: $N \cup M \cup L_i$ span a $\pp^3$ for $i=1,2$. 

Then the resulting union 
\[C \colonequals (N \cup M) \cup_{m_1+m_2+m_3+n_1+n_2} (L_1 \cup L_2 \cup L_3) \]
is a nodal connected curve of degree $5$ and arithmetic genus $2$, which is a BN-curve by Theorem~1.6 of \cite{rbn}.  By Lemma \ref{canonical_degeneration}, it suffices to show that 
\[N_C' \colonequals N_C(-1)[p_1 \to p_1'][p_2 \to p_2'][p_3 \to p_3']\]
satisfies interpolation.  By Lemma \ref{2-secant-easy}, it suffices to show that 
\[N_{(N \cup M) \cup_{m_2+m_3+n_2} (L_2 \cup L_3)}(-1)[p_2 \to p_2'][p_3 \to p_3'][n_1 \to m_1][m_1 \to n_1] \]
satisfies interpolation.  By Lemma \ref{1-secant}, this reduces in turn
to showing that
\[N_{M \cup_{m_2+m_3} L_2 \cup L_3}(-1)(n_2)[p_2 \to p_2'][p_3 \to p_3'][m_1 \to n_1][n_2 \to n_1][n_2 \to n_1+ m_1] \]
satisfies interpolation.  Again by Lemma \ref{1-secant},
this further reduces to proving interpolation for
\[ N_{M \cup_{m_2} L_2}(-1)(n_2 + m_3)[p_2 \to p_2'][m_1 \to n_1][n_2 \to n_1][n_2 \to n_1 + m_1][m_3 \to p_3][m_3 \to p_3 + p_3']. \]
We may limit $p_2 \to n_2$, since $p_2'$ is disjoint from $\bar{n_1, m_1}$ and so this is tree-like.  We are therefore reduced to proving interpolation for the vector bundle
\begin{align*}
N_{M\cup_{m_2} L_2}&(-1)(n_2 + m_3)[n_2 \to p_2'][m_1 \to n_1][n_2 \to n_1][n_2 \to n_1 + m_1][m_3 \to p_3][m_3 \to p_3 + p_3'] \\
&\simeq N_{M\cup_{m_2} L_2}(-1)(m_3)[m_1 \to n_1][n_2 \to n_1][n_2 \to n_1 + m_1+p_2'][m_3 \to p_3][m_3 \to p_3 + p_3']
\intertext{As $p_2'$ is general and $m_1$, $n_1$, and $L_2$ span a $\pp^3$, the linear space $\bar{p_2', m_1, n_1, L_2} = \pp^4$.  Therefore the above bundle is isomorphic to}
&\simeq N_{M\cup_{m_2} L_2}(-1)(m_3)[m_1 \to n_1][n_2 \to n_1][m_3 \to p_3][m_3 \to p_3 + p_3'].
\end{align*}
We may now apply Lemma \ref{1-secant} a final time to reduce to showing interpolation for
 \[N_M(-1)(m_2 + m_3)[m_2 \to n_2][m_2 \to n_2 + n_1][m_1 \to n_1][m_3 \to p_3][m_3 \to p_3 + p_3']. \]
Limit $p_3'$ to $n_1$.  Then we may limit $m_2$ and $m_3$ to $m_1$, as $n_1, n_2$ and $p_3$ are independent so this is tree-like.  The resulting vector bundle is
\begin{align*}
N_M(-1)&(2m_1)[m_1 \to n_2][m_1 \to n_2 + n_1][m_1 \to n_1][m_1 \to p_3][m_1 \to p_3 + n_1] \\
&\simeq N_M(-1)(4m_1)[m_1 \to n_1][2m_1 \to n_1][2m_1 \to n_2][2m_1\to p_3]\\
& \simeq N_M(-1)[m_1 \to n_1][2m_1 \to n_1 + m_1 + p_3],
\intertext{and as $n_1$, $m_1$ and $p_3$ are independent, and so span all of $\pp^4$, we have}
& \simeq N_M(-1)[m_1 \to n_1]\\
&\simeq \O_M \oplus \O_M(-1)^{\oplus 2},
\end{align*}
which satisfies interpolation by inspection.

\subsection{Degree 14, genus 12}\label{14_12}

Continuing in the setup above, let $M$ be a general line meeting each $L_i$ at $m_i$ and let $N$ be a general conic meeting each $L_i$ at $n_i$.
Pick points $n_i' \neq n_i$ on $T_{n_i} N$,
and a point $m \in M$ distinct from $m_1$, $m_2$, and $m_3$.
Let $C$ denote the union 
\[C \colonequals (M \cup N) \cup_{m_1+m_2+m_3+n_1+n_2+n_3} (L_1 \cup L_2 \cup L_3),\]
 which is a BN-curve by Theorem~1.6 of \cite{rbn}.  It then suffices to prove interpolation for $N_C' \colonequals N_C(-1)[p_1 \to p_1'][p_2\to p_2'][p_3 \to p_3']$.  This vector bundle has Euler characteristic $5$.  So by Lemma \ref{check_one}, it suffices to show that for $2$ points $x + y$, the twist down satisfies 
\begin{equation}\label{14_12_vanishing}h^0(N_C(-1)(-x-y)[p_1 \to p_1'][p_2 \to p_2'][p_3 \to p_3'])=0.\end{equation}
We will choose $x$ and $y$ to be general points on $M$, so that $N_C'(-x-y)|_M \simeq \O(-1)^{\oplus 3}$.  Therefore to show that the vanishing of \eqref{14_12_vanishing}, it suffices to show that 
\[h^0(N_C'|_{N \cup_{n_1+n_2+n_3} L_1 \cup L_2 \cup L_3}(-m_1-m_2-m_3)) = 0, \]
as in the proof of Lemma \ref{3-secant-twist}.  This bundle restricts to each $L_i$ to be
\[ N_{L_i}(-1)(n_i)[n_i \to n_i'][p_i \to p_i'][m_i \to m] \simeq \O_{L_i}(-1)^{\oplus 3}, \]
as $L_i$, $n_i'$, $p_i'$, and $m$ are linearly independent.
Therefore as in Lemma \ref{h0_gluing}, it suffices to show that
\[h^0(N_N(-1)[n_1 \to p_1][n_2\to p_2][n_3 \to p_3]) = 0, \]
where the $p_i$ are in general directions.  By the semicontinuity theorem it suffices to prove this vanishing after specializing $p_1$, $p_2$ and $p_3$.  Fix a quadric $Q$ and hyperplanes $H_1$ and $H_1$ such that $N$ is the complete intersection $Q\cap H_1 \cap H_2$.  We have 
\begin{equation}\label{decomp} N_N(-1) \simeq N_Q(-1)|_N \oplus N_{H_1}(-1)|_N \oplus N_{H_2}(-1)|_N \simeq \O_N(1) \oplus \O_N^{\oplus 2}. \end{equation}
Specialize $p_1$ and $p_2$ so that they are general in
$T_{n_1}(Q\cap H_2)$ and $T_{n_2}(Q \cap H_2)$ respectively,
and $p_3$ so that it is general in $T_{n_3}(Q\cap H_1)$.  Then the negative modifications are towards specific factors in the decomposition \eqref{decomp}, and so become simple twists:
\begin{align*}
N_N(-1)[n_1 \to p_1][n_2\to p_2][n_3 \to p_3] &\simeq \O_N(1)(-p_1-p_2-p_3) \oplus \O_N(-p_3) \oplus \O_N(-p_1-p_2),
\intertext{and under an isomorphism $M \simeq \pp^1$,}
& \simeq \O_{\pp^1}(-1) \oplus \O_{\pp^1}(-1) \oplus \O_{\pp^1}(-2),
\end{align*}
which has no global sections, as desired.



\section{Degeneration with a rational quartic curve}\label{quartic}

Let $X$ be a nondegenerate curve of degree $d$ and genus $g$ in $\pp^4$.  Let $(p_1, q_1), (p_2, q_2), (p_3, q_3)$ be three pairs of general points on $X$.  Let $\Gamma \colonequals p_1 + p_2 + p_3 + q_1 + q_2 + q_3$.  Denote by $L_i$ the 2-secant line through $p_i$ and $q_i$.  There is a unique fourth line $M$ meeting each of the $L_i$ (by, for example, intersection theory
in the Grassmannian of lines in $\pp^4$).
Call $L_i \cap M = m_i$.  As $X$ is nondegenerate, the union $L_1 \cup L_2 \cup L_3 \cup M$ spans $\pp^4$.  Call this union 
\[Y \colonequals X \cup_\Gamma \big((L_1 +L_2 +L_3) \cup_{m_1 + m_2 + m_3} M\big),\]
 which is a BN-curve by Theorem~1.6 of~\cite{rbn}.
We will use this particular degeneration to prove interpolation for $N_Y(-1)$ when $Y$ is a curve of degree $11$ and genus $8$, and for $N_Y$ when $Y$ is a curve of degree $9$ and genus $6$ (respectively degree $10$ and genus $7$).


\begin{lem}\label{3_secant}
With notation as above, 
\begin{enumerate}[(i)]
\item $N_Y(-1)$ satisfies interpolation if 
\[N_Y(-1)|_X(-p_1-q_1-p_2-q_2 -p_3-q_3) \simeq N_X(-1)[p_1 \to q_1][q_1 \to p_1][p_2 \to q_2][q_2 \to p_2][p_3 \to q_3][q_3 \to p_3] \]
satisfies interpolation.
\item $N_Y$ satisfies interpolation if 
\[ N_Y|_X = N_X(p_1+q_1+p_2+q_2 +p_3+q_3)[p_1 \to q_1][q_1 \to p_1][p_2 \to q_2][q_2 \to p_2][p_3 \to q_3][q_3 \to p_3] \]
satisfies interpolation.  This vector bundle satisfies interpolation if the modified normal bundle
\[N_X' \colonequals N_X[q_1 + q_2 + q_3 \to p_1 ] \]
satisfies interpolation.
\end{enumerate}
\end{lem}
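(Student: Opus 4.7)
The plan for both parts is to apply Corollary \ref{h0_gluing_vb} to the decomposition $Y = X \cup_\Gamma Q$, where $Q = (L_1 \cup L_2 \cup L_3) \cup_{m_1+m_2+m_3} M$ is the degenerate rational normal quartic, after twisting by a carefully chosen divisor supported on $Q$ so that the restriction map from $H^0$ on the $Q$-side has zero image and Euler characteristics match.

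For part (i), I will first compute $N_Y(-1)|_Q$ using Lemma \ref{normal_bundle_nodal} and the hyperplane-decomposition technique of Lemmas \ref{2-secant-easy} and \ref{3-secant-twist}: the generality of the tangent directions of $X$ at the $p_i, q_i$ (together with transversality of $M$ at the $m_i$) yields
\[
N_Y(-1)|_{L_i} \simeq \O_{L_i}(1)^{\oplus 3}, \qquad N_Y(-1)|_M \simeq \O_M(1)^{\oplus 3}.
\]
I then pick an effective divisor $D$ of degree $5$ on $Q$ --- concretely, one general point on each $L_i$ plus two general points on $M$ --- and verify directly that $H^0(N_Y(-1)(-D)|_Q) = 0$: the restriction to $M$ becomes $\O_M(-1)^{\oplus 3}$ (no sections), each restriction to $L_i$ becomes $\O_{L_i}^{\oplus 3}$ (with sections the $3$-dimensional space of constants), and the $9$-dimensional source maps isomorphically onto $\bigoplus_i N_Y(-1)|_{m_i}$. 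Since $D$ is disjoint from $X$, Corollary \ref{h0_gluing_vb} applied to $E := N_Y(-1)(-D)$ identifies the relevant space of sections as $H^0(N_Y(-1)|_X(-\Gamma))$; the isomorphism with $F := N_X(-1)[p_1 \to q_1][q_1 \to p_1]\cdots[q_3 \to p_3]$ follows from Lemma \ref{normal_bundle_nodal} together with Proposition \ref{mod-prop}(2), and a direct computation gives $\chi(F) = \chi(E)$. Interpolation for $F$ therefore yields interpolation for $N_Y(-1)(-D)$, which via Lemma \ref{check_one} applied to $N_Y(-1)$ with this specific $D$ gives interpolation for $N_Y(-1)$.

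Part (ii) proceeds analogously. The restriction-to-$Q$ computation now gives $N_Y|_{L_i} \simeq \O_{L_i}(2)^{\oplus 3}$ and $N_Y|_M \simeq \O_M(2)^{\oplus 3}$, and the identification $N_Y|_X \simeq N_X(\Gamma)[p_1 \to q_1][q_1 \to p_1]\cdots[q_3 \to p_3]$ is Lemma \ref{normal_bundle_nodal} directly. For the first implication I choose a divisor $D = D_Q + D_X$ with $D_Q$ of degree $9$ on $Q$ (annihilating $H^0(N_Y(-D_Q)|_Q)$ via the analogous node-evaluation argument) and $D_X$ general on $X$ of degree prescribed by Lemma \ref{check_one} for $N_Y$; Corollary \ref{h0_gluing_vb} then reduces interpolation for $N_Y(-D)$ to interpolation for $N_Y|_X(-D_X - \Gamma)$, which follows from interpolation for $N_Y|_X$ since $\Gamma$ and $D_X$ are general effective, and Lemma \ref{check_one} yields interpolation for $N_Y$.

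The second implication of part (ii), that interpolation for $N_X' = N_X[q_1+q_2+q_3 \to p_1]$ implies interpolation for $N_Y|_X$, will follow from a limiting argument in the spirit of Proposition \ref{limit}. The specialization $p_2, p_3 \to p_1$ is tree-like since $q_1, q_2, q_3$ span $N_X|_{p_1}$; the three modifications $[p_i \to q_i]$ coalesce at $p_1$ in three independent directions and collapse via iterated use of Proposition \ref{mod-prop}(4) to a twist by $\O(-2p_1)$, while the three modifications $[q_i \to p_i]$ all become $[q_i \to p_1]$ toward the same pointing subbundle and combine via Proposition \ref{mod-prop}(3) into $[q_1+q_2+q_3 \to p_1]$. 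Tracking the twists, the limit of $N_Y|_X$ is $N_X'(p_1+q_1+q_2+q_3)$; interpolation for $N_X'$ implies interpolation for this specialization via Lemma \ref{check_one} applied to effective divisors of the form $p_1+q_1+q_2+q_3 + D_0$ for $D_0$ general, and semicontinuity transfers interpolation back to the generic $N_Y|_X$. The main technical obstacle throughout is the explicit verification of the vanishing on $Q$: this requires tracking the hyperplane decomposition of each line's normal bundle along with all modification directions carefully enough that the cumulative effect at each node $m_i$ is correctly identified.
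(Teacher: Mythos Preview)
Your strategy matches the paper's: peel off the degenerate quartic $Q$ via Corollary~\ref{h0_gluing_vb}, then for the second implication of~(ii) limit $p_2, p_3 \to p_1$ and collapse the modifications exactly as you describe. Two differences are worth noting.

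First, your repeated appeals to Lemma~\ref{check_one} are mis-citations. In part~(i), what carries you from interpolation for $N_Y(-1)(-D)$ to interpolation for $N_Y(-1)$ is the semicontinuity principle (specialize a general divisor to one containing $D$), used implicitly in the ``and hence'' clauses of Lemmas~\ref{2-secant-easy} and~\ref{3-secant-twist}; Lemma~\ref{check_one} says something different. In part~(ii), when $\chi(N_Y)\equiv 2\pmod 3$ your choice $\deg D = \lceil \chi(N_Y)/3\rceil$ forces $\chi(N_Y(-D)) = -1$, so neither $N_Y(-D)$ nor $N_Y|_X(-\Gamma-D_X)$ can literally ``satisfy interpolation'' --- what you really need (and what follows from interpolation for $N_Y|_X$) is the single vanishing $h^0=0$, which then feeds into Lemma~\ref{check_one} correctly.

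Second, in part~(ii) the paper is more direct: rather than twisting by nine points on $Q$ to annihilate $H^0$ entirely (landing on $N_Y|_X(-\Gamma)$ and requiring an auxiliary $D_X$ to climb back up), it twists by only three points $x_i\in L_i$ and exploits that the evaluation maps
\[
H^0(\O_M(2)^{\oplus 3}) \to \O_M(2)^{\oplus 3}|_{m_1+m_2+m_3} \quad\text{and}\quad H^0(\O_{L_i}(1)^{\oplus 3}) \to \O_{L_i}(1)^{\oplus 3}|_{p_i+q_i}
\]
are \emph{isomorphisms}, not merely injections. This is the content of Lemma~\ref{3-secant} (as opposed to Lemma~\ref{3-secant-twist}), and it reduces directly to $F = N_Y|_X$ without any extra numerical bookkeeping.
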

\begin{proof}
As $L_1 \cup L_2 \cup L_3 \cup M$ spans $\pp^4$, the three directions of the $L_i$ at the points $m_i$ are independent.  
By Lemma \ref{3-secant-twist} (respectively Lemma \ref{3-secant}), it suffices to prove interpolation for $N_Y(-1)|_{X \cup_\Gamma (L_1 \cup L_2 \cup L_3)}(-m_1-m_2-m_3)$ (respectively $N_Y|_{X \cup_\Gamma (L_1 \cup L_2 \cup L_3)}$) in order to deduce interpolation for $N_Y(-1)$ (respectively $N_Y$).  

For each of the $L_i$, the three lines $T_{p_i}X$, $T_{q_i}X$ and $M$ span $\pp^4$,
and so we similarly have that $N_Y|_{L_i} \simeq \O_{L_i}(2)^{\oplus 3}$.  To prove interpolation for $N_Y(-1)$ and $N_Y$, we are free to pick our general points to include a choice of general point $x_i$ on each of the $L_i$.  Twisting down by this point for each $L_i$, we have
\[N_Y|_{L_i}(-x_i) \simeq \O_{L_i}(1)^{\oplus 3}, \qquad N_Y(-1)|_{L_i}(-m_i-x_i) \simeq \O_{L_i}(-1)^{\oplus 3}. \]
To prove interpolation for $N_Y(-1)$, by Lemma \ref{2-secant-easy}, we may peel off each line $L_i$ in turn to reduce to proving interpolation for
\[N_Y(-1)|_X(-p_1-q_1-p_2-q_2 -p_3-q_3). \]
To prove interpolation for $N_Y$, note that evaluation at the two points $p_i, q_i \in L_i \simeq \pp^1$ is an isomorphism
\[H^0(\O_{\pp^1}(1))^{\oplus 3} \xrightarrow{\ev} \O_{\pp^1}(1)^{\oplus 3}|_{p_i + q_i} .\]
Hence sections of $N_Y(-x_1-x_2-x_3)$ on $X$ extend uniquely to $X \cap L_1\cap L_2 \cap L_3$ and then to all of $Y$.   Therefore by Corollary \ref{h0_gluing_vb}, interpolation for $N_Y(-x_1-x_2-x_3)$ (and hence for $N_Y$) follows from interpolation for $N_Y|_X$.  The formula for $N_Y|_X$ follows directly from Lemma \ref{normal_bundle_nodal}.

Now limit $p_2$ and $p_3$ to $p_1$; the corresponding modification is tree-like as $q_1$, $q_2$,
and $q_3$ are independent.  By the properties of Proposition \ref{mod-prop}, the resulting bundle is therefore
\begin{align*}
N_X&(3p_1+q_1+q_2 +q_3)[p_1 \to q_1][q_1 \to p_1][p_1 \to q_2][q_2 \to p_1][p_1 \to q_3][q_3 \to p_1] \\
&\simeq N_X(p_1+q_1+q_2 +q_3)[p_1 \to q_1 + q_2 +q_3][q_1+q_2+q_3 \to p_1] \\
& \simeq  N_X(p_1+q_1+q_2 +q_3)[q_1+q_2+q_3 \to p_1].
\end{align*}
Interpolation for this bundle therefore follows from interpolation for $N_X[q_1+q_2+q_3 \to p_1]$.
\end{proof}

\subsection{Degree 11, genus 8}\label{11_8}

By Lemma \ref{3_secant}, we may reduce interpolation for the twisted normal bundle of a general BN-curve of degree $11$ and genus $8$ to interpolation for the modified normal bundle $N_X(-1)[p_1 \to q_1][q_1 \to p_1][p_2 \to q_2][q_2 \to p_2][p_3 \to q_3][q_3 \to p_3] $, where $X$ is a BN-curve of degree $7$ and genus $3$.  

We will further degenerate using the following.  As before, $N_X'$ denotes some modification of $N_X$, which is isomorphic to $N_X$ over some open containing all of $L$.  $N_C'$ is then the bundle on $C$ obtained by gluing $N_X'|_{C\smallsetminus C\cap L}$ to $N_C$ along this open.

\begin{lem}\label{mod_on_secant}
Let $C$ be a BN-curve.  Let $L$ be a $2$-secant line meeting $C$ at two general points $x$ and $y$; write $X $ for the union $X \colonequals C\cup_{x+y} L$.  Suppose that $p_1$ and $p_2$ are general points on $L$ and $q_1 and 1_2$ are general points on $C$, and $D_1$ and $D_2$ are divisors on $C$.
If the bundle
\[N_C'(-1)(y)[x \to y][2y \to x][D_1 \to x][D_2 \to x] \]
on $C$ satisfies interpolation, then the bundle
\begin{equation}\label{starting-bundle}N_X'' \colonequals N_X'(-1)[p_1 \to q_1][D_1 \to p_1][p_2 \to q_2][D_2 \to p_2]\end{equation}
on $X$ satisfies interpolation.
\end{lem}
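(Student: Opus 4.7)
\begin{skproof}
The strategy is to restrict $N_X''$ to $L$ and apply Corollary~\ref{h0_gluing_vb}. By Lemma~\ref{normal_bundle_nodal}, combined with the assumption that $N_X'$ agrees with $N_X$ in a neighborhood of $L$, we have
\[N_X''|_L = N_L(-1)(x+y)[x \to T_xC][y \to T_yC][p_1 \to q_1][p_2 \to q_2].\]

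First I would specialize $p_1 \to x$ and $p_2 \to x$ along $L$; each specialization is tree-like because $T_xC$, $q_1$, $q_2$ are in linearly general position in $N_L|_x$. By Proposition~\ref{mod-prop}(4) applied iteratively, the three rank-$1$ modifications at $x$ combine; since $T_xC \oplus q_1 \oplus q_2 = N_L|_x$, their combination is trivial modulo an extra $(-2x)$ twist. The restriction becomes
\[N_X''|_L \simeq N_L(-1)(-x+y)[y \to T_yC] \simeq \O_L \oplus \O_L(-1)^{\oplus 2},\]
which has $h^0 = 1$; the unique section $\sigma_L$ does not vanish at $x$ or $y$. The concurrent specialization on the $C$-side sends each $[D_i \to p_i]$ to $[D_i \to x]$, so by Lemma~\ref{normal_bundle_nodal},
\[N_X''|_C = N_C'(-1)(x+y)[x \to y][y \to x][D_1 \to x][D_2 \to x].\]

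By Corollary~\ref{h0_gluing_vb}, interpolation for $N_X''$ follows from interpolation for the bundle $F$ on $C$ whose sections are those $\sigma \in H^0(N_X''|_C)$ whose boundary values at $\{x, y\}$ match those of $\sigma_L$ in the nodal fiber, provided $\rk F = \rk N_X''$ and $\chi(F) = \chi(N_X'')$. Analyzing how $\sigma_L(x)$ and $\sigma_L(y)$ sit in the nodal fibers relative to the Hartshorne--Hirschowitz modifications of $N_X''|_C$, the gluing imposes no new constraint at $x$ (being subsumed by the existing $[x \to y]$), while at $y$ it contributes an additional $[y \to x]$ constraint which combines with the existing $[y \to x]$ via Proposition~\ref{mod-prop}(3) to yield $[2y \to x]$, with a cancellation absorbing one factor of the $(x)$ twist. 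The resulting bundle is
\[F = N_C'(-1)(y)[x \to y][2y \to x][D_1 \to x][D_2 \to x],\]
matching the target, and $\chi(F) = \chi(N_X'')$ by a direct Euler characteristic computation. Since $F$ satisfies interpolation by hypothesis, so does $N_X''$.

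The hard part will be the careful analysis of the gluing condition at the two nodes: tracking the specific directions that $\sigma_L(x)$ and $\sigma_L(y)$ occupy in $N_L|_x$ and $N_L|_y$, and determining how these interact with the Hartshorne--Hirschowitz modifications on the $C$-side. This is what produces the asymmetry between $[x \to y]$ at $x$ and $[2y \to x]$ at $y$ in the final target bundle.
\end{skproof}
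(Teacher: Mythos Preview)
Your outline is right --- restrict to $L$, identify the unique section, and use Corollary~\ref{h0_gluing_vb} --- but the heart of the argument is precisely the step you wave away, and your description of it is not correct.

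First, a bookkeeping error. If you genuinely specialize $p_1,p_2 \to x$ \emph{on the bundle $N_X''$} and then restrict, the modifications $[x\to q_1][x\to q_2]$ now sit at the node and therefore also appear in $N_X''|_C$; your formula for $N_X''|_C$ omits them. The paper avoids this by never specializing the bundle: it computes $N_X''|_L$ and $N_X''|_C$ for \emph{general} $p_1,p_2$, applies Lemma~\ref{h0_gluing} to get a space $V$ of sections of $N_X''|_C$ cut out by the gluing data $(\sigma_0,\sigma_1)$, and only then limits $p_1,p_2\to x$ inside an explicit formula for $(\sigma_0,\sigma_1)$.

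Second, and more seriously, your claim that ``the gluing imposes no new constraint at $x$'' is wrong. The whole point of the paper's explicit coordinate computation is to show that the limit of $(\sigma_0,\sigma_1)$ is $(0,\,y')$. The vanishing $\sigma_0' = 0$ is what forces $\tau(x)=0$ and produces the full twist $(-x)$ in $F$; without it, the condition $(\tau(x),\tau(y)) \in \langle(\sigma_0,\sigma_1)\rangle$ couples the two nodes by a common scalar and is \emph{not} the $H^0$ of any subsheaf, so Corollary~\ref{h0_gluing_vb} cannot be applied. Your observation that $\sigma_L$ is nonvanishing as a section of $\O_L\oplus\O_L(-1)^{\oplus 2}$ is correct but beside the point: what matters is its image in the nodal fiber of the \emph{unspecialized} $N_X''$, viewed as a meromorphic section of $N_L(-1)$, and that image does vanish at $x$ in the limit. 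This is not something one can read off from the splitting type alone; it requires the computation in coordinates that the paper carries out (writing $\sigma_t$ explicitly, evaluating $\sigma_0,\sigma_1$ as functions of $a,b$, and taking $a,b\to 0$), followed by an appeal to \cite[Lemma~8.4]{joint} to translate ``points toward $y'$ on the $L$ side'' into ``$[y\to x]$ on the $C$ side.''
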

\begin{proof}
Let $x' \in T_xC$ and $y' \in T_y C$ be choices of points on the tangent lines
distinct from $x$ and $y$ respectively.
If we restrict the bundle $N_X'$ of \eqref{starting-bundle} to $L$, we have
\[N_X''|_L \simeq N_L(-1)(x+y)[x\to x'][y \to y'][p_1 \to q_1][p_2 \to q_2] \simeq \O_L \oplus \O_L(-1)^{\oplus 2},  \]
which has one global section.  We will do a calculation in local coordinates to more precisely understand the behavior of this section at $x$ and $y$ as a meromorphic section of $N_L(-1)$.
  
The twisted normal bundle $N_L(-1)\simeq V \otimes \O_L$ is trivial of rank $3$ on $L$ (with $V \simeq H^0(N_L(-1)$).  We may therefore assign, to points of $\pp^4 \smallsetminus L$, the corresponding normal direction to $L$ in $\pp V$.
Choose appropriate coordinates for $V$ so that
\[x' \mapsto [1:0:0], \ y' \mapsto [0:1:0], \ q_1 \mapsto [1:0:1], \ q_2 \mapsto [0:1:1].\]
In addition let $t$ be an affine coordinate on $L \simeq \pp^1$ so that $x$ corresponds to $t=0$, $y$ corresponds to $t=1$, and $p_1$ and $p_2$ correspond to $t=a$ and $t=b$ respectively.  Then the unique section of $N_X'|_L$ up to scaling is given by (scalar multiples of) the tuple 
\[ \sigma_t = \left[ \frac{a(t-b)}{t} , \frac{(1 - b)(t-a)}{t-1} , a - b \right] \] 
as a meromorphic function valued in $ V$. 
The image of this section under the restriction map to $x$ (i.e. $t=0$) is
\[\sigma_0 = \left[-ab , a(1 - b) , a - b \right]\]
and under restriction to $y$ (i.e. $t=1$) is
\[\sigma_1 = [a(1-b) , (1-a)(1-b) , a-b]. \]
By Lemma \ref{h0_gluing}, it suffices to prove interpolation for the space of sections of $N_X''|_C$ whose values at $x$ and $y$ are in the $1$-dimensional subspace given by $(\sigma_0,\sigma_1)$ in $ N_{C\cup_{x+y} L}'(-1)|_{x + y}$.  As interpolation is open, we may limit $p_1$ and $p_2$ to $x$, in which case
$\sigma_0$ and $\sigma_1$ limit to
\[ \sigma_0' = [ 0 , 0 , 0], \qquad \sigma_1' = [0,1,0]. \]
Therefore the sections of $N_X''|_L$ must vanish at $x$ and point towards $y'$ at $y$.  By \cite[Lemma 8.4]{joint}, the bundle $N_X''|_L[y \to x]$ glues to $N_X''|_L[y \to y']$ at the point $y$.
Therefore it suffices to prove interpolation for the vector bundle
\begin{align*}
N_{C \cup_{x+y} L}'&(-1)[D_1 \to x][D_2 \to x]|_C(-x)[y \to x] \\
&\simeq N_C'(-1)(y)[x \to y][2y \to x][D_1 \to x][D_2 \to x]
\end{align*}
on $C$.
\end{proof}

By Theorem~1.6 of \cite{rbn}, we may degenerate our BN-curve of degree $7$
and genus $3$ to the union of 
a BN-curve of degree $6$ and genus $2$ and a $2$-secant line $L$ meeting $C$ at points $x$ and $y$.  Do this so the points $p_1$ and $p_2$ specialize on to $L$ and the remainder of the marked points specialize to $C$.  Then we apply Lemma \ref{mod_on_secant} with $N_X' = N_X[p_3 \to q_3][q_3 \to p_3]$ and $D_1=q_1$ and $D_2= q_2$.  We thus reduce to proving interpolation for the bundle
\[N_C(-1)(y)[x \to y][2y \to x][q_1 \to x][q_2 \to x][p_3 \to q_3][q_3 \to p_3].\]

By Theorem~1.6 of \cite{rbn}, we may further degenerate $C$
to the union of a curve $D$ of degree $5$ and genus $1$ and a $2$-secant line $M$, meeting $D$ at points $z$ and $w$.  Let the points $p_3$ and $q_1$ specialize to general points on $L$, and $x$, $q_3$, $q_2$, and $y$ specialize to general points on $D$.  Again we apply Lemma \ref{mod_on_secant} with $p_1 = p_3$, $p_2 = q_1$, $q_1 = q_3$, $q_2 = x$, $D_1 = q_2$ and $D_2 = \emptyset$.  It therefore suffices to prove interpolation for the bundle
\[N_D(-1)(y+w)[x\to y][2y \to x][q_2 \to x][q_3 \to z][2w \to z][z \to w] \]
on $D$.  We may limit $w$ to $x$, to obtain the bundle
\[N_D' \colonequals N_D(-1)(x+y)[x\to y][2y \to x][q_2 \to x][q_3 \to z][2x \to z][z \to x], \]
which has Euler characteristic $0$.  This bundle sits in a balanced exact sequence
\[0 \to N_{D \to x}(-1)(y-q_3 -2x) \to N_D' \to \pi_x|_{D}^*N_{\pi_x|_{D}}(-1)(x-y-q_2-z)[x\to y][q_3 \to z][2x \to z] \to 0, \]
Therefore by \cite[Proposition 4.16]{joint} and the fact that as a nonspecial line bundle $N_{D \to p}(-1)(y-q_3-2x)$ satisfies interpolation \cite[Propsition 4.7]{joint}, it suffices to prove that 
\begin{align*}
\pi_x|_{D}^*N_{\pi_x|_{D}}(-1)(x-y-q_2-z)[x\to y][q_3 \to z][2x \to z] & \simeq \pi_x|_{D}^*N_{\pi_x|_{D}}(-1)(-y-q_2-z)[q_3 + x \to z][x \to y + z] \\ &\simeq \pi_x|_{D}^*N_{\pi_x|_{D}}(-1)(-y-q_2-z)[q_3 + x \to z]
\end{align*}
satisfies interpolation as a vector bundle on $D$.  As $\pi_x|_{D}^*\O(1)(y + q_2 + z)$ is a general line bundle on $D$, it suffices to show that $\pi_x|_{D}^*N_{\pi_x|_{D}}[q_3 + x \to z]$ satisfies interpolation.
This is verified by publicly available code in \cite[Appendix B]{joint}
to prove interpolation for
certain modifications of normal bundles of nonspecial curves:

\begin{verbatim}
>>> good(Curve(4,1,3).add(P100,2))
True
\end{verbatim}


\subsection{Degree 9, genus 6}\label{9_6}

By Lemma \ref{3_secant}, interpolation for the normal bundle of a general curve of degree $9$ and genus $6$ follows from interpolation for the modified normal bundle
\[N_C' \colonequals N_C[q_1+q_2+q_3 \to p]\]
of a curve $C$ of degree $5$ and genus $1$ in $\pp^4$.
This is verified by publicly available code in \cite[Appendix B]{joint}:

\begin{verbatim}
>>> good(Curve(5,1,4).add(P100,3))
True
\end{verbatim}

%

\subsection{Degree 10, genus 7} \label{10_7}

As above, by \ref{3_secant}, it suffices to show that for a BN-curve $X \to \pp^4$ of degree $6$ and genus $2$, and $p, q_1, q_2, q_3$ general points of $X$, the modified normal bundle
\[N_X' = N_X[q_1+q_2+q_3 \to p] \]
satisfies interpolation.  This is verified by publicly available code in \cite[Appendix B]{joint}:

\begin{verbatim}
>>> good(Curve(6,2,4).add(P100,3))
True
\end{verbatim}

\section{Proofs of the Main Theorems}\label{proofs}

In this section we wrap up the preceding computations to prove the theorems and corollaries quoted in the introduction.

\subsection{Proof of Theorem~\ref{main}}

As interpolation is an open condition \cite[Theorem 5.8]{nasko}, it suffices to find one BN-curve $C$ of every possible degree $d$ and genus $g$ (with $\rho(d,g,4) \geq 0$) whose twisted normal bundle $N_C(-1)$ satisfies interpolation.  By Proposition \ref{cut_to_finite}, this follows for all BN-pairs $(d,g) \notin \{(6,2), (8,5), (9,6), (10,7)\}$ from checking interpolation for BN-curves with $d$ and $g$ in the following finite list
\[\{(9,5), (11,8), (12,10), (13,10), (13,11), (14,12)\}. \]

For $(d,g) \in \{(12,10), (13,10), (13,11), (14,12)\}$, this was proved in Sections \ref{12_10} -- \ref{14_12} using a degeneration containing a canonical curve.  For $(d,g) = (11,8)$, this was proved in Section \ref{11_8} using a degeneration with a rational quartic curve as well as several 2-secant degenerations.  Let us show that the twisted normal bundle $N_C(-1)$ of a general BN-curve $C$ of degree $9$ and genus $5$ satisfies interpolation.  As $\chi(N_C(-1)) = 14 \equiv -1 \pmod{3}$, it suffices by Lemma \ref{check_one} to show that for $5$ general points $p_1, \ldots, p_5$, the twist down satisfies
\[h^0(N_C(-1)(-p_1- \cdots -p_5)) = 0. \]
As the genus of $C$ is $5$, the line bundle $\O_C(1)(p_1 +\cdots +p_5)$ is general of degree $14$.  Therefore this vanishing follows from interpolation for the untwisted normal bundle $N_C$, which is a special case of
\cite[Theorem 1.3]{joint}.

For the converse, we note that $h^1(N_C(-1)) \neq 0$
if $C \to \pp^4$ is a BN-curve of degree and genus in $\{(8, 5), (9, 6), (10, 7)\}$
by \cite[Theorem 1.6]{quadrics}.
Moreover, $N_C$ does not satisfy interpolation
if $C \to \pp^4$ is a BN-curve of degree $6$ and genus $2$.
\qed

\subsection{Proof of Theorem~\ref{notwist}}

For any BN-curve $C$, if the twisted normal bundle $N_C(-1)$ satisfies interpolation, then the untwisted normal bundle $N_C$ does as well \cite[Proposition 4.11]{joint}.  Therefore for all BN-pairs $(d,g) \notin \{(6,2), (8,5), (9,6), (10,7)\}$, this is a corollary of Theorem~\ref{main}.  If $C$ is of degree $6$ and genus $2$, then $C$ is nonspecial and $N_C$ was already observed to fail to satisfy interpolation in \cite{joint}.  In Sections \ref{9_6} and \ref{10_7} we proved interpolation for $(d,g) = (9,6)$ (respectively $(d,g) =(10,7)$). 

A BN-curve $C$ of degree $8$ and genus $5$ in $\pp^4$ is a canonical curve,
which is a complete intersection of $3$ quadrics; its normal bundle is therefore
\[N_C \simeq (K_C^{\otimes 2})^{\oplus 3}. \]
Interpolation for $N_C$ thus reduces to interpolation
for
$K_C^{\otimes 2}$, which holds since $K_C^{\otimes 2}$ is nonspecial \cite[Proposition 4.7]{joint}.
\qed

\subsection{Proof of Corollaries~\ref{cor_twist} and~\ref{cor_notwist} }

When $\rho(d,g,4) \geq 0$, write $\bar{M}_{g,n}(\pp^4, d)^\circ$ for the unique component dominating $\bar{M}_g$ of the Kontsevich space of $n$-marked genus $g$ curves of degree $d$ in $\pp^4$.  If $n \leq f(d,g,4)$, and $p_1, \ldots, p_n$ are general points on $C$, then $\chi(N_C(-p_1-\cdots-p_n)) \geq 0$ and interpolation for $N_C$ implies that $h^1(N_C(-p_1-\cdots-p_n)) = 0$.  Therefore the map extracting the image of the $n$ marked points
\[\bar{M}_{g,n}(\pp^4, d)^\circ \to (\pp^4)^n \]
is smooth at the point $(f \colon C \to \pp^4, p_1, \ldots, p_n)$, and hence generically smooth.  Therefore the map is dominant.  For degree $6$ and genus $2$, even though the normal bundle does not satisfy interpolation; however it is still true that for $9 = f(6,2,4)$ general points $p_1, \ldots, p_9$ on $C$, the twist $h^1(N_C(-p_1-\cdots -p_9))=0$.  Therefore, the above evaluation map is still generically smooth and a general curve of this degree and genus still passes through $9$ general points \cite[Corollary 1.4]{joint}.

Similarly, fix a hyperplane $H \subset \pp^4$; then interpolation for $N_C(-1)$ implies that the map extracting the section by $H$ (when it is transverse) and the image of the $n$ points
\[ \bar{M}_{g,n}(\pp^4, d)^\circ \dashrightarrow H^d \times (\pp^4)^n \]
is smooth, and hence dominant, for $n \leq f(d,g,4) - d$.  In the case $(d,g)=(6,2)$, if $p_1, p_2, p_3$ are general points on $C$, then as $3 > 2$, we have that $\O_C(1)(p_1 + p_2 + p_3)$ is a general line bundle of degree $9 = f(6,2,4)$.  Therefore the fact that $C$ passes through $9$ general points implies that its hyperplane section is general and it passes through $3$ independently general points.

We conclude with the three cases $(d,g) \in \{(8,5), (9,6), (10,7)\}$.  In these cases $h^1(N_C(-1)) \neq 0$, and hence the hyperplane section is not a general collection of $d$ points in $\pp^3$; however $d-1$ of these points are general, and all $d$ points are general subject to the condition that they are distinct points
on the complete intersection of $11 - d$ quadrics
\cite[Theorem 1.6]{quadrics}.

\subsection{Degree 8, genus 5}

Let $D = p_1 + \cdots +p_{7}$ be a collection of $d-1$ points in a general hyperplane section, then we will show that $N_C(-D)$ satisfies interpolation.  We have, 
\[\chi(N_C(-D)) = 15 \geq \genus(C)\cdot \rk(N_C).\]
Therefore interpolation follows from interpolation for $N_C$ as in \cite[Proposition 4.12]{joint}.  As above this suffices to show that a general curve of degree $8$ and genus $5$ passes through $f(8,5,4) = 12$ points that are general subject to the constrain that $7$ of them lie in a hyperplane.

In fact, something a priori stronger follows.  The hyperplane section of a curve of degree $8$ and genus $5$ (which is the complete intersection of $3$ quadrics in $\pp^4$) is the complete intersection of $3$ quadrics in $\pp^3$;
thus, some subset $D$ of $7$ of them determines the $8$th.  Therefore any canonical curve in $\pp^4$ passing through these $7$ general points $D$ in a hyperplane passes through the $8$th point automatically.  In this way, the previous result implies that one may pass a curve of degree $8$ and genus $5$ through $f(d,g,4)+1$ points that are general subject to the constraint that $d$ lie in a hyperplane and
are distinct points on the complete intersection of $11 - d$ quadrics.

\subsection{Degree 9, genus 6}

We first show that for $D = p_1 + \cdots +p_{8}$ a collection of $d-1$ general points in a hyperplane, $N_C(-D)$ satisfies the property of interpolation.  As above this implies the corollary about passing a general such curve through $f(d,g,4)$ points subject to the constraint that $d-1$ lie in a hyperplane.
When $(d,g) = (9,6)$, we have $\chi(N_C(-D)) = 16$.   It suffices, therefore, (by Lemma~\ref{check_one}) to show that for a general divisor $\Gamma$ of degree $5$, the twist down has no higher cohomology:
\[h^1(N_C(-\Gamma-D)) = 0.\]
As $D = \O_C(1)(-p)$, with $p$ general on $C$, the above twist $N_C(-\Gamma-D) \simeq N_C(-1)(p-D)$.  If $\Gamma$ is a general collection of $5$ points, then $\O_C(p-D)$, and hence $\O_C(-1)(p-D)$, is a general line bundle.  The result therefore follows from Theorem~\ref{notwist}.

In a slightly different direction, we show that a curve of degree $9$ and genus $6$ in $\pp^4$ passes through $f(9,6,4)= 13$ points that are general subject to the constrain that $d=9$ of them lie in a hyperplane and lie on the complete
intersection of $2$ quadrics,
i.e.\ an elliptic normal curve.  Note that an elliptic normal curve in $\pp^3$ passes through $8$ general points and is determined by $8$ general points on the curve.

Fix $9$ general points $p_1, \ldots, p_9$ on an elliptic normal curve $E \subset H \simeq \pp^3$ and $5$ other general points $q_1, \ldots, q_5$ in $\pp^4$.  We will show that there exists a curve of degree $9$ and genus $6$ through the first $13$ points $p_1 + \cdots + p_9 + q_1 + \cdots + q_4$ but not through all $14 = f(9,6,4)$ points.

Let $L$ be a line in $\pp^4$ which is general relative to $p_1 , \ldots, p_7$ and $q_1, q_2, q_3$
(and so in particular disjoint from $p_1, \ldots, p_7, q_1, q_2, q_3$).  

\begin{lem}\label{sm_dP}
The subscheme 
\[q_1 \cup q_2 \cup q_3 \cup p_1 \cup \cdots \cup p_7 \cup L \subset \pp^4\]
lies on a smooth del Pezzo surface of degree $4$.  
\end{lem}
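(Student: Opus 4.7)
The plan is to exhibit the smooth del Pezzo as the base locus of a pencil of quadrics containing the subscheme, in three steps: (i) a dimension count producing the pencil; (ii) a check that the pencil cuts out a surface of degree $4$; and (iii) a verification of smoothness.

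For (i), I would compute $\dim V$, where $V \subseteq H^0(\pp^4, \O(2))$ is the space of quadrics vanishing on the subscheme. Since $\O_E(2)$ has degree $8$ on the genus-$1$ curve $E$, it is nonspecial with $h^0 = 8$; hence seven general points on $E$ satisfy $h^0(\O_E(2)(-p_1 - \cdots - p_7)) = 1$, i.e., they impose $7$ independent conditions on $H^0(\O_E(2))$, and thus (by surjectivity of the composition $H^0(\pp^4, \O(2)) \twoheadrightarrow H^0(\O_H(2)) \twoheadrightarrow H^0(\O_E(2))$) on $H^0(\pp^4, \O(2))$. The three general points $q_1, q_2, q_3$ then impose $3$ further independent conditions, and requiring containment of the general line $L$ imposes $3$ more (the restriction from the resulting $5$-dimensional subspace to $H^0(\O_L(2)) \cong \mathbb{C}^3$ is surjective for generic $L$). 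Hence $\dim V = 15 - 13 = 2$, giving a pencil $\pp V \cong \pp^1$.

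For (ii), the base locus $S$ of this pencil is the intersection of any two quadrics in $V$, so by Bezout $S$ is $2$-dimensional of degree $4$, provided no two members of $V$ share a common divisor. A common divisor would force every quadric in $V$ to split off a fixed linear or quadratic factor, which is incompatible with the $2$-dimensional count under the generic position of $q_1, q_2, q_3$ and $L$. Thus $S$ is a complete intersection surface of degree $4$ containing the given subscheme.

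For (iii), smoothness is the main obstacle. It is an open condition on the Grassmannian of pencils of quadrics in $\pp^4$, and a generic pencil classically cuts out a smooth del Pezzo of degree $4$. I would certify that our constrained pencil lies in this smooth locus by specializing the data onto a known smooth del Pezzo: fix a smooth $S' \subset \pp^4$ of degree $4$, choose a line $L' \subset S'$, three points $q_i' \in S'$, a generic hyperplane $H'$, and seven points on the elliptic normal curve $E' = S' \cap H'$. The pencil constructed from this configuration then has $S'$ as its smooth base locus, so by semicontinuity of smoothness in the parameter space, smoothness holds for generic data of the type considered in the lemma.
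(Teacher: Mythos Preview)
Your approach via the pencil of quadrics is a legitimate alternative to the paper's deformation-theoretic argument, but step~(iii) contains a real gap that, once filled, essentially reproduces the paper's computation.

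In step~(iii) you specialize the configuration to one lying on a fixed smooth del Pezzo $S'$ and assert that ``the pencil constructed from this configuration then has $S'$ as its smooth base locus.'' For this you must know that the specialized configuration still imposes $13$ independent conditions on $H^0(\O_{\pp^4}(2))$; otherwise the linear system through it could have dimension $>2$ and need not cut out $S'$. Upper semicontinuity goes the wrong way here: the dimension of the linear system can only jump \emph{up} under specialization, so the generic count in step~(i) does not help. Verifying independence at the specialized point is exactly what the paper does. Since $N_{S'}\simeq\O_{S'}(2)^{\oplus 2}$ and $h^0(\O_{S'}(2))=13$, one must show that
\[
H^0(\O_{S'}(2)) \longrightarrow H^0\bigl(\O_{S'}(2)|_{q_1'\cup q_2'\cup q_3'\cup p_1'\cup\cdots\cup p_7'\cup L'}\bigr)
\]
is surjective (hence an isomorphism). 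The paper carries this out by peeling off the three general $q_j'$, then reducing the remaining surjectivity to the statement that quadrics on $H'\cup L'$ vanishing along $L'$ surject onto $\O(2)|_{p_1'\cup\cdots\cup p_7'}$ --- which in turn uses that $p_1',\ldots,p_7',\,L'\cap H'$ are eight general points of $\pp^3$. Once this is in place, your openness argument goes through.

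The paper simply packages the same computation differently: rather than constructing the pencil, it shows the map from marked smooth del Pezzo surfaces to configurations is smooth at $(S',\Gamma',L')$, using $h^1(N_{S'})=0$ together with the surjectivity above, and concludes dominance. Your framing has the virtue of exhibiting the surface concretely as a base locus, but the essential content --- the surjectivity check on $S'$ --- is identical, and you have not supplied it. Steps~(i) and~(ii) also lean on claims (surjectivity onto $H^0(\O_L(2))$ for the generic line, absence of a common component in the pencil) that are most cleanly justified by the very same specialization, so in practice the entire argument rests on that one computation.
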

\begin{proof}
We will show that if $S$ is some del Pezzo surface, $L$ is a line on $S$, and $q_1, q_2, q_3$ are general points on $S$ and $p_1, \ldots, p_7$ are general points in a hyperplane section of $S$, then
\[H^0(N_S) \to H^0(N_S|_{q_1 \cup q_2 \cup q_3 \cup p_1 \cup \cdots \cup p_7 \cup L})\]
is surjective, and $h^1(N_S) = 0$, so that these points and line are general by deformation theory.  As $N_S \simeq \O_S(2)^{\oplus 2}$, we see from Kodaira vanishing that $h^1(N_S) = 0$.
As the points $q_i$ are general on $S$ and so impose independent conditions on sections of a line bundle,  it suffices to show that
\[H^0(\O_{S}(2)) \to H^0(\O(2)|_{ p_1 \cup \cdots \cup p_7 \cup L})\]
is surjective, for which it suffices to show the composition
\[H^0(\O_{\pp^4}(2)) \to H^0(\O_{\pp^3 \cup L}(2)) \to H^0(\O(2)|_{ p_1 \cup \cdots \cup p_7}) \oplus H^0(\O_L(2)) \]
is surjective.  Consider the second map first; the map evidently surjects onto $H^0(\O_L(2))$, so it suffices to check that the kernel surjects onto $H^0(\O_{p_1 \cup \cdots \cup p_7}(2))$.
Write $p = L \cap H$. Note that any quadric on $H$ vanishing at $p$ can be extended to a quadric on $\pp^4$
vanishing on $L$ (for example by pullback under the projection map from some point on $L \smallsetminus \{p\}$).
The restriction of the kernel to $H$ is therefore isomorphic to $H^0(\O_{\pp^3}(2) \otimes \I_p)$, where $\I_p$ is the ideal sheaf of the point $p$, which surjects onto $H^0(\O_{p_1 \cup \cdots \cup p_7}(2))$ as the $8$ points $p_1, \ldots, p_7, p$ are general in $\pp^3$.
\end{proof}

To complete the proof, define a point $p$ on $E$ by the relation
\[\O_E(p_1 + \cdots + p_9)(-2) = \O_E(p).\]
Note that $p$ is general in $H$ independent from $p_1, \ldots, p_7, q_1, q_2, q_3, q_4$.
Let $L$ be the line in $\pp^4$ joining $p$ and $q_4$.  Then by Lemma~\ref{sm_dP} above, there exists a unique smooth del Pezzo surface $S$ through $q_1 \cup q_2 \cup q_3 \cup p_1 \cup \cdots \cup p_7 \cup L$.  The hyperplane section $S \cap H$ is an elliptic normal curve containing the $8$ points $p_1, \ldots, p_7$ and $p$ on $E$, and hence must be equal to $E$.  It therefore contains the $13$ points $p_1, \ldots, p_9$ and $q_1, \ldots, q_4$.

The curve class $2H + L$ is base-point-free of degree $9$ and arithmetic genus $g$ satisfying
\[2g - 2 = (H + L)\cdot (2H+L) = 8 + 3 - 1 =10 \qquad \Rightarrow \qquad g = 6. \]
The general member of this linear system on $S$ is therefore a smooth (hence BN by \cite{iliev}) curve of degree $9$ and genus $6$ in $\pp^4$.  It suffices only to show that such a curve can be passed through $4$ general points on $S$ and $8$ general points in a hyperplane section.  By deformation theory, it suffices to show that $h^1(N_{C/S}(-p_1-\cdots-p_8 - q_1 - \cdots - q_4)) = 0$.  We have
\begin{align*}
h^1(N_{C/S}(-p_1-\cdots-p_8 - q_1 - \cdots - q_4)) &= h^1(\O_C(2H+L)(-p_1-\cdots-p_8 - q_1 - \cdots - q_4)) \\
&=h^1(\O_C(H+L)(p_9- q_1 - \cdots - q_4)) \\
&= h^1(K_C(p_9)(-q_1 - \cdots - q_4)). 
\end{align*}
As $h^1(K_C(p_9)) = 0$, and $q_1, \ldots, q_4$ are general points, the twist $K_C(p_9)(-q_1-\cdots-q_4)$ is still nonspecial as desired.


Finally let us show that there does not exist a curve of degree $9$ and genus $6$ passing through all $14$ points $p_1, \ldots, p_9$ and $q_1, \ldots, q_5$.  The strategy will be similar to the constructive method above, however we will find a smooth del Pezzo containing the first $8$ (general) points $p_1, \ldots, p_8$, and $q_1, \ldots, q_5$; using the fact that any curve of degree $9$ and genus $6$ containing these points must lie on this surface, we show that such a curve cannot pass through $p_9$.

\begin{lem}\label{sm_dP_2}
There is a unique smooth del Pezzo surface of degree $4$ through the points
\[p_1, \ldots, p_8, q_1, \ldots, q_5, \]
that are general subject to the constraint that the $p_i$ lie in a general hyperplane.
\end{lem}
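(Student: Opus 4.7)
A smooth del Pezzo surface of degree $4$ in $\pp^4$ is a smooth complete intersection of two quadrics, and it is recovered uniquely as the base locus of the associated pencil. The plan is therefore to prove that the $13$ points lie on a unique pencil $P$ of quadrics in $\pp^4$, and that $\Bl(P)$ is smooth.

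For uniqueness of the pencil I would use the restriction sequence
\[0 \to H^0(\O_{\pp^4}(1)) \to H^0(\I_{p_1 + \cdots + p_8}(2)) \to H^0(\I^H_{p_1 + \cdots + p_8}(2)) \to 0,\]
in which the left map is multiplication by the linear form $L$ cutting out $H$. Since $p_1, \ldots, p_8$ are general in $H \simeq \pp^3$, they lie on a unique elliptic normal curve $E$ and impose independent conditions on $H^0(\O_{\pp^3}(2))$; the right-hand term thus equals $H^0(\I_E(2))$ and has dimension $2$, giving $\dim H^0(\I_{\{p_i\}}(2)) = 7$. Let $V_i$ denote the further subspace vanishing at $q_1, \ldots, q_i$. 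The key observation is that the restriction $V_{i-1} \to H^0(\I_E(2))$ remains surjective for $i \leq 5$: any lift of an element of the target can be adjusted by a multiple $L \cdot M$ to vanish at $q_1, \ldots, q_{i-1}$, which imposes at most $4$ linear conditions on the $5$-dimensional space of choices of $M$. Consequently $\Bl(V_{i-1}) \cap H = E$, so $\Bl(V_{i-1})$ is a proper subscheme of $\pp^4$ and a general $q_i$ lies outside it; inductively $\dim V_i = 7 - i$, and $i = 5$ yields a unique pencil $P$.

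The base locus $\Bl(P)$ is then a complete intersection of two quadrics, hence a (possibly singular) surface of degree $4$. Smoothness of $\Bl(P)$ is an open condition on the data $(H, (p_i), (q_j))$, so it suffices to exhibit a single admissible instance in which $\Bl(P)$ is smooth. For this, one can start with a smooth del Pezzo $S_0 \subset \pp^4$ and a general hyperplane $H$ (so that $S_0 \cap H$ is a smooth elliptic normal curve), and choose $p_1, \ldots, p_8$ general on $S_0 \cap H$ together with $q_1, \ldots, q_5$ general on $S_0$. The verification that the preceding dimension count still applies to this specialization --- that generic $p_i$ on the elliptic normal curve $E = S_0 \cap H$ impose independent conditions on $H^0(\O_H(2))$, which follows from Riemann--Roch on $E$ via the isomorphism $H^0(\O_H(2))/H^0(\I_E(2)) \simeq H^0(\O_E(8))$, together with the fact that each $q_j$ evades $\Bl(V_{j-1})$, which cannot contain $S_0$ lest $V_{j-1}$ be contained in the $2$-dimensional defining pencil of $S_0$, contradicting $\dim V_{j-1} \geq 3$ --- yields $P$ equal to the defining pencil of $S_0$, whence $\Bl(P) = S_0$ is smooth. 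The main obstacle is this final smoothness verification rather than the dimension count, which is routine.
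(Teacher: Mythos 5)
Your proposal is correct, and its second half takes a genuinely different route from the paper. The uniqueness-of-the-pencil step is the same dimension count as in the paper (you package it via the residual sequence $0 \to H^0(\O_{\pp^4}(1)) \to H^0(\I_{\{p_i\}}(2)) \to H^0(\I_{\{p_i\},H}(2)) \to 0$; the paper restricts to $H$ directly, but the content is identical). For existence of a \emph{smooth} surface through a general constrained configuration, the paper argues by deformation theory of the marked surface: $N_S \simeq \O_S(2)^{\oplus 2}$ has vanishing $h^1$ by Kodaira vanishing, and $H^0(N_S) \to H^0(N_S|_\Gamma)$ is surjective, so the map from marked del Pezzos to configurations is dominant. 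You instead note that over the (irreducible) configuration space, on the open locus where the $13$ points impose independent conditions the pencil and its base locus vary in a family, that smoothness of the fiber is open, and then you verify a single specialized instance with the points on a fixed smooth $S_0$, where the count forces $V_5$ to be the defining pencil of $S_0$ and hence $\Bl(P) = S_0$. This is valid, and it is pleasing that uniqueness and existence are handled by one and the same pencil construction, avoiding surface deformation theory; the price is that the openness assertion, which you state without proof, really does need a word --- the pencil must be seen to form a rank-$2$ subbundle of $H^0(\O_{\pp^4}(2)) \otimes \O_U$ over the good locus $U$, and fiberwise smoothness of the (proper) universal base locus is open via the relative smoothness locus --- and your parenthetical ``hence a (possibly singular) surface of degree $4$'' is not literally true for an arbitrary pencil (a fixed component could make the base locus $3$-dimensional), though this is harmless since you only work near the good point. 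Note also that the cohomological input is essentially the same in both proofs ($8$ general points on the elliptic quartic $E$ impose independent conditions on the degree-$8$ bundle $\O_E(2)$ --- which is what your ``$\O_E(8)$'' should read, and whose use requires surjectivity of $H^0(\O_H(2)) \to H^0(\O_E(2))$, i.e.\ $h^1(\I_{E/H}(2)) = 0$ for the complete intersection $E$ --- and the $q_j$ impose independent conditions on the residual space, your version being the neat observation that $V_{j-1}$ cannot sit inside the $2$-dimensional defining pencil of $S_0$); the paper's deformation-theoretic framing additionally yields directly that the full marked configuration on $S$ is general, which is in the spirit of the surrounding arguments, while yours is more elementary in that it only invokes semicontinuity in the family of configurations.
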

\begin{proof}
First, we show that there is a unique pencil of quadrics passing through $p_1, \ldots, p_8, q_1, \ldots, q_5$.  As there is a $14$-dimensional projective space of quadrics in $\pp^4$, this amounts to asserting that the $13$ points above impose independent conditions on sections of $\O_{\pp^4}(2)$.  Consider the surjective restriction map to $\pp^3$
\[H^0(\O_{\pp^4}(2)) \to H^0(\O_{\pp^3}(2)). \]
As the $8$ points $p_1, \ldots, p_8$ are general in $\pp^3$, they impose independent conditions on sections $H^0(\O_{\pp^3}(2))$ and thus sections of $\O_{\pp^4}(2)$.  The points $q_1, \ldots, q_5$ then impose independent conditions on sections vanishing at $p_1, \ldots, p_8$ as they are general points in $\pp^4$.  

Next, let $S$ be a (smooth) del Pezzo surface and let $p_1, \ldots, p_8$ be a general collection of points on $S \cap H =E$ a general hyperplane section.  Let $q_1, \ldots, q_5$ be $5$ other general points on $S$.  Denote by $\Gamma$ the $0$-dimensional subscheme of the points $p_1 + \cdots + p_8 + q_1 \cdots + q_5$.  We will show that the map from such marked del Pezzo surfaces to $H^8 \times ({\pp^4})^5$ is dominant, by deformation theory, by observing that $N_{S / \pp^4}$ has no higher cohomology and
that the restriction map
\[H^0(N_S) \to H^0(N_S|_\Gamma)\]
is surjective.
Indeed, $N_S \simeq \O_S(2) \oplus \O_S(2)$ is nonspecial as $\O_S(2)$ is ample and so has no higher cohomology by Kodaira vanishing.
To show the surjectivity, note that
the restriction map $H^0(\O_S(2)) \to H^0(\O_E(2))$ is surjective, again for example by Kodaira vanishing, as $E$ is in the class $\O_S(1)$.  Since the points $p_1, \ldots, p_{8}$ are general on $E$, they impose independent conditions on $H^0(\O_E(2))$; therefore they also impose independent conditions on sections of $\O_S(2)$.
The points $q_1, \ldots, q_5$, being general, impose independent conditions on sections of $\O_S(2)$
vanishing at $p_1, \ldots, p_{8}$.
\end{proof}

Now consider any curve $C$ of degree $9$ and genus $6$ in $\pp^4$ passing through $\Gamma$ (by Corollary~\ref{cor_twist} there exist such curves).  By Riemann-Roch, $h^0(\O_C(2)) = 18+1-6 = 13$, and so $C$ lies on at least a pencil of quadrics in $\pp^4$.  As the points of $\Gamma$ lie on a unique, and general, such pencil, $C$ must also lie on a unique such pencil, and so be contained in the base locus $S$, a del Pezzo surface of degree~$4$.  As the Picard group of $S$ is countable, $C$ is in one of countably many possible curve classes on $S$.  Therefore $C \cap H \subset E$ is in one of only countably many linear equivalence classes of degree $9$ divisors on $E$.  This necessarily contains $p_1 + \cdots + p_8$, and so the $9$th point of intersection is in one of only countably many classes on $E$.  In particular, it cannot be a general point on $E$.

\subsection{Degree 10, genus 7}

Let $D = p_1+ \cdots+ p_{9}$ be a collection of points on a general hyperplane section of a curve $C$ of degree $10$ and genus $7$ in $\pp^4$.  We will show that $N_C(-D)$ satisfies interpolation, therefore deducing that $C$ passes through $f(10,7,4) = 14$ points that are general subject to the constraint that $9$ of them lie in a hyperplane.
In this case we have
\[\chi(N_C(-D)) = 17 \equiv -1 \pmod{3}. \]
Therefore it suffices to show that for a general divisor $\Gamma$ of degree $6$, 
\[h^0(N_C(-D-\Gamma)) = 0. \]
As before writing $D = \O_C(1)(-p)$, with $p$ general on $C$, we have that $\O_C(-1)(p-D)$ is general.  Therefore this result again follows from Theorem~\ref{notwist}.

In \cite{quadrics} it was shown that the hyperplane section $H \cap C$ of a curve $C$ of degree $10$ and genus $7$ is a general collection of $10$ points on a quadric in $H \simeq \pp^3$.  We show here that there exists a curve of degree $10$ and genus $7$ through a collection of $f(10,7,4)+1 = 15$ points that are general subject to the condition that $10$ lie on a quadric in $H$. 

We begin in a similar way to our approach for $(9,6)$.  Fix general points $p_1, \ldots, p_{10}$ on a quadric $Q$ in $H \simeq \pp^3$, and $5$ more general points $q_1, \ldots, q_5$ in $\pp^4$.  

\begin{lem}\label{sm_quad}
There exists a unique smooth quadric through the $14$ points 
\[p_1, \ldots, p_9, q_1, \ldots, q_5,\]
that are general subject to the constraint that the $p_i$ lie in a hyperplane.  
(Equivalently through the $15$ points
$p_1, \ldots, p_{10}, q_1, \ldots, q_5$ that are general subject to the 
constraint that the $p_i$ lie in a quadric in a hyperplane.)
\end{lem}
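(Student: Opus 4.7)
The plan is to follow the same template as Lemma~\ref{sm_dP_2}: first establish uniqueness of the quadric through the 14 given points by counting conditions imposed on $H^0(\O_{\pp^4}(2))$, and then deduce smoothness from a deformation-theoretic dominance argument for the forgetful map from the space of pairs (smooth quadric, configuration) to the space of configurations.

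For uniqueness, I would use the restriction sequence
\[0 \to H^0(\O_{\pp^4}(1)) \to H^0(\O_{\pp^4}(2)) \to H^0(\O_H(2)) \to 0.\]
Since $h^0(\O_H(2)) = 10$ and $p_1, \ldots, p_9$ are general in $H \cong \pp^3$, they impose $9$ independent conditions on $H^0(\O_H(2))$, hence determine a unique quadric $Q$ in $H$. Any quadric in $\pp^4$ containing $p_1, \ldots, p_9$ must therefore restrict to a scalar multiple of $Q$ on $H$, so such quadrics form a $6$-dimensional subspace of $H^0(\O_{\pp^4}(2))$ (spanned by $H^0(\O_{\pp^4}(1)) \cdot L_H$, where $L_H$ is a defining equation for $H$, together with any single extension of $Q$). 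The base locus of this $6$-dimensional linear system is $Q \subsetneq \pp^4$, so $q_1, \ldots, q_5$ being general points of $\pp^4$ impose five additional independent conditions, yielding a unique quadric $Q_4$ through the $14$ points.

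For smoothness, I would produce a smooth quadric $Q_4^\circ$, a hyperplane $H^\circ$ for which $Q_4^\circ \cap H^\circ$ is a smooth quadric surface, and points $p_i^\circ \in Q_4^\circ \cap H^\circ$ and $q_j^\circ \in Q_4^\circ$ in general position, and then verify that the forgetful map to the configuration space is dominant at this point --- so that, combined with uniqueness, the unique quadric through a general configuration is smooth. By deformation theory applied to $N_{Q_4^\circ / \pp^4} \simeq \O_{Q_4^\circ}(2)$ (which is nonspecial by Kodaira vanishing, and has $h^0 = 14$ from the sequence $0 \to \O_{\pp^4} \to \O_{\pp^4}(2) \to \O_{Q_4^\circ}(2) \to 0$), dominance reduces to showing that the $14$ marked points impose independent conditions on $H^0(\O_{Q_4^\circ}(2))$.

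I would verify this in two stages. The restriction $H^0(\O_{Q_4^\circ}(2)) \to H^0(\O_{Q_4^\circ \cap H^\circ}(2))$ is surjective with kernel $H^0(\O_{Q_4^\circ}(1))$ of dimension $5$, matching the target dimension $9 = h^0(\O_{Q_4^\circ \cap H^\circ}(2))$. Nine general points on the very ample quadric surface $Q_4^\circ \cap H^\circ$ impose $9$ independent conditions on $H^0(\O_{Q_4^\circ \cap H^\circ}(2))$, hence on $H^0(\O_{Q_4^\circ}(2))$, leaving a $5$-dimensional subspace $W$. Since $W$ is nonzero and $Q_4^\circ$ is irreducible of dimension $3$, the base locus of $W$ has dimension at most $2$; the same remains true after imposing vanishing at any general point of $Q_4^\circ$, so by a standard induction five general points of $Q_4^\circ$ impose five further independent conditions on $W$. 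The main obstacle is really just this last bookkeeping step, where care must be taken to argue that the base loci remain proper as vanishing conditions at the $q_j^\circ$ are added.
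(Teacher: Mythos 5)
Your proposal is correct and follows essentially the same route as the paper: uniqueness by showing the $14$ points impose independent conditions on $H^0(\O_{\pp^4}(2))$ via restriction to the hyperplane, and smoothness by a deformation-theoretic dominance argument using $N_S \simeq \O_S(2)$ and independent conditions on $H^0(\O_S(2))$. The only cosmetic differences are that the paper identifies the residual space for the $q_j$ with $\ker(H^0(\O_S(2)) \to H^0(\O_{S\cap H}(2))) \simeq H^0(\O_S(1))$ instead of your base-locus induction, and proves $h^1(\O_S(2))=0$ via the restriction sequence from $\pp^4$ rather than Kodaira vanishing.
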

\begin{proof}
As there is a $14$-dimensional projective space of quadrics in $\pp^4$, it suffices to show that these $14$ points impose independent conditions on sections of $H^0(\O_{\pp^4}(2))$.  The points $p_1, \ldots, p_9$ impose independent conditions on $H^0(\O_{\pp^3}(2))$, being general on $\pp^3$.  Furthermore, restriction is a surjection
\[H^0(\O_{\pp^4}(2)) \to H^0(\O_{\pp^3}(2)) \]
so the points $p_1, \ldots, p_9$ impose at least as many conditions on $H^0(\O_{\pp^4}(2))$.  The general points $q_1, \ldots, q_5$ impose independent conditions on sections of $\O_{\pp^4}(2)$ vanishing at $p_1, \ldots, p_9$, and so the whole collection imposes independent conditions on quadrics.

Now we show that this quadric is general.  As above, let $S$ be a general quadric in $\pp^4$ and let $q_1, \ldots, q_5$ be general points on $S$ and $p_1, \ldots, p_9$ be general points on $H \cap S$.  Then it suffices by deformation theory to verify that $N_{S/ \pp^4}$ has no higher cohomology and that
\[H^0(N_S) \to H^0(N_S|_{p_1 \cup p_2 \cup \cdots \cup p_9 \cup q_1 \cup \cdots \cup q_5})\]
is surjective.
To show $N_S \simeq \O_S(2)$ has no higher cohomology,
we use the exact sequence of sheaves
\[0 \to \O_{\pp^4}(n - 2) \to \O_{\pp^4}(n) \to N_S(n) \simeq \O_S(n) \to 0,\]
and the vanishing of the higher cohomology of $\O_{\pp^4}(n)$ and $\O_{\pp^4}(n - 2)$ for $n \geq 1$, which shows that $\O_S(n)$ has no higher cohomology for
$n \geq 1$ (in particular for $n = 2$).

To show that these $14$ points
impose independent conditions on sections of $\O_S(2)$,
we first note that the restriction map $H^0(\O_S(2)) \to H^0(\O_{S \cap H}(2))$
is surjective (since $h^1(\O_S(1)) = 0$ by the above).
Since $p_1, p_2, \ldots, p_9$ are general on $S \cap H$,
and $h^0(\O_{S \cap H}(2)) = h^0(\O_{\pp^1 \times \pp^1}(2, 2)) = 9$,
they impose independent conditions on sections of $\O_{S \cap H}(2)$
and thus on sections of $\O_S(2)$.
Since $q_1, q_2, \ldots, q_5$ are general, they impose independent
conditions on
\[\ker(H^0(\O_S(2)) \to H^0(\O_{S \cap H}(2))) \simeq H^0(\O_S(1)).\]
Combining this, our $14$ points impose independent conditions on sections of $\O_S(2)$, as desired.
\end{proof}

We showed above that there exists a curve $C$ of degree $10$ and genus $7$ through $p_1, \ldots, p_9, q_1, \ldots, q_5$.  As $h^0(\O_C(2)) = 20+1-7 = 14$, the curve $C$ lies on a quadric.  As the $14$ points lie on a unique quadric $S$ (which is smooth if the points are general), $C$ lies on this quadric.

Finally we claim that the rank $2$ bundle $N_{C/S}(-1)$ satisfies interpolation -- this is sufficient to prove the desired result as the points $p_1, \ldots, p_{10}$ are general points in a hyperplane section of this quadric
and $q_1, \ldots, q_5$ are general points on this quadric.  From
\[0 \to N_{C/S}(-1) \to N_C(-1) \to \O_C(1) \to 0, \]
we have $\chi(N_{C/S}(-1)) = 10$.  It suffices to show that after twisting down by $5$ general points $q_1, \ldots, q_5$ on $C$, we have $h^0(N_{C/S}(-1)(-q_1- \cdots-q_5)) = 0$.  By the above exact sequence, it suffices to show that
\begin{equation}\label{10_7_vanishing} h^0(N_{C}(-1)(-q_1-\cdots-q_5)) = 0.\end{equation}

To show this, we will degenerate $C$ to the union of a curve $X$ of degree $6$ and genus $2$ and a degenerate rational normal curve $M \cup_{m_1+m_2+m_3} (L_1 \cup L_2 \cup L_3)$, where $L_i \cap M= m_i$ and $L_i \cap C = z_i + w_i$ is a $2$-secant line as in the setup of Section~\ref{quartic}.  Specialize the points $q_i$ so that $2$ lie on $M$ and one lies on each of the $L_i$.  By \cite[Theorem 1.6]{rbn} this is a BN-curve and as in Lemma~\ref{3_secant} (using Lemma~\ref{3-secant-twist}) we have that on global sections 
\[h^0(N_{C}(-1)(-q_1-\cdots-q_5)) = h^0(N_X(-1)[z_1 \to w_1][w_1 \to z_1][z_2 \to w_2][w_2 \to z_2][z_3 \to w_3][w_3 \to z_3]) .\]
Further degenerate $X$ as in Lemma~\ref{mod_on_secant} to the union of a curve $Y$ of degree $5$ and genus $1$ and a $2$-secant line $L$ meeting $Y$ at points $x$ and $y$.  The points $x_1$ and $x_2$ specialize onto $L$ and the remainder specialize onto $Y$.  The union is a BN-curve and the global sections glue to give that
\begin{align*}
h^0(N_Y(-1)&(y)[x \to y][2y \to x][w_1 \to x][w_2 \to x][z_3 \to w_3][w_3 \to z_3]) \\
& =h^0(N_X(-1)[z_1 \to w_1][w_1 \to z_1][z_2 \to w_2][w_2 \to z_2][z_3 \to w_3][w_3 \to z_3]) \\
& = h^0(N_{C}(-1)(-q_1-\cdots-q_5)).
\end{align*}
The bundle
\[N_Y' \colonequals N_Y(-1)(y)[x \to y][2y \to x][w_1 \to x][w_2 \to x][z_3 \to w_3][w_3 \to z_3]\]
sits in an exact sequence
\[0 \to N_{Y \to x}(-1)(y -x - z_3 - w_3) \to N_Y' \to N_{\pi_x|_Y}(-1)(-y-w_1-w_2)[x \to y][z_3 \to w_3][w_3 \to z_3] \to 0\]
$N_{Y \to x}(-1)(y -x - z_3 - w_3) \simeq \O_Y(x + y - z_3 - w_3)$ is a general line bundle of degree $0$ on an elliptic curve, and hence has no global sections.	
It thus remains to show
\[h^0(N_{\pi_x|_Y}(-1)(-y-w_1-w_2)[x \to y][z_3 \to w_3][w_3 \to z_3]) = 0.\]
Since $\chi(N_{\pi_x|_Y}(-1)(-y-w_1-w_2)[x \to y][z_3 \to w_3][w_3 \to z_3]) = -1$
and $\O_C(1)(y + w_1 + w_2)$ is a general line bundle of degree $2$ on an elliptic curve,
it suffices to show $N_{\pi_x|_Y}[x \to y][z_3 \to w_3][w_3 \to z_3]$ satisfies interpolation.
Limiting $y$ to $w_3$, it suffices to show interpolation for the bundle
\[N_{\pi_x|_Y}[x \to w_3][z_3 \to w_3][w_3 \to z_3]\]
This is verified by publicly available code in \cite[Appendix B]{joint}:

\begin{verbatim}
>>> good(Curve(4,1,3).add(P101).add(P100))
True
\end{verbatim}

\bibliographystyle{plain}
\bibliography{Interpolation}

\end{document}